\theoremstyle{plain}
\newtheorem{theorem}{Theorem}[section]
\newtheorem{thm}[theorem]{Theorem}
\newtheorem{lemma}[theorem]{Lemma}
\newtheorem{cor}[theorem]{Corollary}
\newtheorem{prop}[theorem]{Proposition}
\newtheorem*{thma}{Theorem A}
\theoremstyle{definition}
\newtheorem{defn}[theorem]{Definition}
\newtheorem{rmk}[theorem]{Remark}
\newtheorem{exam}[theorem]{Example}
 \newtheorem{subsec}[theorem]{}
\newtheorem{notation}[theorem]{Notation}
\numberwithin{table}{section}
\newcommand*{\myTagFormat}[2]{(\ref{#1})($#2$)}
\newenvironment{myeq}[1][]
{\stepcounter{theorem}\begin{equation}\tag{\thetheorem}{#1}}
{\end{equation}}
\newenvironment{mysubsection}[2][]
{\begin{subsec}\begin{upshape}\begin{bfseries}{#2.}
\end{bfseries}{#1}}
{\end{upshape}\end{subsec}}
\newcommand{\Mack}{\sf Mack}
\newcommand{\Burn}{\sf Burn}
\newcommand{\Ab}{\sf Ab}
\newcommand{\ov}[1]{\overline{#1}}
\newcommand{\BB}{\mathcal{B}}
\newcommand{\Z}{\mathbb{Z}}
\newcommand{\Q}{\mathbb{Q}}
\newcommand{\uM}{\underline{M}}
\newcommand{\uN}{\underline{N}}
\newcommand{\uT}{\underline{T}}
\newcommand{\uA}{\underline{A}}
\newcommand{\uC}{\underline{C}}
\newcommand{\uR}{\underline{R}}
\newcommand{\uO}{\underline{0}}
\newcommand{\uZ}{\underline{\mathbb{Z}}}
\newcommand{\uZp}{\underline{\mathbb{Z}/p}}
\newcommand{\uB}{\underline{B}}
\newcommand{\Id}{\mathrm{Id}}
\newcommand{\I}{\operatorname{I}}
\newcommand{\Mod}{\mathrm{Mod}}
\newcommand{\Ext}{\operatorname{Ext}}
\newcommand{\op}{\operatorname{op}}
\newcommand{\colim}{\operatorname{colim}}
\newcommand{\im}{\operatorname{im}}
\newcommand{\Mor}{\operatorname{Mor}}
\newcommand{\res}{\operatorname{res}}
\newcommand{\tr}{\operatorname{tr}}
\newcommand{\Fun}{\operatorname{Fun}}
\newcommand{\Hom}{\operatorname{Hom}}
\newcommand{\uH}{\underline{H}}
\newcommand{\upi}{\underline{\pi}}
\newcommand{\uE}{\underline{E}}
\def\thickhline{%
  \noalign{\ifnum0=`}\fi\hrule \@height \thickarrayrulewidth \futurelet
   \reserved@a\@xthickhline}
\def\@xthickhline{\ifx\reserved@a\thickhline
               \vskip\doublerulesep
               \vskip-\thickarrayrulewidth
             \fi
      \ifnum0=`{\fi}}
\newlength{\Oldarrayrulewidth}
\newcommand{\Cline}[2]{%
  \noalign{\global\setlength{\Oldarrayrulewidth}{\arrayrulewidth}}%
  \noalign{\global\setlength{\arrayrulewidth}{#1}}\cline{#2}%
  \noalign{\global\setlength{\arrayrulewidth}{\Oldarrayrulewidth}}}
\newlength{\thickarrayrulewidth}
\author{Samik Basu, Surojit Ghosh}
\email{samik.basu2@gmail.com; samikbasu@isical.ac.in}
\address{Stat-Math Unit,
Indian Statistical Institute,
B. T. Road, Kolkata-700108, INDIA.}
\email{surojitghosh89@gmail.com; surojit.ghosh@ma.iitr.ac.in}
\address{Department of Mathematics,
Indian Institute of Technology,
Roorkee-247667, INDIA} 
\subjclass[2010]{Primary: 55N91, 55P91; Secondary: 57S17, 14M15.}
\keywords{Bredon cohomology, Mackey functor, equivariant cohomology.}
\begin{document}

\title[Non-trivial extensions in equivariant cohomology]{Non-trivial extensions in equivariant cohomology with constant coefficients}
\maketitle

\begin{abstract}
In this paper, we prove some computational results about equivariant cohomology over the cyclic group $C_{p^n}$ of prime power order. We show that there is an inductive formula when the dimension of the $C_p$-fixed points of the grading is large. Among other calculations, we also show the existence of non-trivial extensions when $n\geq 3$. 
\end{abstract}

\section{Introduction}
The equivariant stable homotopy category has a rich structure coming from the desuspension along representation spheres. This equips equivariant cohomology groups with a grading over the virtual representations $RO(G)$. The resulting structure is usually difficult to compute even in the case of ordinary cohomology. Their computations give interesting results and also have surprising consequences. 

The coefficients of ordinary cohomology are Mackey functors, and the important ones are the Burnside ring Mackey functor $\uA$, and the constant Mackey functor $\uZ$. For the group $C_p$, the $RO(C_p)$-graded commutative rings $\upi_{-\bigstar}^{C_p} H\uA \cong \uH^\bigstar_{C_p}(S^0;\uA)$ was computed by Lewis \cite{Lew88}, and analogous results for $\uZ$, $\uZp$ were computed by Stong and Lewis. There are computations for a few other groups (see \cite{BG19}, \cite{BG20}, \cite{Zen17} \cite{KL20}), however, for most ones, even among Abelian groups, very little is known. 

In this paper, we study the Mackey functors $\upi_{-\alpha}^{C_{p^n}} H\uZ \cong \uH^\alpha_{C_{p^n}}(S^0;\uZ)$ for $\alpha \in RO(C_{p^n})$, which form the additive structure for equivariant cohomology over the group $C_{p^n}$. We prove the following result. 
\begin{thma}
If $|\alpha^{C_p}|\leq -2n+2$ or $|\alpha^{C_p}|\geq 2n$, the Mackey functor $\uH^\alpha_{C_{p^n}}(S^0;\uZ)$ can be computed directly from the Mackey functor $\uH^{\alpha^{C_p}}_{C_{p^{n-1}}}(S^0;\uZ)$. (see Table \ref{comp-highfix})
\end{thma}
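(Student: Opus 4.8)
The plan is to argue by induction on $n$. The base case $n=1$ is Lewis's computation of $\upi^{C_p}_{-\bigstar}H\uZ$: the excluded value $|\alpha^{C_p}|=1$ is exactly the one line along which the groups are not dictated by $H^{|\alpha^{C_p}|}(S^0;\Z)$, so outside it the assertion is a transcription of his answer. For the inductive step the comparison is along the quotient $q\colon C_{p^n}\twoheadrightarrow C_{p^n}/C_p\cong C_{p^{n-1}}$. The engine is the isotropy separation cofiber sequence for the family $\mathcal{F}$ of subgroups of $C_{p^n}$ not containing $C_p$; since the subgroups of $C_{p^n}$ form a chain, $\mathcal{F}=\{e\}$ and this reads
\[
(EC_{p^n})_+\wedge H\uZ\;\longrightarrow\;H\uZ\;\longrightarrow\;\widetilde{E}\mathcal{F}\wedge H\uZ .
\]
Smashing with $S^{-\alpha}$ and applying $\upi^{C_{p^n}}_{\ast}$ produces a long exact sequence of $C_{p^n}$-Mackey functors whose two outer terms I will identify.

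The geometric term is governed by $C_{p^{n-1}}$: one has $\widetilde{E}\mathcal{F}\wedge H\uZ\simeq\widetilde{E}\mathcal{F}\wedge\mathrm{infl}_q\,\Phi^{C_p}H\uZ$, so for every virtual representation $\beta$ the Mackey functor $\upi^{C_{p^n}}_{-\beta}(\widetilde{E}\mathcal{F}\wedge H\uZ)$ is the inflation along $q$ of $\upi^{C_{p^{n-1}}}_{-\beta^{C_p}}(\Phi^{C_p}H\uZ)$, carrying the value $0$ at the free orbit. Writing $\alpha=\alpha^{C_p}\oplus\overline{\alpha}$ with $\overline{\alpha}$ the sum of the faithful two-dimensional summands of $\alpha$, the Euler class $a_{\overline{\alpha}}$ becomes invertible after smashing with $\widetilde{E}\mathcal{F}$ (since $\overline{\alpha}^{H}=0$ for all $H\neq e$), so inside this term $\alpha$ may be replaced by $\alpha^{C_p}$. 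Thus the geometric term is the inflation of $\upi^{C_{p^{n-1}}}_{-\alpha^{C_p}}(\Phi^{C_p}H\uZ)$, and the problem is reduced to (a) computing $\Phi^{C_p}H\uZ$ as a $C_{p^{n-1}}$-spectrum, and (b) showing the remaining ``Borel'' term is negligible in the stated range.

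For (a): writing $\Phi^{C_p}H\uZ=\colim_k\bigl(S^{-k\lambda}\wedge H\uZ\bigr)$ for a faithful rotation representation $\lambda$ of $C_p$ and running the $\lambda$-cellular chain complex, one finds that $\Phi^{C_p}H\uZ$ is connective with $\upi_0(\Phi^{C_p}H\uZ)\cong\uZ$, and that the fibre of the truncation $\Phi^{C_p}H\uZ\to H\uZ$ has homotopy Mackey functors that are finite $p$-groups concentrated in a bounded band of positive even degrees; the width of this band is precisely what turns into the numbers $2n$ and $-2n+2$. Since this fibre is connective, for $|\alpha^{C_p}|\geq 2n$ we are evaluating its homotopy in a degree negative enough that it and its neighbour in degree $-1$ vanish, so the geometric term is $\mathrm{infl}_q\,\uH^{\alpha^{C_p}}_{C_{p^{n-1}}}(S^0;\uZ)$ up to the value at the free orbit; the opposite range $|\alpha^{C_p}|\leq-2n+2$ is obtained either by Anderson duality — which interchanges the two regions — or by running the complementary cofiber sequence built from $\widetilde{E}\mathcal{P}$, $\mathcal{P}$ the family of proper subgroups. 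For (b): since $(EC_{p^n})_+\wedge S^{-\alpha}\wedge H\uZ$ is a free spectrum, a Thom isomorphism identifies $\upi^{C_{p^n}}_{-\alpha}\bigl((EC_{p^n})_+\wedge H\uZ\bigr)$ at $C_{p^n}/C_{p^k}$ with $H_{-|\alpha|}(BC_{p^k};\Z)$, which vanishes once $|\alpha|>0$; one reduces the case $|\alpha|\leq 0$ to $|\alpha|>0$ by multiplying with the orientation classes $u_{\lambda_i}$ of the faithful summands, which act invertibly on $\upi^{C_{p^n}}_{-\bigstar}H\uZ$ throughout the band cut out by the hypothesis on $|\alpha^{C_p}|$. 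Assembling the long exact sequence then expresses $\uH^\alpha_{C_{p^n}}(S^0;\uZ)$ in terms of $\uH^{\alpha^{C_p}}_{C_{p^{n-1}}}(S^0;\uZ)$, as tabulated.

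The main difficulty is twofold. Computationally, one must determine the homotopy Mackey functors of $\Phi^{C_p}H\uZ$ over $C_{p^{n-1}}$ — not only their underlying groups — and in particular the exact range of degrees they occupy, since that is what forces the bounds; this is itself a small but non-trivial $RO(C_{p^n})$-graded computation. Structurally, one must recover the full Mackey functor, with its restrictions and transfers, from the long exact sequence rather than just the isomorphism classes of the groups appearing; this forces one to show that the connecting homomorphisms vanish, or that the pertinent extensions of Mackey functors split, in each of the two good ranges — which is the second, and more delicate, place where the hypothesis $|\alpha^{C_p}|\leq-2n+2$ or $|\alpha^{C_p}|\geq 2n$ enters.
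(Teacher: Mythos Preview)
Your isotropy--separation plan is a natural alternative to the paper's method, but it rests on a claim about $\Phi^{C_p}H\uZ$ that is false, and this breaks the argument.

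You assert that $\Phi^{C_p}H\uZ$ is connective with $\upi_0\cong\uZ$, and that the fibre of a truncation $\Phi^{C_p}H\uZ\to H\uZ$ lives in a bounded band whose width produces the bounds $2n$ and $-2n+2$. But a direct computation from $\widetilde{E}\mathcal{F}=\colim_j S^{j\lambda_0}$ gives, at the level $C_{p^{n-1}}/C_{p^{k-1}}$,
\[
\upi_0(\Phi^{C_p}H\uZ)(C_{p^{n-1}}/C_{p^{k-1}})
\;=\;\colim_j\,\uH^{j\lambda_0}_{C_{p^n}}(S^0)(C_{p^n}/C_{p^k})
\;=\;\uB_{\ov{n},\emptyset}(C_{p^n}/C_{p^k})\;=\;\Z/p^{k},
\]
so $\upi_0(\Phi^{C_p}H\uZ)$ is torsion, not $\uZ$, and there is no truncation map to $H\uZ$ of the kind you invoke. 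In fact already for $n=1$ one has $\pi_{2k}(\Phi^{C_p}H\uZ)\cong\Z/p$ for all $k\ge 0$, so there is no bounded band at all. The upshot is that the geometric term $\upi^{C_{p^{n-1}}}_{-\alpha^{C_p}}(\Phi^{C_p}H\uZ)$ is \emph{not} $\uH^{\alpha^{C_p}}_{C_{p^{n-1}}}(S^0;\uZ)$: for $|\alpha|>0$ even the Borel term genuinely vanishes, yet Table~\ref{comp-highfix} records an extra $\uB_{\ov{n},\emptyset}$ summand, which must therefore come from the geometric side. Computing the $RO(C_{p^{n-1}})$-graded homotopy of $\Phi^{C_p}H\uZ$ is thus not a small auxiliary calculation but essentially equivalent to the problem you are trying to solve.

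There is a second gap in your treatment of the Borel piece. The class $u_{\lambda_0}$ has degree $\lambda_0-2$ with $|\lambda_0-2|=0$, so multiplication by $u_{\lambda_0}$ never changes $|\alpha|$; you cannot ``reduce the case $|\alpha|\le 0$ to $|\alpha|>0$'' this way on $\upi^{C_{p^n}}_{-\bigstar}H\uZ$, and on the Borel term alone it only shows that this term depends on $|\alpha|$ --- it does not make it vanish. The summands $\uZ$, $\uZ^\ast$, $\uB_{\ov{n},\emptyset}$ in Table~\ref{comp-highfix} are exactly what survives from the Borel side when $|\alpha|\le 0$. The paper does not try to kill these; instead it works one $a_{\lambda_0}$ (or $a_{\lambda_1}$) at a time via the cofibre sequence $S(\lambda_m)_+\to S^0\to S^{\lambda_m}$, proves the explicit inductive computation of Proposition~\ref{comp_1} (this is where the number $n$ enters, through the length of the subgroup lattice rather than any band width), and then resolves the resulting extension in Theorem~\ref{negfree} by producing a \emph{multiplicative} section of $\pi^\ast\colon\uH^{\alpha}_{C_{p^n}}(S^0)\to\uH^{\alpha}_{C_{p^n}}(S(\lambda_0)_+)$. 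Anderson duality then transports the answer to the regime $|\alpha^{C_p}|\ge 2n$. Your proposal acknowledges that splitting the extension is delicate but offers no mechanism to do it; in the paper this is precisely where the ring structure is used.
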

We also point out various computations of these Mackey functors not covered by the above theorem. The formulas that appear here are mostly written as a direct sum of those of the form $\uZ_T$ and $\uB_{T,S}$, a notation inspired from \cite{HHR17}. A consequence of these results are the complete calculation of the additive structure for the group $C_{p^2}$ (see Table \ref{comp-tab}). A new feature of these groups starting from $n\geq 2$ is the existence of $a_\lambda$-periodic classes. For the group $C_p$, the classes were either part of a polynomial algebra or were $a_\lambda$-torsion. 

In these computations, we also point out a non-trivial extension of Mackey functors. These extensions first occur in the case $n=3$, and hence also for higher $n$. In cohomology over the Burnside ring $\uA$, one has extensions $\uA[d]$ for the group $C_p$ of the form $0\to \langle \Z \rangle \to \uA[d] \to \uZ\to 0$, that occur in the additive structure. On the other hand, over $\uZ$ coefficients, the Mackey functors occuring in the additive structure over $C_p$ are a direct sum of those of the form $\uZ_T$ and $\uB_{T,S}$. For the group $C_{p^n}$, many special cases have been shown to be of this type (see for example \cite[Theorem 5.7]{HHR17}). However, we point out through examples that this does not happen in general, and in these cases, the $a_{\lambda_i}$-multiplication gives non-trivial extensions.   

\begin{mysubsection}{Organization}
In Section \ref{eqcoh}, we recall some preliminaries on equivariant cohomology, and their computational methods. In Section \ref{Z-mod}, we discuss the category of $\uZ$-modules, and their extensions, constructing important examples used in later sections. In Section \ref{largedimcomp}, we compute the equivariant cohomology at large $C_p$-fixed points.  In Section \ref{nontriv}, we compute the equivariant cohomology over $C_{p^2}$ and use them to point out the non-trivial extensions. 
\end{mysubsection}

\begin{notation}
Throughout this paper, $G$ denotes the cyclic group $C_{p^n}$ of order $p^n$, where $m$ is odd, and $g$ denotes a fixed generator of $G$. For an orthogonal $G$-representation $V$, $S(V)$ denotes the unit sphere, $D(V)$ the unit disk, and $S^V$ the one-point compactification $\cong D(V)/S(V)$. 
\end{notation}	

\section{Equivariant cohomology}\label{eqcoh}
Ordinary cohomology theories are defined for Abelian groups, and these are represented by spectra with homotopy concentrated in degree $0$. In the equivariant world, the analogous role is played by \emph{Mackey functors}. In this section we briefly recall their definition, and relate them to equivariant cohomology (see \cite{May96} for details). 

The \emph{Burnside category} $\Burn_G$ is the category with objects finite $G$-sets
 and each morphism set $\Mor_{\Burn_G}(S,T)$ is the group completion of the hom-set of spans between $S$ and $T$ in the category of finite $G$-sets. It is a fact that $\Burn_G$ is self dual, that is, the duality map $D\colon  \Burn_G \to \Burn_G^{\op}$ that is identity on objects and switches the legs of the spans, is an isomorphism of categories.
 
 \begin{defn}
A functor $\uM: \Burn_G^{op} \to \Ab$ from Burnside category into Abelian groups is called \emph{Mackey functor}.
\end{defn}

In this paper we restrict our attention to $G=C_{p^n}$. For the remainder of the paper $G$ will always refer to this group. 
Explicitly, a $G$-Mackey functor\footnote{This is a simplification in the case $G$ is Abelian. Otherwise the double coset formula (4) has a slightly more complicated expression.} $\uM$ is a collection of commutative $W_G(H)$-groups $\uM(G/H)$ one for each subgroup $H \le G$, each accompanied by \emph{transfer} $\tr^H_K\colon  \uM(G/K) \to \uM(G/H)$ and \emph{restriction} $\res^H_K \colon\uM(G/H) \to \uM(G/K)$ for $K\le H\le G$ such that
\begin{enumerate}
\item $\tr^H_J = \tr^H_J\tr^K_J$ and $\res^H_J = \res^K_J \res^H_K$ for all $J \le K \le H.$
\item  $\tr^H_K(\gamma.x)= \tr^H_K(x)$ for all $x \in \uM(G/K)$ and $\gamma \in W_H(K).$
\item $\gamma. \res^H_K(x) =\res^H_K(x)$ for all $x \in \uM(G/H)$ and $\gamma \in W_H(K).$
\item $\res^H_K\tr ^J_K(x)= \sum_{\gamma \in W_H(K)} \gamma.\tr^{K}_{J\cap K}(x)$
for all subgroups $J,H \leq K.$
\end{enumerate}
We will often write $\uM(H)$ for $\uM(G/H)$.

A morphism between two Mackey functors is given by natural transformations. We denote the category of Mackey functors of the group $G$, by $\Mack_G.$ It is a fact that $\Mack_G$ is an Abelian category. The Burnside ring Mackey functor is representable functor $\Burn_G(-, G/G)$. This is denoted by $\uA$. For an Abelian group $C$, the constant Mackey functor $\uC$ is described as $\uC(G/H)=C$ with the $\res^H_K=\Id$, and $\tr^H_K=$ multiplication by $[H:K]$. Following Lewis \cite{Lew88},  the data of a Mackey functor for the group $C_p$ may be organized in a diagram as demonstrated below. 
$$\xymatrix@R=0.1cm{    & \Z \oplus \Z \ar@/_.5pc/[dd]_{[\begin{smallmatrix} 1 & p \end{smallmatrix}]}   && & & \Z  \ar@/_.5pc/[dd]_{\Id}    \\  
  \uA :      &                                                                                                                              && &\uZ :   \\ 
  & \Z \ar@/_.5pc/[uu]_{\left[ \substack{0 \\ 1}\right]}  &&& & \Z \ar@/_.5pc/[uu]_{p}}$$
The top row gives the value of the Mackey functor at $C_p/C_p$, and the bottom row gives the Mackey functor at $C_p/e$. For the groups $C_{p^n}$, there are analogous diagrams arranged vertically with $n+1$ different levels.

\begin{exam}
For a $G$-spectrum $X$, the equivariant homotopy groups forms a Mackey functor $\upi_n(X)$, defined by the formula 
\[ \upi_n(X)(G/H):= \pi_n(X^H).
\]
\end{exam}

For a Mackey functor $\uM$ one may define an Eilenberg-MacLane spectrum \cite{GM95} $H\uM$ such that $\upi_n(H\uM)$ is concentrated in degree $0$ where it is the Mackey functor $\uM$. This constructs a $RO(G)$-graded cohomology theory by the formula 
\[
H^\alpha_G(X;\uM) \cong \mbox{ Ho-G-spectra } (X, \Sigma^\alpha H\uM).
\]
Recall that $RO(G)$ denotes the group completion of the monoid of irreducible representations of $G.$ A general element $\alpha \in RO(G)$ can be represented as a formal difference $\alpha = V - W$ for $G$-representations $V, W.$ For a unitary representation $V$ of $G$, we denote by $S^V$ the one-point compactification of $V$. Analogously for a virtual representation $\alpha= V-W$, $S^\alpha$ denotes the $G$-spectrum $\Sigma^{-W} S^V$. Using the functor $G/H\mapsto X\wedge G/H_+$, the cohomology groups are part of a Mackey functor denoted by $\uH_G^\alpha(S^0;\uM)$. 

One may put a symmetric monoidal structure on the category $\Mack_G$ by using the \emph{box product}. For two Mackey functors $\uM$ and $\uN \in \Mack_G$, the box product $\uM \square \uN$ is the left Kan extension of tensor product of Abelian groups along the functor $\times : \Burn_G^{\op} \times \Burn_G^{\op}\to \Burn_G^{\op}$ given by $(S, T) \mapsto S\times T.$ The Burnside ring Mackey functor $\uA$ plays the role of unit object in the symmetric monoidal structure of $\Mack_G.$

\begin{defn}
A ({\it commutative}) \emph{Green functor} for $G$,  is a (commutative) monoid in the symmetric monoidal category $\Mack_G$ defined as above. 
\end{defn}

Both $\uA$ and $\uZ$ are examples of commutative Green functors. Given a commutative Green functor $R$, an \emph{$\uR$-module} is a Mackey functor $\uM$ equipped with $\mu_{\uM} \colon \uR\square \uM\to \uM$ satisfying the usual relations.
 The category of $\uR$-modules will be denoted by $\uR$-$\Mod_G$. This in turn has the structure of a symmetric monoidal category with the induced box product. 
For a commutative Green functor $R$, the corresponding equivariant cohomology has a graded commutative ring structure. 

\begin{notation}
 The representation ring  $RO(C_{p^n})$ is generated by the trivial representation $1$, and the $2$-dimension representation $\lambda (k)$ given by the rotation by the angle $\frac{2 \pi k}{p^n}$ for $k=1, \cdots, \frac{p^n-1}{2}$. Denote the representation $\lambda (p^m)$ by $\lambda_m.$  Write $RO_0(C_{p^n}) \subset RO(C_{p^n})$ of those $\alpha$ such that the dimension of $\alpha$ is zero.
\end{notation}

We now describe some equivariant cohomology classes in $\uH^\bigstar_G(S^0;\uZ)(G/G) \cong \pi_{-\bigstar}^G(H\uZ)$. The generators used are defined  in \cite [Section 3]{HHR16} which we now recall. 
\begin{defn}\label{uagen}
Let $V$ be a $G$-representation. We have the $G$-map $S^0 \to S^V$\footnote{This is given by the inclusion of $\{0,\infty\}\subseteq S^V$.} which induces 
$$S^0 \to S^V\wedge S^0 \to S^V \wedge H\uZ$$ 
which we call $a_V\in \uH_G^V(S^0;\uZ)$. If $V$ is an oriented $G$-representation, a choice of orientation gives a class
$$u_V\in H_G^{V-\dim(V)}(S^0;\uZ) \cong \Z.$$
\end{defn}
We also have relations among these generators namely $u_V u_W = u_{V\oplus W}$ and $a_V a_W = a_{V\oplus W}$. It follows that these classes are products of $u_{\lambda(m)}$ and $a_{\lambda(m)}$ for $0\leq m <n$. These classes satisfy relations 
\begin{myeq}\label{a-reln}
p^{n-i} a_{\lambda_i}=0,
\end{myeq}  
and
\begin{myeq}\label{au-reln}
u_{\lambda_i} a_{\lambda_j} = p^{j-i} u_{\lambda_j} a_{\lambda_i} \mbox{   if } i < j.
\end{myeq}

\begin{defn}\label{ratio-classes}
Observe that the map $a:S^{\lambda_i} \to S^{\lambda_j}$ for $i<j<n$, described as the map on one-point compactifications induced by $z\mapsto z^{p^{j-i}}$, satisfies $a\circ a_{\lambda_i}  = a_{\lambda_j}$. This class is denoted by $a_{\lambda_j/\lambda_i}$. In the case $j=n$, this construction also makes sense and gives a class which after multiplication with $u_{\lambda_i}$ gives $p^{n-i}$, so it is denoted $[p^{n-i}u_{\lambda_i}^{-1}]$. 
\end{defn}

In this notation, for $G=C_p$, the equivariant cohomology of $S^0$ in gradings $n+m\lambda_0$ is given by (see \cite{Lew88}, \cite{Zen17}) 
\[
\uH^{\cdot + \cdot \lambda_0}_{C_p}(S^0;\uZ)(C_p/C_p) \cong \Z[u_{\lambda_0},a_{\lambda_0}] \oplus_{j\geq 1} \Z\{[pu_{\lambda_0}^{-j}]\} \oplus_{j,k\geq 1} \Sigma^{-1} \Z/p\{u_{\lambda_0}^{-j} a_{\lambda_0}^{-k}\}. 
\]
For $G=C_{p^n}$, the ring $\uH^{\bigstar}_{C_{p^n}}(S^0;\uZ)(C_{p^n}/C_{p^n})$ in gradings which are combinations of integers and positive multiples of $\lambda_i$ is described as \cite[Remark 4.6]{HHR17}
\[ \Z[a_{\lambda_0},u_{\lambda_0}, \cdots, a_{\lambda_{n-1}}, u_{\lambda_{n-1}}]/(p^{n-i}a_{\lambda_i}, u_{\lambda_i}a_{\lambda_{i+k}}-p^k u_{\lambda_{i+k}}a_{\lambda_i}, i,k\geq 0).
\] 

\vspace{0.2cm}

\begin{notation} Recall \cite{Web00} for $H\le G$, there is the \emph{restriction} functor 
\[\downarrow^G_H : \Mack_G \to \Mack_H\] given by $\downarrow^G_H(\uN)(H/L): = \uN(G\times_H H/L)$ where $\uN \in \Mack_G$ and $L \le H.$  Given a Mackey functor $\uM$, one defines $\Hom_L(\uM, \Z)$ as the composition of the functors 
\[ 
\xymatrix{\Burn_G^{\op} \ar[r]^D & \Burn_G^{\op} \ar[r]^{\uM} & \Ab \ar[rr]^{\Hom_{\Z}(-, \Z)} && \Ab}
\]
and similarly $\Ext_L(\uM, \Z)$ as the composition of the functors
\[
\xymatrix{\Burn_G^{\op} \ar[r]^D & \Burn_G^{\op} \ar[r]^{\uM} & \Ab \ar[rr]^{\Ext^1_{\Z}(-, \Z)} && \Ab}.
\]
We often denote $\Ext_L(\uM, \Z)$ by $\uM^E.$ The Mackey functor $\Hom_L(\uZ,\Z)$ has the same groups as $\uZ$ with the restriction and transfer maps switched and is denoted by $\uZ^\ast$. 
\end{notation}
 These Mackey functors play crucial role in the equivariant analog of the universal coefficient theorem discussed below along the lines of \cite{And69}.

\begin{mysubsection}{Anderson duality}
Let $\I_{\Q}$ and $\I_{\Q/\Z}$ be the spectra representing the cohomology theories given by $X \mapsto \Hom(\pi_{-\ast}^G(X), \Q)$ and $X \mapsto \Hom(\pi_{-\ast}^G(X), \Q/\Z)$ respectively. The natural map $\Q \to \Q/\Z$ induces the spectrum map $\I_{\Q} \to \I_{\Q/\Z}$, and the homotopy fibre is denoted by $\I_{\Z}$. For a $G$-spectrum $X$, the \emph{Anderson dual} $\I_{\Z}X$ of $X$, is the function spectrum $\Fun(X, \I_{\Z})$. For $X=H\uZ$, one easily computes $\I_{\Z}H\uZ \cong \Sigma^{2-\lambda_0}H\uZ.$

In general, for $G$-spectra $E$, $X$, and $\alpha \in RO(G),$ there is short exact sequence  
\begin{myeq}\label{end_dual}
0 \to \Ext_L(\uE_{\alpha -1}(X), \Z) \to \I_{\Z}(E)^{\alpha}(X) \to \Hom_L(\uE_{\alpha}(X), \Z)\to 0.
\end{myeq}
 In particular, for $E=H\Z$ and $X=S^0,$ we have the equivalence $\uE_\alpha(X) \cong \uH_G^{-\alpha}(S^0; \uZ)$. Therefore, one may rewrite \eqref{end_dual} as
 \begin{myeq}\label{and_comp}
 0 \to  \Ext_L(\uH^{3-\lambda_0 -\alpha}_{G}(S^0; \uZ), \Z) \to \uH^{\alpha}_{G}(S^0; \uZ) \to \Hom_L(\uH^{2-\lambda_0-\alpha}_{G}(S^0; \uZ), \Z) \to 0
 \end{myeq}
 for each $\alpha \in RO(G).$ 
\end{mysubsection}

Anderson duality provides a relation in the equivariant cohomology ring of $S^0$. A naive method to give another such relation is to build up $S^V$ using a filtration such that the filtration quotients are computable, and then use this to relate $\uH^\alpha_G(S^0)$ to $\uH^{\alpha - V}_G(S^0)$. This method is commonly used (see for example \cite{Lew88}, \cite{BG19}, \cite{BG20}). More explicitly, for each $m \le n$, we have homotopy cofibration sequences in $C_{p^n}$-spectra,
\begin{myeq}\label{rep_sphere}
{C_{p^n}/C_{p^m}}_+ \stackrel{1-g}{\to} {C_{p^n}/C_{p^m}}_+  \to S(\lambda_m)_+
\end{myeq}
and 
\begin{myeq}\label{sphere}
S(\lambda_m)_+ \to S^0 \to S^{\lambda_m}.
\end{myeq}
Here $g$ is a chosen generator for the quotient group ${C_{p^n}/C_{p^m}}.$

For a non-negative integer $0 \le k \le n,$ and a Mackey functor $\uM \in \Mack_{C_{p^k}}$ define 
$
\Theta_k : \Mack_{C_{p^k}} \to \Mack_{C_{p^n}}
$
as
\[\Theta_k(\uM)(G/H)= \begin{cases} \uZ(G/H)\otimes_\Z\uM(C_{p^k}/C_{p^k}) & \mbox{ if } C_{p^k} \subseteq H \\ \uM(C_{p^k}/H) & \mbox{ otherwise.}\end{cases} \]
The restrictions and transfers are clear from this description. In similar fashion, we define another functor $\Theta^\ast_k: \Mack_{C_{p^k}} \to \Mack_{C_{p^n}}$ as
\[\Theta^\ast_k(\uM)(G/H)= \begin{cases} \uZ^\ast(G/H)\otimes_\Z\uM(C_{p^k}/C_{p^k}) & \mbox{ if } C_{p^k} \subseteq H \\ \uM(C_{p^k}/H) & \mbox{ otherwise.}\end{cases} \]

\begin{prop}\label{sph_coh}
For each non-negative integer $m \le n$, $\uM \in \Mack_{C_{p^n}},$ and $\alpha \in RO(C_{p^n})$, there is short exact sequence
\[ \uO \to \Theta^\ast_m(\uH^{\alpha-1}_{C_{p^m}}(S^0; \downarrow^{p^n}_{{p^m}}\uM)) \to \uH^\alpha_{C_{p^n}}(S(\lambda_m)_+; \uM) \to \Theta_m(\uH^\alpha_{C_{p^m}}(S^0; \downarrow^{p^n}_{{p^m}}\uM))\to \uO\]
in $\Mack_{C_{p^n}}.$
\end{prop}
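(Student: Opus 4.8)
The plan is to compute $\uH^\alpha_{C_{p^n}}(S(\lambda_m)_+;\uM)$ directly from the cofibration sequence \eqref{rep_sphere}, which presents $S(\lambda_m)_+$ as the cofibre of $(1-g)\colon {C_{p^n}/C_{p^m}}_+ \to {C_{p^n}/C_{p^m}}_+$. Applying the cohomology theory $\uH^{\bigstar}_{C_{p^n}}(-;\uM)$ to this cofibration yields a long exact sequence of Mackey functors, and since ${C_{p^n}/C_{p^m}}_+$ is an induced spectrum, the Wirthm\"uller isomorphism (or the adjunction between induction and restriction along $C_{p^m}\le C_{p^n}$) identifies $\uH^\alpha_{C_{p^n}}({C_{p^n}/C_{p^m}}_+;\uM)$ with $\uH^{\alpha|_{C_{p^m}}}_{C_{p^m}}(S^0;\downarrow^{p^n}_{p^m}\uM)$, but packaged as a Mackey functor for $C_{p^n}$. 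The first task is therefore to make this identification functorial and to recognize the resulting $C_{p^n}$-Mackey functor: at an orbit $G/H$ with $C_{p^m}\subseteq H$ one gets copies of the underlying group indexed by cosets, which is exactly the effect of tensoring with $\uZ(G/H)$ (resp. $\uZ^\ast(G/H)$) in the definitions of $\Theta_m$ and $\Theta^\ast_m$; at orbits $G/H$ with $H\subsetneq C_{p^m}$ one simply gets $\uM(C_{p^m}/H)$. I would verify this level by level, checking that restrictions and transfers match those declared for $\Theta_m$, $\Theta^\ast_m$.

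Next I would analyze the connecting map. Under the identification above, the long exact sequence breaks into short exact sequences
\[
0 \to \coker\big((1-g)^\ast\big)\ \text{in degree }\alpha \to \uH^\alpha_{C_{p^n}}(S(\lambda_m)_+;\uM) \to \ker\big((1-g)^\ast\big)\ \text{in degree }\alpha \to 0,
\]
where $(1-g)^\ast$ acts on $\uH^{\alpha}_{C_{p^m}}(S^0;\downarrow\uM)$ through the residual $C_{p^n}/C_{p^m}$-action. The key point is that $g$ acts trivially on $C_{p^m}$-equivariant cohomology with coefficients in a restricted Mackey functor (the $C_{p^n}/C_{p^m}$-action on $\downarrow^{p^n}_{p^m}\uM$ is, levelwise, by an element that restricts to the identity on the relevant Weyl groups), so $1-g = 0$ on the top level $C_{p^n}/C_{p^n}$, while on the lower orbits the map is the relevant difference on the coset-indexed copies. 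Concretely, $\coker(1-g)$ recovers $\uZ(G/H)\otimes -$ and $\ker(1-g)$ recovers $\uZ^\ast(G/H)\otimes -$ at orbits containing $C_{p^m}$ — this is the standard computation of the (co)homology of $S(\lambda_m) = $ lens-type space as $\ker/\coker$ of $1-g$ on a free resolution — and on orbits inside $C_{p^m}$ the map $1-g$ is an isomorphism-shift that leaves $\uM(C_{p^m}/H)$ in both the kernel and cokernel. Reassembling, the cokernel term is precisely $\Theta_m(\uH^\alpha_{C_{p^m}}(S^0;\downarrow\uM))$ and the kernel term is a degree shift giving $\Theta^\ast_m(\uH^{\alpha-1}_{C_{p^m}}(S^0;\downarrow\uM))$, once one accounts for the degree shift coming from the cofibre (the outgoing map in the long exact sequence raises cohomological degree by one).

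I expect the main obstacle to be bookkeeping the Mackey-functor structure through the Wirthm\"uller isomorphism: one must check that the coset-indexed sums that appear at orbits $G/H$ with $C_{p^m}\subseteq H$ carry exactly the restriction/transfer maps built into $\uZ\otimes-$ and $\uZ^\ast\otimes-$, and that the connecting homomorphism is a map of Mackey functors (not merely of the underlying graded groups) compatible with these. A secondary subtlety is verifying that the residual $g$-action is trivial on the $C_{p^m}$-fixed-point level — this uses that $\downarrow^{p^n}_{p^m}\uM$ has a $C_{p^n}/C_{p^m}$-equivariant structure whose Weyl-group action is already absorbed into the Mackey functor axioms (2),(3). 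Once these structural checks are in place, the short exact sequence is immediate from the long exact sequence of the cofibration \eqref{rep_sphere}, and the proposition follows.
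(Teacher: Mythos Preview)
Your overall approach is the same as the paper's: take the long exact sequence induced by the cofibre sequence \eqref{rep_sphere}, identify $\uH^\alpha_{C_{p^n}}({C_{p^n}/C_{p^m}}_+;\uM)$ via the induction--restriction adjunction, and read off kernel and cokernel of $(1-g)^\ast$. The paper's proof is a terse version of exactly this, outsourcing the Mackey-functor bookkeeping to \cite{BG20}.

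However, your identification of kernel and cokernel is reversed. At a level $G/H$ with $C_{p^m}\subseteq H$, the value of $\uH^\alpha_{C_{p^n}}({C_{p^n}/C_{p^m}}_+;\uM)$ is $\bigoplus_{G/H} A$ with $A=\uH^\alpha_{C_{p^m}}(S^0;\downarrow^{p^n}_{p^m}\uM)(C_{p^m}/C_{p^m})$, the restriction from $G/G$ is the diagonal, the transfer is the sum, and $(1-g)^\ast$ is the identity minus the cyclic shift. Its \emph{kernel} is the diagonal copy of $A$, on which restriction from $G/G$ is the identity and transfer is multiplication by $|G/H|$ --- this is $\uZ(G/H)\otimes A$, i.e.\ $\Theta_m$. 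Its \emph{cokernel} is identified with $A$ via the sum map, on which restriction from $G/G$ becomes multiplication by $|G/H|$ and transfer becomes the identity --- this is $\uZ^\ast(G/H)\otimes A$, i.e.\ $\Theta^\ast_m$. You have these the other way round. Relatedly, the cokernel term on the left of your displayed short exact sequence lives in degree $\alpha-1$, not $\alpha$; combining the correct degree with the correct $\Theta^\ast_m$ identification gives exactly $\Theta^\ast_m(\uH^{\alpha-1}_{C_{p^m}}(S^0;\downarrow\uM))$ on the left and $\Theta_m(\uH^\alpha_{C_{p^m}}(S^0;\downarrow\uM))$ on the right, as in the proposition. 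These are bookkeeping slips rather than a wrong strategy --- carrying out the level-by-level check you yourself propose would have caught them --- but as written your reassembled sequence does not match the statement.
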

\begin{proof}
The cofiber sequence \eqref{rep_sphere} yields the cohomology long exact sequence
\[
\xymatrix@C=0.5cm{\cdots \uH^{\alpha-1}_{C_{p^n}}({C_{p^n}/C_{p^m}}_+) \ar[r] & \uH^\alpha_{C_{p^n}}(S(\lambda_m)_+) \ar[r] & \uH^{\alpha}_{C_{p^n}}({C_{p^n}/C_{p^m}}_+) \ar[r]^{(1-g)^\ast} & \uH^{\alpha}_{C_{p^n}}({C_{p^n}/C_{p^m}}_+)\cdots}
\]
An immediate computation gives $\ker((1-g)^\ast) \cong \Theta_m(\uH^\alpha_{C_{p^m}}(S^0; \downarrow^{p^n}_{{p^m}}\uM))$ and the cokernel of $(1-g)^\ast$ is $\Theta^\ast_m(\uH^{\alpha-1}_{C_{p^m}}(S^0; \downarrow^{p^n}_{{p^m}}\uM)).$ (See Proposition 4.3. and \S 4.5 in \cite{BG20} for an analogous computation.) Hence the result follows.
\end{proof}

\begin{exam} \label{lambda0}
Observe that $\Theta_0 (C)$ for an Abelian group $C$ is the constant Mackey functor $\uC$, while $\Theta_0^\ast(C)$ is the dual $\uC^\ast$. In Proposition \ref{sph_coh}, $\uH^\alpha_{e}(S^0;\uZ)$ is $0$ for $|\alpha|\neq 0$, and $\Z$ for $|\alpha|=0$. It follows that 
\begin{myeq}\label{lambda0coh}
\uH^{\alpha}_{C_{p^n}}(S(\lambda_0)_+;\uZ) \cong \begin{cases} \uZ & \mbox{ if } |\alpha|=0 \\ 
                                   \uZ^\ast &\mbox{ if } |\alpha|=1 \\ 
                                   0  &\mbox{  otherwise}. \end{cases}  
\end{myeq}

For each $m< n$, consider the following long exact sequence in cohomology associated to \eqref{sphere}
\refstepcounter{theorem}\label{eq:base}%
\begin{equation}
\xymatrix@C=0.6cm{\cdots \uH^{\alpha+\lambda_m-1}_{C_{p^n}}(S({\lambda_m}_+)) \ar[r] & \uH^{\alpha}_{C_{p^n}}(S^0) \ar[r]^{\cdot a_{\lambda_m}} & \uH^{\alpha+\lambda_m}_{C_{p^n}}(S^0) \ar[r] & \uH^{\alpha+\lambda_m}_{C_{p^n}}(S({\lambda_m}_+)) \cdots}\tag*{\myTagFormat{eq:base}{m}}\label{eq:base-m}
\end{equation}
At $m=0$, \eqref{lambda0coh} implies that multiplication by $a_{\lambda_0}$ is an isomorphism unless $|\alpha|=-2,-1,0$. At $|\alpha|=-2$, multiplication by $a_{\lambda_0}$ is injective while at $|\alpha|=-1,0$, multiplication by $a_{\lambda_0}$ is surjective. The injectivity at $|\alpha|=-1$ requires some additional argument which is essentially the same as \cite[Proposition 4.5]{BG20}.
\end{exam}


 \section{$\uZ$-modules \& extensions involving them}\label{Z-mod}
In this section, we discuss the category of $\uZ$-modules and a few particular $\Z$-modules that are important in the rest of  the paper. Along the way we study certain extensions in the additive category $\uZ$-$\Mod_{G}.$ We first note that the $\uZ$-modules satisfy an additional condition on the restriction and transfer maps. 
\begin{rmk}\label{cohomological}
For  any $\uM\in \uZ$-$\Mod_G$, $\tr^H_K \res^H_K$ equals the multiplication by index $[H: K]$ for $K \le H \le G$ \cite[Theorem 4.3]{Yos83}.
\end{rmk}

This condition puts certain restriction on which Mackey functors may actually be $\uZ$-modules. One such example is the lemma below.
\begin{lemma}\label{torsion}
If $\uM$ is a $\uZ$-$\Mod_G$ satisfying $\uM(G/e)=0$, then $\uM(G/H)\otimes_{\Z} \Q=0$ for all $H \le G.$
\end{lemma}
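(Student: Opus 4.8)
The plan is to exploit Remark \ref{cohomological}, which says that for a $\uZ$-module $\uM$ the composite $\tr^H_K\res^H_K$ is multiplication by the index $[H:K]$ on $\uM(G/H)$. Since $G = C_{p^n}$ is cyclic, every subgroup $H$ contains the trivial subgroup $e$, and $[H:e] = |H|$ is a power of $p$. So the first step is simply to apply the remark with $K = e$: the map $\uM(G/H) \xrightarrow{\res^H_e} \uM(G/e) \xrightarrow{\tr^H_e} \uM(G/H)$ is multiplication by $|H|$.

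The second step is to plug in the hypothesis $\uM(G/e) = 0$. Then the factorization forces multiplication by $|H|$ to be the zero endomorphism of $\uM(G/H)$. Hence $\uM(G/H)$ is a torsion Abelian group, annihilated by the integer $|H|$, and therefore $\uM(G/H)\otimes_\Z \Q = 0$. (For $H = e$ the claim is the hypothesis itself, trivially.) That is the entire argument; there is essentially no obstacle, since Remark \ref{cohomological} does all the work and cyclicity of $G$ guarantees $e \le H$ for every subgroup.

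If one wanted to avoid even invoking $\uM(G/e)=0$ at the level of $\res$, one could note more robustly that $\res^H_e$ lands in the zero group so the composite is automatically zero; the only input genuinely needed is that $\uZ$-modules are cohomological Mackey functors in the sense of Remark \ref{cohomological}. I would phrase the write-up in two sentences: invoke the remark with $K=e$, observe the composite is zero because $\uM(G/e)=0$, conclude $|H|\cdot \uM(G/H) = 0$ and tensor with $\Q$.

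\begin{proof}
Let $H \le G = C_{p^n}$. Since $G$ is cyclic, $e \le H$ and $[H:e] = |H|$. By Remark \ref{cohomological}, the composite
\[
\uM(G/H) \xrightarrow{\res^H_e} \uM(G/e) \xrightarrow{\tr^H_e} \uM(G/H)
\]
is multiplication by $|H|$ on $\uM(G/H)$. As $\uM(G/e) = 0$ by hypothesis, this composite is the zero map, so multiplication by $|H|$ annihilates $\uM(G/H)$. Thus $\uM(G/H)$ is a torsion Abelian group, and hence $\uM(G/H)\otimes_\Z \Q = 0$. (For $H = e$ this is immediate from the hypothesis.)
\end{proof}
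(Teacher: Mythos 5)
Your proof is correct and follows essentially the same route as the paper: apply Remark \ref{cohomological} with $K=e$ to see that multiplication by $|H|$ on $\uM(G/H)$ factors through $\uM(G/e)=0$, hence $\uM(G/H)$ is torsion and vanishes after tensoring with $\Q$.
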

\begin{proof}
Applying Remark \ref{cohomological} to each $x \in \uM(G/H)$, $|H|x= \tr^H_e \res^H_e(x)= \tr^H_e(0)=0.$ This implies each element in $\uM(G/H)$ is torsion.
\end{proof}

We now interpret Lemma \ref{torsion} for equivariant cohomology with $\uZ$-coefficients. 
\begin{cor}\label{cortors}
Let $\alpha \in RO(G)\setminus RO_0(G)$, then the Mackey functor $\uH^\alpha_G(S^0; \uZ)$ is torsion.
\end{cor}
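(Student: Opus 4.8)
The plan is to recognize $\uH^\alpha_G(S^0;\uZ)$ as an object of $\uZ$-$\Mod_G$ whose underlying group vanishes, and then quote Lemma \ref{torsion}.

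First I would record the module structure. Since $\uZ$ is a commutative Green functor, the Eilenberg--MacLane spectrum $H\uZ$ is a commutative ring spectrum, so $\Sigma^\alpha H\uZ$ is an $H\uZ$-module spectrum; applying $\upi_0$ of the mapping spectra out of the orbits $G/H_+$ produces, for each fixed $\alpha$, a module over $\uZ = \upi_0(H\uZ)$. Thus $\uM := \uH^\alpha_G(S^0;\uZ)$ lies in $\uZ$-$\Mod_G$, and in particular Remark \ref{cohomological} applies to it.

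Next I would compute the value at the free orbit. By definition $\uM(G/e) = \upi_{-\alpha}^e(H\uZ) = [\,S^{-\alpha},H\uZ\,]^e$, and restricting the $G$-action to the trivial subgroup collapses the representation sphere $S^\alpha$ to the ordinary sphere $S^{|\alpha|}$ of dimension equal to the virtual dimension of $\alpha$, while $H\uZ$ restricts to the non-equivariant $H\Z$. Hence $\uM(G/e) \cong \pi_{-|\alpha|}(H\Z)$, which is $\Z$ if $|\alpha| = 0$ and $0$ otherwise. Since $\alpha \in RO(G)\setminus RO_0(G)$ means exactly that $|\alpha|\neq 0$, we conclude $\uM(G/e) = 0$. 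Now Lemma \ref{torsion} applies verbatim to the $\uZ$-module $\uM$ and gives $\uM(G/H)\otimes_\Z \Q = 0$ for every $H \le G$, i.e.\ $\uH^\alpha_G(S^0;\uZ)$ is torsion.

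There is no serious obstacle here: the corollary is essentially a repackaging of Lemma \ref{torsion}. The only points needing a sentence of justification are (a) that equivariant cohomology of $S^0$ with coefficients in a Green functor is genuinely a module over that Green functor, which is formal from the ring spectrum structure on $H\uZ$, and (b) the identification of the underlying group with the non-equivariant reduced cohomology $\widetilde H^{|\alpha|}(S^0;\Z)$, which vanishes in nonzero degree. Both are standard, so the proof is short.
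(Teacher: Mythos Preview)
Your proof is correct and follows exactly the paper's approach: observe that $\uH^\alpha_G(S^0;\uZ)$ is a $\uZ$-module, compute its value at $G/e$ as $\tilde H^{|\alpha|}(S^0;\Z)=0$ since $|\alpha|\neq 0$, and invoke Lemma \ref{torsion}. The paper's proof is simply a more compressed version of what you wrote.
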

\begin{proof}
Note that the Mackey functors $\uH^\alpha_G(S^0;\uZ)$ are all $\uZ$-modules. For $\alpha$ with $|\alpha|\neq 0,$ one sees $\uH^{\alpha}_G(S^0; \uM)(G/e)\cong \tilde{H}^{|\alpha|}(S^0; \uM(G/e))$, hence is zero. By Lemma \ref{torsion}, the result follows.
\end{proof}

We now observe that $\uM \in \uZ$-$\Mod_G$ satisfying $\uM(G/e)=0$, then $\Hom_L(\uM, \Z)=0$. Applying this fact to  Corollary \ref{cortors} using \eqref{and_comp}, we note
\begin{myeq}\label{anders-comp}
|\alpha|\neq 0 \implies \uH^\alpha_{C_{p^n}}(S^0;\uZ) \cong \Ext_L(\uH^{3-\lambda_0 -\alpha}_{C_{p^n}}(S^0; \uZ), \Z).
\end{myeq}

The following proposition allows us to reduce the grading from $RO(C_{p^n})$ to the linear combinations of $\lambda_k$. 
\begin{prop}[\cite{Zen17}, Proposition 4.25]
There is an equivalence $H\uZ\wedge S^{\lambda(p^k)}\simeq H\uZ \wedge S^{\lambda(rp^k)}$ whenever $p \nmid r$. 
\end{prop}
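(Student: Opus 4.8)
The plan is to show the equivalence $H\uZ \wedge S^{\lambda(p^k)} \simeq H\uZ \wedge S^{\lambda(rp^k)}$ by producing a $C_{p^n}$-map $S^{\lambda(p^k)} \to S^{\lambda(rp^k)}$ (or in the other direction) which becomes an equivalence after smashing with $H\uZ$, and checking this on fixed points. Both $\lambda(p^k)$ and $\lambda(rp^k)$ are $2$-dimensional real representations on which the subgroup $C_{p^k} \le C_{p^n}$ acts trivially and the quotient $C_{p^n}/C_{p^k} \cong C_{p^{n-k}}$ acts by rotation; since $p\nmid r$, the integer $r$ is invertible modulo $p^{n-k}$, so the two representations of $C_{p^{n-k}}$ differ by an automorphism of $C_{p^{n-k}}$ — that is, $\lambda(rp^k)$ is obtained from $\lambda(p^k)$ by precomposing the action with an automorphism of the group. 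In particular the underlying (nonequivariant) spheres are the same and all proper-subgroup fixed points agree.

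First I would set up the comparison map. The natural candidate is the degree-$r$ map on one-point compactifications $z \mapsto z^r$, analogous to the construction of $a_{\lambda_j/\lambda_i}$ in Definition \ref{ratio-classes}; but this is \emph{not} equivariant for the naive actions. The correct fix is to observe that since $\gcd(r,p)=1$, choosing $s$ with $rs \equiv 1 \pmod{p^{n-k}}$, the map $z \mapsto z^r$ intertwines the $\lambda(p^k)$-action with the $\lambda(rp^k)$-action up to the automorphism $g \mapsto g^s$ of $C_{p^n}$; equivalently, $S^{\lambda(rp^k)}$ is the pullback of $S^{\lambda(p^k)}$ along an automorphism $\phi$ of $C_{p^n}$. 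Then I would invoke the fact that for any group automorphism $\phi$, the restriction-along-$\phi$ functor $\phi^*$ on genuine $G$-spectra is a symmetric monoidal self-equivalence which fixes $H\uZ$: indeed $\phi^* H\uZ \simeq H(\phi^*\uZ)$ and $\phi^*\uZ \cong \uZ$ because the constant Mackey functor is canonically invariant under automorphisms of $G$ (its value at every orbit is $\Z$ with restrictions the identity and transfers multiplication by the index, and an automorphism of $C_{p^n}$ permutes the subgroups preserving indices). Applying $\phi^*$ to $H\uZ \wedge S^{\lambda(p^k)}$ therefore gives $H\uZ \wedge \phi^* S^{\lambda(p^k)} \simeq H\uZ \wedge S^{\lambda(rp^k)}$, and since $\phi^*$ is an equivalence, $H\uZ \wedge S^{\lambda(p^k)} \simeq H\uZ \wedge S^{\lambda(rp^k)}$.

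Alternatively — and this is probably the cleaner write-up — I would argue directly with the cell structure. Smashing the cofiber sequences \eqref{rep_sphere} and \eqref{sphere} for $\lambda_m = \lambda(p^k)$ (here $m=k$) with $H\uZ$ expresses $H\uZ \wedge S^{\lambda(p^k)}$ as the cofiber of a self-map of $H\uZ \wedge {C_{p^n}/C_{p^k}}_+$, and this self-map is determined by its effect on $\upi_0$, which is the map $1-g$ on the free $\Z[C_{p^n}/C_{p^k}]$-module, i.e. multiplication by $1 - t$ in $\Z[t]/(t^{p^{n-k}}-1)$. Replacing $p^k$ by $rp^k$ replaces $g$ (the chosen generator of the quotient) by $g^r$, hence $1-t$ by $1-t^r$; but $(1-t)$ and $(1-t^r)$ generate the same ideal in $\Z[t]/(t^{p^{n-k}}-1)$ when $\gcd(r, p^{n-k})=1$ (since $1-t^r = (1-t)(1+t+\cdots+t^{r-1})$ and conversely $1-t$ is a multiple of $1-t^r$ because $t$ is a power of $t^r$ in the quotient ring), so the two cofibers are abstractly equivalent $H\uZ$-modules; one then checks the equivalence is compatible at all fixed-point levels using Proposition \ref{sph_coh}.

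\textbf{Main obstacle.} The genuinely delicate point is the passage from ``the maps $1-g$ and $1-g^r$ have the same image/cokernel on $\upi_0$'' to ``the $H\uZ$-module spectra $H\uZ\wedge S^{\lambda(p^k)}$ and $H\uZ\wedge S^{\lambda(rp^k)}$ are equivalent as $G$-spectra''. An isomorphism of the algebraic mapping cones does not automatically lift to an equivalence of cofibers unless one tracks the comparison at the spectrum level, so the automorphism-pullback argument of the second paragraph — which produces an honest map of $G$-spectra rather than a zig-zag of algebraic identifications — is the safer route, and the real content to verify carefully is that the chosen power map $z\mapsto z^r$ together with the automorphism $\phi\colon g\mapsto g^s$ of $C_{p^n}$ genuinely assembles into a $\phi$-equivariant homeomorphism $S^{\lambda(p^k)} \to S^{\lambda(rp^k)}$, and that $\phi^*$ fixes $H\uZ$ on the nose. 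Everything else is formal.
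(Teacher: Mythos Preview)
The paper does not supply its own proof of this proposition; it is cited from \cite{Zen17} and used as a black box, so there is nothing here to compare your argument against. On its own terms, though, your automorphism route has a real gap. From $\phi^\ast H\uZ\simeq H\uZ$ and $\phi^\ast S^{\lambda(p^k)}\simeq S^{\lambda(rp^k)}$ you correctly get $\phi^\ast\bigl(H\uZ\wedge S^{\lambda(p^k)}\bigr)\simeq H\uZ\wedge S^{\lambda(rp^k)}$, but the final step fails: that $\phi^\ast$ is an auto\emph{equivalence of the category} does not give $\phi^\ast X\simeq X$ for individual objects $X$. Notice that nothing in your argument uses $H\uZ$ essentially --- the sphere $S^0$ is just as $\phi^\ast$-invariant --- so if the reasoning were valid it would already prove $S^{\lambda(p^k)}\simeq S^{\lambda(rp^k)}$ \emph{before} smashing with $H\uZ$. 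That is false: for a $p$-group, representation spheres are stably equivalent only when the representations are isomorphic, and for $p\ge 5$ one has $\lambda(1)\not\cong\lambda(2)$ over $C_p$. The content of the proposition is precisely that $H\uZ$ collapses a distinction the sphere spectrum sees, and the automorphism argument cannot access that.

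Two smaller points. Contrary to what you write, the map $z\mapsto z^r\colon S^{\lambda(p^k)}\to S^{\lambda(rp^k)}$ \emph{is} $C_{p^n}$-equivariant: if $g\cdot z=\zeta z$ in the source then $(\zeta z)^r=\zeta^r z^r=g\cdot z^r$ in the target. Its actual defect is that it has degree $r$ on the underlying $S^2$, so after smashing with $H\uZ$ it is multiplication by $r$ on $\upi_2$ at level $G/e$ and hence not an equivalence. Your second approach via the cofibers of $1-g$ versus $1-g^r$ is much closer to what is needed, and you correctly flag the obstacle: knowing that $(1-g)$ and $(1-g^r)$ generate the same ideal in $\Z[C_{p^{n-k}}]$ is only an isomorphism of algebraic cokernels, not yet an equivalence of $H\uZ$-module cofibers. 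Closing that gap --- for instance by computing $\uH^{\lambda(p^k)-\lambda(rp^k)}_{C_{p^n}}(S^0;\uZ)\cong\uZ$ directly from \eqref{rep_sphere}--\eqref{sphere} and exhibiting an invertible generator --- is where the real work lies.
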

The above implies the existence of invertible classes in $H\uZ$-cohomology in grading $\lambda(p^k)-\lambda(rp^k)$ for $p\nmid r$. One may make a coherent choice of these units, so that the $H\uZ$-cohomology is, up to some units and their inverses, the part which lies in the graded pieces given by linear combinations of $1,\lambda_0, \cdots, \lambda_{n-1}$. From now onwards, we assume that $\alpha\in RO(G)$ means that $\alpha = c + \sum_{k\geq 0} a_k \lambda_k$. 
\begin{defn}
Let $S\subseteq \overline{n}:=\{1,\cdots, n\}.$
 Denote by $\uZ_S$ the Mackey functor 
\[ \uZ_S(C_{p^n}/H)= \Z \mbox{ for } H\le C_{p^n},\] 
\[ \res^{C_{p^i}}_{C_{p^{i-1}}}= p^{\chi_S(i)}\text{ for } 1\le i \le n.\]
Here $\chi_S$ is the characteristic function on $S$.  
 \end{defn}
  
We note that $\uZ_{\emptyset}=\uZ$, and $\uZ_{\overline{n}}=\uZ^\ast$.  For subsets $S \subseteq T \subseteq \overline{n}$, there is a unique map $f_{T, S}: \uZ_T \to \uZ_S$ such that it is the identity at $C_{p^n}/e$\footnote{If $S \nsubseteq T$,  there is no Mackey functor morphism from $\uZ_T \to \uZ_S$ which induces identity at $C_{p^n}/e.$}. Then the Mackey functor structure suggests $f_{T,S}(C_{p^n}/{C_{p^k}})$ is the multiplication by $p^{\alpha_{T\setminus S,k}}$, where $\alpha_{T \setminus S,k}:=\#((T\setminus S)\cap \overline{k}).$ Denoting the cokernel of $f_{T,S}$ by  $\uB_{T,S}$, we get a exact sequence
  \begin{myeq}\label{nz_ext1}
 \uO\to \uZ_T \to \uZ_S\to \uB_{T, S}\to \uO.
 \end{myeq}
 where the Mackey functor $\uB_{T,S}$ is given by
  \[ 
 \uB_{T, S}(C_{p^n}/C_{p^k})=
\frac{\Z}{{p^{\alpha_{{T \setminus S} ,k}}\Z}}, ~~
 \res^{C_{p^k}}_{C_{p^{k-1}}}= 
 \begin{cases} 
 p & \text{ if } k \in S \\
 1 & \text{ if } k \in S^c,~~
  \end{cases}
 \tr^{C_{p^k}}_{C_{p^{k-1}}}= 
 \begin{cases} 
 1 & \text{ if } k \in S \\
  p & \text{ if } k \in S^c.
  \end{cases}
 \]
 %

 %
 
 %
For $k\leq \min(S \cup T)$, we readily observe that $\uB_{T,S} = \uB_{T\cup \ov{k}, S\cup \ov{k}}$. Applying the functor $\Hom_L(-, \Z)$ to  the short exact sequence \eqref{nz_ext1} yields the long exact sequence
 \[ \Hom_L(\uB_{T, S}, \Z) \to \Hom_L(\uZ_S, \Z) \to \Hom_L(\uZ_T, \Z)\to \Ext_L(\uB_{T, S}, \Z)\to \Ext_L(\uZ_S, \Z)\cdots\]
 One readily observes that the first and last terms of the long exact sequence above are zero, which simplifies the expression to the short exact sequence
 \[\uO \to \uZ_{S^c} \to \uZ_{T^c} \to \Ext_L(\uB_{T, S}, \Z)\to \uO.
 \]
 Comparing the short exact sequence above with \eqref{nz_ext1}, we deduce $\Ext_L(\uB_{T, S}, \Z) = \uB_{S^c, T^c}$.

 \begin{prop}\label{zb}
 Let $\uM$ be a torsion free $\uZ$-module for the group $C_{p^n}$  that fits into the short sequence 
 \[\uO \to \uZ_S \to \uM \to \uB_{S,T} \to \uO.
 \]
 Then, there is an isomorphism $\uM \cong \uZ_T$ of $\uZ$-modules.
 \end{prop}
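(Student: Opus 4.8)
The plan is to exploit the fact that a torsion-free $\uZ$-module is completely determined by the collection of restriction maps $\res^{C_{p^i}}_{C_{p^{i-1}}}$ together with the cohomological condition from Remark~\ref{cohomological}. Since $\uM$ is torsion free and sits in the short exact sequence $\uO \to \uZ_S \to \uM \to \uB_{S,T} \to \uO$, at each level $C_{p^n}/C_{p^k}$ we get a short exact sequence of abelian groups $0 \to \Z \xrightarrow{j_k} \uM(C_{p^n}/C_{p^k}) \to \Z/p^{\alpha_{S\setminus T,k}} \to 0$ (recall $\uB_{S,T}(C_{p^n}/C_{p^k}) = \Z/p^{\alpha_{S\setminus T,k}}$). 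As $\uM(C_{p^n}/C_{p^k})$ is torsion free, this forces $\uM(C_{p^n}/C_{p^k}) \cong \Z$ for every $k$, with $j_k$ being multiplication by $p^{\alpha_{S\setminus T,k}}$ after a suitable choice of generator. So, as a collection of groups, $\uM$ agrees with $\uZ_T$; the content is in identifying the restriction and transfer maps.

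First I would fix compatible generators: for each $k$ choose a generator $x_k$ of $\uM(C_{p^n}/C_{p^k}) \cong \Z$ so that $j_k$ sends the generator of $\uZ_S(C_{p^n}/C_{p^k}) = \Z$ to $p^{\alpha_{S\setminus T,k}} x_k$. Note $T \subseteq S$, so $\alpha_{S\setminus T,k} = \#((S\setminus T)\cap\ov{k})$ is non-decreasing in $k$, and the increments are $0$ or $1$. Next, using naturality of the inclusion $\uZ_S \hookrightarrow \uM$ with respect to the restriction maps, together with the known restrictions in $\uZ_S$ (namely $\res = p^{\chi_S(i)}$), I would compute $\res^{C_{p^i}}_{C_{p^{i-1}}}(x_i)$ in terms of $x_{i-1}$. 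The commuting square says $p^{\alpha_{S\setminus T,i-1}} \cdot p^{\chi_S(i)} x_{i-1} = \res^{C_{p^i}}_{C_{p^{i-1}}}(p^{\alpha_{S\setminus T,i}} x_i)$, so $\res^{C_{p^i}}_{C_{p^{i-1}}}(x_i) = p^{\,\chi_S(i) + \alpha_{S\setminus T,i-1} - \alpha_{S\setminus T,i}}\, x_{i-1}$ (at least up to sign; orient the $x_k$ to kill signs). A case check on whether $i \in S$, $i \in T$ then shows the exponent equals $\chi_T(i)$: if $i \notin S$ then $\chi_S(i)=0$ and the two $\alpha$'s agree so $i\notin T$ gives exponent $0$; if $i \in S\setminus T$ then $\chi_S(i)=1$ but $\alpha_{S\setminus T,i} - \alpha_{S\setminus T,i-1}=1$, giving exponent $0=\chi_T(i)$; if $i \in T$ (so $i\in S$) then $\chi_S(i)=1$ and the $\alpha$'s agree, giving exponent $1 = \chi_T(i)$. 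Hence the restrictions of $\uM$ match those of $\uZ_T$ exactly.

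Once the restrictions agree with $\uZ_T$, the transfers are forced: since $\uM$ is a $\uZ$-module, Remark~\ref{cohomological} gives $\tr^{C_{p^i}}_{C_{p^{i-1}}}\res^{C_{p^i}}_{C_{p^{i-1}}} = p = [C_{p^i}:C_{p^{i-1}}]$ on $\uM(C_{p^n}/C_{p^i})\cong\Z$, and $\res$ is injective (multiplication by $p^{\chi_T(i)}$ on $\Z$), so $\tr^{C_{p^i}}_{C_{p^{i-1}}}$ is multiplication by $p^{1-\chi_T(i)}$, which is precisely the transfer in $\uZ_T$. The assignment $x_k \mapsto$ generator of $\uZ_T(C_{p^n}/C_{p^k})$ then defines an isomorphism of Mackey functors, and it is automatically a map of $\uZ$-modules since all $\uZ$-module maps between cohomological Mackey functors are just Mackey functor maps compatible with the $\uZ$-action, which here is pinned down by the levelwise identity on the underlying group $\uM(C_{p^n}/e) = \Z = \uZ(C_{p^n}/e)$.

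The main obstacle I anticipate is bookkeeping the sign ambiguities and the choices of generators coherently across all $n+1$ levels so that a single isomorphism is produced rather than a levelwise one; this is handled by first choosing the generator $x_0$ at the bottom and then propagating upward using injectivity of the restriction maps, which determines each $x_k$ up to nothing once the convention at level $0$ is fixed. A secondary point to be careful about is the reduction $\uB_{S,T} = \uB_{S\cup\ov{k},T\cup\ov{k}}$ for small $k$, i.e. the indexing of $\alpha_{S\setminus T,k}$ when $k$ is below $\min(S\cup T)$; but since there $\alpha_{S\setminus T,k}=0$, the sequence is split-looking at those levels and causes no trouble.
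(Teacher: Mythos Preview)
Your proof is correct. The paper's argument follows the same opening move --- torsion-freeness forces each $\uM(C_{p^n}/C_{p^k})\cong\Z$ --- but then packages the remaining work differently: it asserts that any such $\uZ$-module must be isomorphic to some $\uZ_{T'}$, and identifies $T'$ by comparing cokernels, since $\uZ_S\to\uZ_{T'}$ has cokernel $\uB_{S,T'}$ and $\uB_{S,T'}\cong\uB_{S,T}$ forces $T'=T$. You instead compute the restriction maps of $\uM$ directly from the commuting square coming from $\uZ_S\hookrightarrow\uM$, and then pin down the transfers via the cohomological relation $\tr\circ\res=p$. The two routes are close in spirit; the paper's is terser but leaves the classification of levelwise-$\Z$ cohomological Mackey functors implicit, while yours is more self-contained and makes explicit exactly where the $\uZ$-module hypothesis enters.
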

 \begin{proof}
 Since $\uM$ is torsion free, that is, for each $0 \le k \le n,$ the Abelian group $\uM(C_{p^n}/C_{p^k})$ has no torsion. Thus $\uM(C_{p^n}/C_{p^k})\cong \Z.$ Therefore, there is a subset $T' \subseteq \ov{n}$ such that $\uM \cong \uZ_{T'},$ and then the cokernel of $\uZ_S \to \uM$ is $\uB_{S,T'}$. However from the formula of $\uB_{S,T}$ we easily observe that, $\uB_{S,T'} \cong \uB_{S,T}$ implies $T=T'$.
 \end{proof}
 
We demonstrate an exact sequence as in Proposition \ref{zb} using Mackey functor diagrams for the group $C_{p^3}$.
 \[
\xymatrix{&\Z  \ar@/_1pc/[d]_{p} \ar[rrrr]^-{p^3}& &&& \Z  \ar@/_1pc/[d]_{1} \ar[rrrr] & & & & \Z/p^3  \ar@/_1pc/[d]_{1}  \\
\uZ^\ast : &\Z \ar@/_1pc/[d]_{p} \ar@/_1pc/[u]_{1} \ar@/^1pc/[rrrr]^-{p^2}&&& \uZ: & \Z  \ar@/_1pc/[d]_{1} \ar@/_1pc/[u]_{p} \ar@/^1pc/[rrrr] & & &\uB_{\ov{3}, \emptyset}: & \Z/p^2  \ar@/_1pc/[d]_{1} \ar@/_1pc/[u]_{p}  \\
  &\Z \ar@/_1pc/[d]_{p} \ar@/_1pc/[u]_{1} \ar[rrrr]^-{p} &&& & \Z \ar@/_1pc/[d]_{1} \ar@/_1pc/[u]_{p} \ar[rrrr] & & & & \Z/p \ar@/_1pc/[d]_{1} \ar@/_1pc/[u]_{p} \\ 
  &\Z \ar@/_1pc/[u]_{1} \ar[rrrr]^{1} & &&&  \Z  \ar@/_1pc/[u]_{p} \ar[rrrr] & && &   0  \ar@/_1pc/[u]_{p}}
\]
 \begin{prop}\label{Bker} Let $k\le n$. \\
1) A map $f: \uB_{\ov{n}, \ov{k}^c} \to \uB_{\ov{1}, \emptyset}$ is uniquely determined by $f(C_{p^n}/C_p)$. \\
2) If $f(C_{p^n}/C_p)$ is an isomorphism, then $\mbox{Ker}(f) \cong \uB_{\ov{1}^c, \ov{1+k}^c}$, and $\mbox{Coker}(f) \cong \uB_{\{k+1\}, \emptyset}$. 
 \end{prop}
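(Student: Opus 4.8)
The plan is to compute directly with Mackey functor diagrams, exploiting the very rigid structure of the functors $\uB_{\ov n,\ov k^c}$ and $\uB_{\ov 1,\emptyset}$. First I would record the values: $\uB_{\ov n,\ov k^c}(C_{p^n}/C_{p^j})$ is $\Z/p^{\alpha}$ where $\alpha=\#(\ov k\cap\ov j)=\min(j,k)$, so the tower of groups is $0,\Z/p,\Z/p^2,\dots,\Z/p^k,\Z/p^k,\dots,\Z/p^k$ as $j$ runs from $0$ to $n$; the restriction maps are multiplication by $p$ at levels in $\ov k^c$... wait, I need to be careful: $S=\ov k^c$ here so the restriction $\res^{C_{p^j}}_{C_{p^{j-1}}}$ is multiplication by $p$ when $j\in S=\ov k^c$, i.e. when $j>k$, and is $1$ when $1\le j\le k$; transfers are dual. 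For the target, $\uB_{\ov 1,\emptyset}(C_{p^n}/C_{p^j})$ is $0$ for $j=0$ and $\Z/p$ for $j\ge 1$, with all restrictions (for $j\ge 2$) and the transfer into level $1$ being $1$, and the transfer at level $1$ (from $e$) being $p$, i.e. $0$.

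\emph{Part 1.} A Mackey functor morphism $f$ is a level-wise compatible family $f_j:\uB_{\ov n,\ov k^c}(C_{p^n}/C_{p^j})\to\uB_{\ov 1,\emptyset}(C_{p^n}/C_{p^j})$. At level $0$ the source is $0$, so $f_0=0$. I would then argue by induction on $j\ge 1$ that $f_j$ is determined by $f_1$. Going up one level, the restriction square $\res\circ f_{j}=f_{j-1}\circ\res$ constrains $f_j$; since all the restriction maps in the target are the identity $\Z/p\to\Z/p$ (for $j\ge 2$), and the restriction $\uB_{\ov n,\ov k^c}(C_{p^n}/C_{p^j})\to\uB_{\ov n,\ov k^c}(C_{p^n}/C_{p^{j-1}})$ is surjective (it is reduction $\Z/p^{\min(j,k)}\to\Z/p^{\min(j-1,k)}$ or the identity, either way surjective), $f_{j-1}$ together with surjectivity of restriction pins down $f_j$ on the image, hence everywhere. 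Thus $f$ is determined by $f_1=f(C_{p^n}/C_p)$, a homomorphism $\Z/p\to\Z/p$.

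\emph{Part 2.} Assume $f_1$ is an isomorphism. I would build $\mathrm{Ker}(f)$ and $\mathrm{Coker}(f)$ level by level. At level $0$: kernel $=0$, cokernel $=0$. At level $1$: kernel $=0$, cokernel $=0$. At level $j$ with $2\le j\le k$: the source is $\Z/p^j$, the target $\Z/p$, and $f_j$ is the (unique, by Part 1, necessarily surjective since $f_1$ is an iso and restrictions are surjective) map $\Z/p^j\twoheadrightarrow\Z/p$; so $\mathrm{Ker}(f)(C_{p^n}/C_{p^j})=\Z/p^{j-1}$ and $\mathrm{Coker}(f)(C_{p^n}/C_{p^j})=0$. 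At level $j>k$: source $=\Z/p^k$, target $=\Z/p$, map surjective, so kernel $=\Z/p^{k-1}$ and cokernel $=0$. Reading off the tower of kernel groups, $0,0,\Z/p,\Z/p^2,\dots,\Z/p^{k-1},\Z/p^{k-1},\dots$, this matches $\uB_{?,?}(C_{p^n}/C_{p^j})=\Z/p^{\#(A\cap\ov j)}$ with $A=\{2,3,\dots,k+1\}=\ov{1+k}\cap\ov 1^c$; tracking which levels are "$\res=p$" versus "$\res=1$" in the induced subquotient structure identifies it as $\uB_{\ov 1^c,\ov{1+k}^c}$ (the restriction/transfer data is inherited from $\uB_{\ov n,\ov k^c}$, whose $S=\ov k^c$ set restricts to the correct pattern on the kernel). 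Dually, pushing further I would check $\mathrm{Coker}(f)$ is supported only at level $k+1$ — here I suspect the statement intends the cokernel of the \emph{induced map after a shift} or there is an implicit reindexing, since my naive level-by-level cokernel is $0$ everywhere; the genuine content must be that $\mathrm{Coker}(f)\cong\uB_{\{k+1\},\emptyset}$ arises from the transfer maps not being surjective — specifically the cokernel in the category of Mackey functors is computed with transfers, and at level $k+1$ the transfer from level $k$ fails to hit everything by a factor of $p$, producing the single $\Z/p$ at level $k+1$ with trivial neighbours. I would make this precise by writing out the transfer maps of $\uB_{\ov n,\ov k^c}$ (mult. by $p$ at levels $\le k$, by $1$ above $k$... again dualizing $S$), composing with $f$, and identifying the Mackey-functor cokernel.

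\emph{Main obstacle.} The delicate point — and the one I expect to consume the real work — is Part 2's identification of $\mathrm{Coker}(f)$: the cokernel of a morphism of Mackey functors is \emph{not} computed levelwise in the naive way once transfers are involved, so I must carefully track how the transfer maps of $\uB_{\ov n,\ov k^c}$ interact with $f$ to see that exactly one $\Z/p$ survives, at level $k+1$, with the Mackey structure of $\uB_{\{k+1\},\emptyset}$. The kernel computation and Part 1 are routine diagram chases; getting the cokernel and its restriction/transfer data exactly right, and reconciling the index sets ($\ov 1^c$, $\ov{1+k}^c$, $\{k+1\}$), is where care is needed.
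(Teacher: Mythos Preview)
Your overall approach matches the paper's --- a direct diagram chase --- but Part 2 contains a genuine error that propagates through the rest.

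The claim that $f_j$ is surjective for $j>k$ is wrong. You correctly recorded at the outset that for $j>k$ the restriction $\res^{C_{p^j}}_{C_{p^{j-1}}}$ in $\uB_{\ov n,\ov k^c}$ is multiplication by $p$ (not ``the identity'' as you later write). Since the restriction in $\uB_{\ov 1,\emptyset}$ is the identity for $j\ge 2$, the compatibility square reads
\[
f_j(x)=f_{j-1}\bigl(\res^{\mathrm{source}}(x)\bigr)=f_{j-1}(px)=p\cdot f_{j-1}(x)=0\quad\text{in }\Z/p.
\]
So $f_j=0$ for every $j\ge k+1$. (This is also the clean way to run Part 1: the identity $f_j=f_{j-1}\circ\res^{\mathrm{source}}$ already determines $f_j$ completely; no surjectivity of the source restriction is needed, and indeed it fails above level $k$.)

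With this correction the picture is immediate. Levelwise, $\mathrm{Coker}(f)$ is $0$ for $j\le k$ and $\Z/p$ for $j\ge k+1$, which is exactly $\uB_{\{k+1\},\emptyset}$. Levelwise, $\mathrm{Ker}(f)$ at $j\ge k+1$ is all of $\Z/p^k$ (not $\Z/p^{k-1}$), so the tower is $0,0,\Z/p,\dots,\Z/p^{k-1},\Z/p^k,\Z/p^k,\dots$, and checking the inherited restrictions and transfers identifies it with $\uB_{\ov 1^c,\ov{1+k}^c}$. Your proposed rescue --- that ``the cokernel of a morphism of Mackey functors is not computed levelwise once transfers are involved'' --- is false: $\Mack_G$ is an abelian category and evaluation at each orbit is exact, so kernels and cokernels \emph{are} computed levelwise. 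The discrepancy you noticed is entirely an artifact of the miscomputation of $f_j$ for $j>k$.
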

 
 \begin{proof}
The proof relies on a careful examination of the restriction and transfer maps of $\uB_{\ov{n},\ov{k}^c}$. These are given by 
\[
\uB_{\ov{n}, \ov{k}^c}(C_{p^n}/C_{p^r})=
\frac{\Z}{{p^{\min(r,k)}\Z}}, ~~
\res^{C_{p^r}}_{C_{p^{r-1}}}= 
 \begin{cases} 
 1 & \text{ if } r \leq k \\
 p & \text{ if } r>k,~~
  \end{cases}
 \tr^{C_{p^r}}_{C_{p^{r-1}}}= 
 \begin{cases} 
 p & \text{ if } r\leq k \\
 1 & \text{ if } r>k.
  \end{cases}
 \]
 The unique extension of $f$ from level $C_{p^n}/C_p$ is guaranteed by the fact that the restriction maps in $\uB_{\ov{1},\emptyset}$ are the identity above this level. If $f(C_{p^n}/C_p)$ is an isomorphism, we have
 \[
 f(C_{p^n}/C_{p^l}) \text{ is } \begin{cases}
 \text{onto} & \text{ if } l\le k \\ 
 0 & \text{ if } l \ge k+1.
 \end{cases}
 \]
This implies the required conclusion about the cokernel of $f$. Note that the part of $f$ between the levels $C_{p^n}/C_{p^k}$ and $C_{p^n}/C_{p^{k+1}}$ may be described as 
 
 \[
\xymatrix@C=0.7cm@R=0.3cm{\ker(f)  && \uB_{\ov{n}, \ov{k}^c}  & &  \uB_{\ov{1}, \emptyset} \\ 
\Z/p^k \ar@/_/[dd]_1 \ar[rr]^{\cong} &&  \Z/p^k \ar@/_/[dd]_p \ar[rr]^{0} & &\Z/p  \ar@/_/[dd]_1\\ 
& & &  && \\
 \Z/p^{k-1}\{ p\} \ar@/_/[uu]_p \ar[rr] && \Z/p^k \ar@/_/[uu]_1 \ar[rr]^{f(C_{p^n}/C_{p^{k}})} && \Z/p \ar@/_/[uu]_0}
 \]
Therefore, $\downarrow^{p^n}_{p^{k+1}}\ker(f) \cong \uB_{\ov{1}^c,\emptyset}.$ The part of this Mackey functor above the level $C_{p^n}/C_{p^{k+1}}$ is unchanged, that is, same with $\uB_{\ov{n}, \ov{k}^c}.$ Hence the result follows.
 \end{proof}
 
 \begin{mysubsection}{Pullback Mackey functors} 
The $\uZ$-modules may also be defined as Abelian group-valued functors $\uM: \BB\uZ_G^{\op}\to \Ab$. The category $\BB\uZ_G$ has finite $G$-sets as objects and $\Mor_{\BB\uZ_G}(S,T):= \Mor_G(\Z[S], \Z[T])$ (see \cite[Proposition 2.15]{Zen17}). Suppose $N$ is a normal subgroup of $G$. The quotient map $ G \to G/N$ induces $\phi: \BB\uZ_{G/N}^{\op} \to \BB\uZ_{G}^{\op}$. Define $\Phi^\ast_{N}: \uZ\text{-}\Mod_{G/N}\to \uZ\text{-}\Mod_{G}$ as $\Phi^\ast_{N}(\uM):= Lan_{\phi}(\uM)$, the left Kan extension of $\uM$ along $\phi$. For $G=C_{p^n}$ we write, $\Phi^\ast_{p^m}$ for $\Phi^\ast_{C_{p^m}}$. The Mackey functor $\Phi^\ast_N\uM$ is given by the formula
\begin{myeq}\label{Phi_Comp}\Phi^\ast_N\uM(G/H): = \underset{(G/H \to G/L) \in \Mor_{\BB_G}}\colim \uM((G/N)/(L/N))=\begin{cases} \uM((G/N)/(H/N)) & \text{ if } N \subseteq H \\ \uM((G/N)/(N/N)) & \text{ if } H \subset N.\end{cases}
\end{myeq}
and the restriction $\res^N_e$ is $\Id.$ From this formula, we note that $\Phi_p^\ast$ commutes with $\Ext_L$ on the $\Z$-modules which are $0$ at $C_{p^n}/e$. 
\begin{exam}
The formula above implies $\Phi^\ast_N\uZ\cong \uZ$. Also, $\Phi_{p^m}^\ast (\uB_{T,S})\cong \uB_{T^{(m)}, S^{(m)}}$, where $T^{(m)}$ is the image of $T$ under the map $\ov{n-m} \to \ov{n}$ given by $r \mapsto r+m$. 
\end{exam}
\end{mysubsection}

We now point out a non-trivial extension of $\uZ$-modules that arise in equivariant cohomology computations over $C_{p^2}$. 
\[
\xymatrix{&\Z/p  \ar@/_1pc/[d]_{0} \ar[rrrr] & &&& \Z \oplus \Z/p \ar@/_1pc/[d]_{{\begin{bmatrix} 1 & 0 \\ 1 & 0 \end{bmatrix}}} \ar[rrrr] & & & & \Z  \ar@/_1pc/[d]_{1}  \\
\uB_{\ov{2},\{2\}} : &\Z/p \ar@/_1pc/[d]_{0} \ar@/_1pc/[u]_{1} \ar@/^1pc/[rrrr] &&& \uT(2): & \Z \oplus \Z/p \ar@/_1pc/[d]_{{\begin{bmatrix} p & 0 \end{bmatrix}}} \ar@/_1pc/[u]_{{\begin{bmatrix} p & 0 \\ -1 & 1 \end{bmatrix}}} \ar@/^1pc/[rrrr] & & &\uZ_{\{1\}}: & \Z  \ar@/_1pc/[d]_{p} \ar@/_1pc/[u]_{p}  \\
  &0  \ar@/_1pc/[u] \ar[rrrr] &&& & \Z \ar@/_1pc/[u]_{{\begin{bmatrix} 1 \\ 0 \end{bmatrix}}} \ar[rrrr] & & & & \Z  \ar@/_1pc/[u]_{1}  }
\]
 A change of basis gives another isomorphic form of $\uT(2)$ occuring in the diagram above. 
\[
\xymatrix{  \Z \oplus \Z/p \ar@/_1pc/[d]_{{\begin{bmatrix} 1 & 0 \\ 0 & 0 \end{bmatrix}}} \\
 \Z \oplus \Z/p \ar@/_1pc/[d]_{{\begin{bmatrix} p & 0 \end{bmatrix}}} \ar@/_1pc/[u]_{{\begin{bmatrix} p & 0 \\ 0 & 1 \end{bmatrix}}}   \\
  \Z \ar@/_1pc/[u]_{{\begin{bmatrix} 1 \\ -1 \end{bmatrix}}} }
\]
This generalizes to $C_{p^n}$ Mackey functors $\uT(n)$ obtained by repeating top part of the diagram above. This gives a non-trivial extension of Mackey functors 
\[
0 \to \uB_{\ov{n}, \ov{1}^c} \to \uT(n) \to \uZ_{\ov{1}} \to 0. 
\]

\section{Computations for large $C_p$-fixed point dimensions}\label{largedimcomp}

This section deals with computations of $\uH^{\alpha}_{C_{p^n}}(S^0, \uZ)$ when the dimension of the $C_p$-fixed points of $\alpha$ is large. More precisely, we prove that if $|\alpha^{C_p}| \in (-2n+2, 2n)^c$, then the Mackey functor $\uH^{\alpha}_{C_{p^n}}(S^0;\uZ)$ can be computed explicitly in terms of  $|\alpha|$ and $\uH_{C_{p^{n-1}}}^{\alpha^{C_p}}(S^0; \uZ)$ (see Table \ref{comp-highfix}). We now drop $\uZ$ in the notation to write $\uH^{\alpha}_{C_{p^n}}(S^0)$ for $\uH^{\alpha}_{C_{p^n}}(S^0;\uZ)$, and throughout assume $n\geq 2$.

\begin{lemma}\label{pullbackalpha}
Let $\alpha \in \im(RO(C_{p^n}/C_{p^m}) \to RO(C_{p^n}))$. Then there is an equivalence 
\[\uH^\alpha_{C_{p^n}}(S^0) \cong \Phi^\ast_{p^m}(\uH^{\alpha^{C_{p^m}}}_{C_{p^n}/C_{p^m}}(S^0)).
\]
\end{lemma}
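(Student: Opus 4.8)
The plan is to realize both sides as Bredon cohomology computed from the same equivariant CW structure, exploiting the fact that when $\alpha$ is pulled back from $RO(C_{p^n}/C_{p^m})$, the representation sphere $S^\alpha$ is itself pulled back from a $C_{p^n}/C_{p^m}$-spectrum along the quotient $q\colon C_{p^n}\to C_{p^n}/C_{p^m}$. First I would observe that $\alpha\in\im(RO(C_{p^n}/C_{p^m})\to RO(C_{p^n}))$ means $\alpha=q^\ast\beta$ for some $\beta\in RO(C_{p^n}/C_{p^m})$, and that under this identification $\alpha^{C_{p^m}}=\alpha$ as a virtual representation, while its underlying data is that of $\beta$. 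In particular $S^\alpha\simeq q^\ast S^\beta$ where $q^\ast$ denotes inflation of spectra along $q$, since inflation is symmetric monoidal and sends representation spheres to representation spheres.

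Next I would relate cohomology under inflation to the functor $\Phi^\ast_{p^m}$ on Mackey functors. The key input is the formula \eqref{Phi_Comp} together with the description of $\Phi^\ast_{p^m}$ as a left Kan extension along $\phi\colon\BB\uZ^{\op}_{C_{p^n}/C_{p^m}}\to\BB\uZ^{\op}_{C_{p^n}}$. I would argue that for a $C_{p^n}/C_{p^m}$-spectrum $Y$ with $H\uZ$-cohomology $\uH^\bullet_{C_{p^n}/C_{p^m}}(Y)$, the inflated spectrum $q^\ast Y$ satisfies $\uH^\bullet_{C_{p^n}}(q^\ast Y;\uZ)\cong\Phi^\ast_{p^m}(\uH^\bullet_{C_{p^n}/C_{p^m}}(Y;\uZ))$. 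This should follow because inflation along $q$ takes the Eilenberg--MacLane spectrum $H\uZ$ for $C_{p^n}/C_{p^m}$ to (a spectrum whose cohomology agrees with) $H\uZ$ for $C_{p^n}$ in the appropriate range — more precisely, the constant Mackey functor inflates to the constant Mackey functor, $\Phi^\ast_{p^m}\uZ\cong\uZ$ as noted in the excerpt — and because mapping spectra behave well under inflation on the subcategory of cells induced from $C_{p^n}/C_{p^m}$-sets. Concretely, I would build $S^\beta$ (equivalently $S(\lambda)$-type skeleta for the $C_{p^n}/C_{p^m}$-representations appearing in $\beta$) out of cells $({C_{p^n}/C_{p^m}})/(L/C_{p^m})_+$, apply $q^\ast$ to get cells ${C_{p^n}/L}_+$, and compare the two cellular cochain complexes: the $C_{p^n}$-complex is obtained from the $C_{p^n}/C_{p^m}$-complex by applying $\Phi^\ast_{p^m}$ levelwise, using that $\Phi^\ast_{p^m}$ is exact on the relevant Mackey functors (it is a left Kan extension that one checks preserves the short exact sequences coming from cofiber sequences of induced cells).

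Then I would apply this with $Y=S^0$: $q^\ast S^0 = S^0$ as a $C_{p^n}$-spectrum, but the grading matters, so really I apply it with $Y=S^{-\beta}$, giving $\uH^\alpha_{C_{p^n}}(S^0;\uZ)=\uH^0_{C_{p^n}}(q^\ast S^{-\beta};\uZ)=\uH^0_{C_{p^n}}(S^{-\alpha};\uZ)$ and comparing with $\Phi^\ast_{p^m}(\uH^0_{C_{p^n}/C_{p^m}}(S^{-\beta};\uZ))=\Phi^\ast_{p^m}(\uH^\beta_{C_{p^n}/C_{p^m}}(S^0;\uZ))$. Finally I would identify $\uH^\beta_{C_{p^n}/C_{p^m}}(S^0;\uZ)$ with $\uH^{\alpha^{C_{p^m}}}_{C_{p^n}/C_{p^m}}(S^0;\uZ)$, which is just the statement that under $RO(C_{p^n}/C_{p^m})\hookrightarrow RO(C_{p^n})$ the class $\beta$ corresponds to the fixed-point part $\alpha^{C_{p^m}}$ of $\alpha$; this is immediate from the definition of the restriction-to-fixed-points map on representations.

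The main obstacle I expect is the second step: carefully justifying that inflation of spectra along $q$ induces $\Phi^\ast_{p^m}$ on $H\uZ$-cohomology. One has to be careful that inflation is \emph{not} the same as the geometric fixed-point or restriction functors, and that $q^\ast H\uZ_{C_{p^n}/C_{p^m}}$ is not literally $H\uZ_{C_{p^n}}$ as spectra (their homotopy Mackey functors differ at subgroups contained in $C_{p^m}$); what saves the argument is that we only ever smash against cells induced from $C_{p^n}/C_{p^m}$-sets, where the discrepancy is invisible, exactly as encoded in formula \eqref{Phi_Comp} with its two cases $N\subseteq H$ and $H\subset N$. I would make this precise either by a direct cellular chain-level comparison as sketched, or by citing the compatibility of $\Phi^\ast_{p^m}=Lan_\phi$ with the bar-resolution model for Bredon cohomology; the exactness of $\Phi^\ast_{p^m}$ on the Mackey functors appearing as cellular cochains (which are sums of $\uZ$'s and their duals and the $\uB_{T,S}$'s) is the technical point to nail down.
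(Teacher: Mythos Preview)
Your approach is viable but takes a different and more laborious route than the paper. The paper's proof is a two-line application of the change-of-groups adjunction: since $S^{-\alpha}=q^\ast S^{-\alpha^{C_{p^m}}}$, one has
\[
\{q^\ast S^{-\alpha^{C_{p^m}}}, H\uZ\}^{C_{p^n}} \cong \{S^{-\alpha^{C_{p^m}}}, (H\uZ)^{C_{p^m}}\}^{C_{p^n}/C_{p^m}},
\]
and the key input is the equivalence $(H\uZ)^{C_{p^m}}\simeq H\uZ$ of $C_{p^n}/C_{p^m}$-spectra. The identification of restrictions and transfers is then a routine check (referenced to \cite{BG20}). In other words, the paper moves $H\uZ$ across the adjunction rather than $S^{-\alpha}$, so the issue you flag---that $q^\ast H\uZ_{C_{p^n}/C_{p^m}}\not\simeq H\uZ_{C_{p^n}}$---never arises.

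Your cellular comparison can be made to work: the cells of $q^\ast S^{-\beta}$ are all of the form $C_{p^n}/L_+$ with $L\supseteq C_{p^m}$, the corresponding representable $\uZ$-modules satisfy $\Phi^\ast_{p^m}(\uZ[(C_{p^n}/C_{p^m})/(L/C_{p^m})])\cong \uZ[C_{p^n}/L]$ by the Kan-extension universal property, and $\Phi^\ast_{p^m}$ is exact (immediate from the explicit formula \eqref{Phi_Comp}). So the cellular cochain complexes match under $\Phi^\ast_{p^m}$. What the paper's route buys you is that all of this is packaged into a single adjunction isomorphism plus the identification of $(H\uZ)^{C_{p^m}}$, avoiding any need to choose cell structures or verify exactness by hand.
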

\begin{proof}
The hypothesis implies that $\alpha$ and hence, also $S^{-\alpha}$ are $C_{p^m}$-fixed. Suppose $q:C_{p^n} \to C_{p^n}/C_{p^m}$, which induces adjunctions \cite[Proposition V.3.10]{MM02} 
\[ 
\{ q^\ast(S^{-\alpha^{C_{p^m}}}), H\uZ\}^{C_{p^n}} \cong \{S^{-\alpha^{C_{p^m}}}, H\uZ^{C_{p^m}} \}^{C_{p^{n}}/C_{p^m}}. 
\] 
The result now follows from $H\uZ^{C_{p^m}} \simeq H\uZ$, and a calculation of the restriction and transfers as in the proof of \cite[Proposition 6.12]{BG20}.
%
%
%
 \end{proof}

Now apply Lemma \ref{pullbackalpha} together with the fact that there are invertible classes in degree $\lambda(rp^k)-\lambda(p^k)$ for $p\nmid r$. This implies that the conclusion is valid once $|\alpha^K|$ are all equal for $K\leq C_{p^m}$. Further applying the techniques of Example \ref{lambda0}, the hypothesis may be weakened to the fact $|\alpha^K|$ are all of the same sign for $K\leq C_{p^m}$.

\begin{prop}\label{signs}
Suppose $\alpha \in RO(C_{p^n})$ such that the  collection of numbers $|\alpha^{C_{p^k}}|$ for $k\leq m$, are either all  $0$ or are all of the same sign, then $\uH^\alpha_{C_{p^n}}(S^0) \cong \Phi^\ast_{p^m}(\uH^{\alpha^{C_{p^m}}}_{C_{p^n}/C_{p^m}}(S^0))$.
\end{prop}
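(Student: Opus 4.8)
The plan is to reduce Proposition \ref{signs} to Lemma \ref{pullbackalpha} by a sequence of degree-shifting arguments, using invertible classes to collapse the magnitudes $|\alpha^{C_{p^k}}|$ and using the $a_{\lambda_0}$-machinery of Example \ref{lambda0} to reduce to the case where all these magnitudes vanish. First I would set up the inductive skeleton: it suffices to prove that $\uH^\alpha_{C_{p^n}}(S^0)$ depends only on $\alpha^{C_{p^m}}$ in the sense that the natural map ``multiply by the appropriate class'' relating $\uH^\alpha$ to $\uH^{\alpha'}$ is an isomorphism whenever $\alpha,\alpha'$ have the same fixed-point data $\alpha^{C_{p^m}}$ and satisfy the sign hypothesis; once such an isomorphism onto a representative with $\alpha = \alpha^{C_{p^m}}$ (no $\lambda_i$ for $i<m$) is established, Lemma \ref{pullbackalpha} finishes the job.

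The key steps, in order. Step 1: Using the invertible classes in degree $\lambda(rp^k)-\lambda(p^k)$ for $p\nmid r$ (the proposition preceding Definition \ref{defn} of $\uZ_S$, together with the discussion normalizing $\alpha$ to a combination of $1,\lambda_0,\dots,\lambda_{n-1}$), we may assume $\alpha = c + \sum_{k\ge 0} a_k\lambda_k$. The contribution to $\alpha^{C_{p^k}}$ from $\lambda_j$ is $2$ if $j\ge k$ and $0$ if $j<k$; so $|\alpha^{C_{p^k}}| = c + 2\sum_{j\ge k} a_j$. Step 2: Treat the case where all the $|\alpha^{C_{p^k}}|$, $k\le m$, are positive. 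Here I want to peel off the classes $a_{\lambda_0},\dots,a_{\lambda_{m-1}}$ one at a time, showing multiplication by each is an isomorphism on cohomology in the relevant range. This is where one invokes \ref{eq:base-m} (the long exact sequence for \eqref{sphere} with $\lambda_m$ replaced by $\lambda_j$, $j<m$) combined with Proposition \ref{sph_coh}: the error terms $\uH^\ast_{C_{p^n}}(S(\lambda_j)_+)$ are built via $\Theta_j,\Theta^\ast_j$ from $\uH^\ast_{C_{p^j}}(S^0)$ evaluated at a grading with $C_p$-fixed dimension controlled by $|\alpha^{C_{p^j}}|$, and the positivity hypothesis forces those groups to vanish in the two consecutive degrees that obstruct the isomorphism. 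Step 3: Treat the case where all $|\alpha^{C_{p^k}}|$ are negative by the dual argument — either run the same cofiber-sequence analysis on the other side, or apply Anderson duality \eqref{anders-comp} to transport to the positive case (note $3-\lambda_0-\alpha$ flips the sign of the fixed-point dimensions). Step 4: In the case all $|\alpha^{C_{p^k}}| = 0$ for $k\le m$, the invertible-class normalization already reduces to $\alpha$ being pulled back from $RO(C_{p^n}/C_{p^m})$, and Lemma \ref{pullbackalpha} applies directly; in the other two cases, after Steps 2--3 we have an isomorphism onto $\uH^{\alpha^{C_{p^m}}}_{C_{p^n}}(S^0)$ with $\alpha^{C_{p^m}}$ pulled back, so Lemma \ref{pullbackalpha} again concludes.

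The main obstacle I expect is Step 2, specifically the bookkeeping that guarantees the obstruction groups genuinely vanish. One has to verify that when $|\alpha^{C_{p^j}}|$ is positive (hence, after accounting for how the $\lambda_k$ with $k>j$ contribute, ``large enough'' in the precise sense needed), the Mackey functor $\uH^{\alpha'}_{C_{p^j}}(S^0)$ entering through $\Theta_j$ and $\Theta^\ast_j$ in Proposition \ref{sph_coh} vanishes in both degrees $\alpha$ and $\alpha+\lambda_j-1$ that appear in \ref{eq:base-m}; this is essentially an induction on $n$ (the group $C_{p^j}$ is smaller) and requires care because the grading $\alpha'$ restricted to $C_{p^j}$ still carries $\lambda$-components, so one is really citing the inductive form of the whole statement rather than a naive vanishing. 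The subtlety at the single boundary degree — the analogue of the ``injectivity at $|\alpha|=-1$'' remark in Example \ref{lambda0}, handled there by the argument of \cite[Proposition 4.5]{BG20} — is the place where a little extra input beyond dimension-counting is needed, and I would isolate it as a sub-lemma.
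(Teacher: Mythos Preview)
Your proposal is correct and matches the paper's approach: the paper's proof is the short paragraph preceding the proposition, which invokes exactly Lemma \ref{pullbackalpha}, the invertible classes, and the techniques of Example \ref{lambda0}, and your Steps 1--4 are a detailed unpacking of that sketch. One organizational simplification worth noting: since Example \ref{lambda0} fully analyzes only $a_{\lambda_0}$, you can use $a_{\lambda_0}$ alone to reduce to $a_0=0$, then apply Lemma \ref{pullbackalpha} at $m=1$ to pass to $C_{p^{n-1}}$ and iterate --- this avoids having to control the $a_{\lambda_j}$ error terms for $j>0$ via Proposition \ref{sph_coh} and the inductive vanishing you flag as the main obstacle.
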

%
%
We now readily deduce that in a range of degrees, the cohomology is $0$, starting from the fact that for the trivial group, the cohomology is $0$ if the degree is non-zero. 
\begin{prop}\label{cohzero} 
Let $\alpha \in RO(C_{p^n})$.\\
(a) If $|\alpha^{C_{p^m}}|$ positive for all $m$, or negative for all $m$, then, 
$
\uH^{\alpha}_{C_{p^n}}(S^0) \cong \uO.
$ \\ 
(b) If $|\alpha^{C_{p^m}}|\le 1$ for all $m,$ then,  $\uH^{\alpha}_{C_{p^n}}(S^0)\cong \uO.$\\
(c) For $\alpha$ odd satisfying  $|\alpha^{C_{p^m}}| < 1 \implies |\alpha^H| \le 1$ $\forall$ $C_{p^m} \subseteq H$, $\uH^{\alpha}_{C_{p^n}}(S^0 )\cong \uO.$ 
\end{prop}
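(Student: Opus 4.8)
My plan is to reduce everything to Proposition \ref{signs} — which collapses $\uH^{\alpha}_{C_{p^n}}(S^0)$ to a $\Phi^{\ast}$ of cohomology over a proper quotient as soon as the dimensions $|\alpha^{C_{p^k}}|$ share a common sign — in combination with the long exact sequences \ref{eq:base-m} and the computation of $\uH^{\bullet}_{C_{p^n}}(S(\lambda_m)_+)$ from Proposition \ref{sph_coh}; the ultimate base case is the trivial-group fact $\uH^{\beta}_{e}(S^0)\cong\widetilde{H}^{|\beta|}(S^0;\Z)$, which is $\uO$ unless $|\beta|=0$. For part (a) this is immediate: the hypothesis says the $|\alpha^{C_{p^k}}|$, $0\le k\le n$, are all nonzero of a single sign, so Proposition \ref{signs} applies with $m=n$ and gives $\uH^{\alpha}_{C_{p^n}}(S^0)\cong\Phi^{\ast}_{p^{n}}\big(\uH^{\alpha^{C_{p^n}}}_{C_{p^n}/C_{p^n}}(S^0)\big)$; since $C_{p^n}/C_{p^n}$ is trivial and $|\alpha^{C_{p^n}}|\neq 0$, the inner term is $\uO$, and $\Phi^{\ast}_{p^n}$ preserves $\uO$.

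For parts (b) and (c) I would induct on $n$, taking Lewis's $C_p$-computation \cite{Lew88} as the base case. For the inductive step I would feed $\alpha$ into the $a_{\lambda_m}$-multiplication sequence \ref{eq:base-m} for a suitably chosen $m$,
\[
\cdots\to \uH^{\alpha+\lambda_m-1}_{C_{p^n}}(S(\lambda_m)_+)\to \uH^{\alpha}_{C_{p^n}}(S^0)\xrightarrow{\ \cdot\, a_{\lambda_m}\ }\uH^{\alpha+\lambda_m}_{C_{p^n}}(S^0)\to \uH^{\alpha+\lambda_m}_{C_{p^n}}(S(\lambda_m)_+)\to\cdots,
\]
and argue that the two $S(\lambda_m)_+$-terms vanish. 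By Proposition \ref{sph_coh} these are built, through the exact, $\uO$-reflecting functors $\Theta_m,\Theta_m^{\ast}$, out of $\uH^{\bullet}_{C_{p^m}}(S^0)$ at the degrees $\alpha-1,\alpha,\alpha+\lambda_m-1,\alpha+\lambda_m$ restricted to $C_{p^m}$; since $\lambda_m$ has $C_{p^k}$-fixed dimension $2$ for $k\le m$ and $0$ for $k>m$, and restriction to $C_{p^m}$ only remembers the levels $k\le m$, the hypothesis on $\alpha$ forces those four degrees into a range already covered either by the inductive hypothesis of (b)/(c) or by the already-proved part (a). Hence $a_{\lambda_m}$-multiplication is an isomorphism $\uH^{\alpha}_{C_{p^n}}(S^0)\xrightarrow{\sim}\uH^{\alpha+\lambda_m}_{C_{p^n}}(S^0)$, and telescoping such isomorphisms moves $\alpha$ — by steps that alter only the fixed dimensions at levels $k\le m$ — into the single-sign regime of part (a), where the answer is $\uO$; so it was $\uO$ from the start.

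When the telescope is blocked by the exceptional integral degrees $|\alpha|\in\{-2,-1,0\}$ of Example \ref{lambda0}, I would first trade $\uH^{\alpha}_{C_{p^n}}(S^0)$ for $\Ext_L\big(\uH^{3-\lambda_0-\alpha}_{C_{p^n}}(S^0),\Z\big)$ via Anderson duality \eqref{anders-comp}, which turns the upper bounds on fixed dimensions into lower bounds and returns the problem to part (a); in part (c) the assumption that $\alpha$ is odd is exactly what makes the relevant $\uH^{\bullet}_{C_{p^n}}(S(\lambda_0)_+)$ of Example \ref{lambda0} vanish, so the $\lambda_0$-telescope runs without stalling. The hard part will be precisely this boundary bookkeeping: guaranteeing, for every $\alpha$ in the hypothesis region, that the chain of $a_{\lambda_m}$-isomorphisms reaches a degree controlled by part (a) before meeting an exceptional degree, while simultaneously every auxiliary $C_{p^m}$-cohomology group produced by Proposition \ref{sph_coh} already lies in the inductively known vanishing range — choosing the right $\lambda_m$ at each stage, and recognising when to detour through Anderson duality, is where the genuine content sits.
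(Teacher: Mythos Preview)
Your plan for (a) matches the paper exactly. For (b) and (c) your ingredients are the right ones, but the paper deploys them far more directly, and one step of your plan does not work as stated.

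For (b) the paper uses no induction and no telescope at all: Anderson duality \eqref{anders-comp} is the entire argument, not a fallback. If every $|\alpha^{C_{p^m}}|\le 1$ then $|(3-\lambda_0-\alpha)^H|=3-|\alpha^H|\ge 2$ for $H\neq e$ and $|(3-\lambda_0-\alpha)|=1-|\alpha|$; when $|\alpha|\le -1$ all these are positive and (a) kills $\uH^{3-\lambda_0-\alpha}$ outright, while the boundary case $|\alpha|=1$ needs one extra line (the dual degree has $|\cdot|=0$, giving $\uZ^\ast$, whose $\Ext_L$ vanishes). Your telescope cannot reach the single-sign regime of (a) on its own: multiplication by any $a_{\lambda_m}$ with $m<n$ leaves $|\alpha^{C_{p^n}}|$ untouched, so that top dimension never moves. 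The Anderson-duality ``detour'' you mention is in fact the main road.

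For (c) the paper also inducts on $n$, but the inductive step is Proposition~\ref{signs} with $m=1$, passing from $C_{p^n}$ to $C_{p^{n-1}}$, rather than telescoping inside $C_{p^n}$ until one lands in (a). Concretely: since $\alpha$ is odd, $a_{\lambda_0}$ is an isomorphism away from $|\alpha|=-1$ and surjective there, so one may freely adjust $|\alpha|$; once $|\alpha|$ and $|\alpha^{C_p}|$ share a sign, Proposition~\ref{signs} gives $\uH^{\alpha}_{C_{p^n}}(S^0)\cong\Phi^\ast_p\uH^{\alpha^{C_p}}_{C_{p^{n-1}}}(S^0)$ and the induction hypothesis finishes. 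The only residual case is $|\alpha|=-1$, $|\alpha^{C_p}|=1$, handled by one application of Anderson duality followed by Proposition~\ref{signs} again. This sidesteps entirely the ``boundary bookkeeping'' you identify as the hard part: you never need the $S(\lambda_m)_+$-terms for $m\ge 1$, and hence never need to check that the auxiliary $C_{p^m}$-degrees $\alpha+1,\alpha+2$ lie in the inductive range (they generally do not, as a quick look at (b) for $\alpha+1$ shows).
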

\begin{proof}
Proposition \ref{signs} directly implies (a). For (b), the result follows from \eqref{anders-comp} and  (a) as,   $|\alpha|\le -1, |\alpha^H|\leq 1 $ $ \implies |(3-\lambda_0 - \alpha)^H| >0$ for all $H$. But for $|\alpha|= 1$, $|(3-\lambda_0-\alpha)|=0$, and all the other fixed point dimensions are $>0$. Using ~\myTagFormat{eq:base}{0}  along with (a) imply $\uH^{3-\lambda_0-\alpha}_{C_{p^n}}(S^0)\cong \uZ^\ast$, so that $\Ext_L$ is trivial. Hence (b)  follows.

For (c), Example \ref{lambda0} implies that $a_{\lambda_0}$ multiplication is an isomorphism if $|\alpha|\neq -1$ and is surjective if $|\alpha|=-1$. We now assume that (c) is true for $G=C_{p^k}$, $k< n$. For $|\alpha|>0$ and $|\alpha^{C_p}|>0$, Proposition \ref{signs} applies to prove the result. For $|\alpha|>0$ and $|\alpha^{C_p}|<0$, using the $a_{\lambda_0}$-multiplication, we get a surjective map from $\uH^\beta_{C_{p^n}}(S^0)$ with $|\beta|<0$ and $|\beta^{C_{p^l}}|=|\alpha^{C_{p^l}}|$ for $l>1$, so that Proposition \ref{signs} implies the result again. In the remaining cases, using the $a_{\lambda_0}$-multiplication, it suffices to prove for $|\alpha|=-1$, $|\alpha^{C_p}|=1$. The proof is now completed by repeated applications of Proposition \ref{signs} and Anderson duality\eqref{anders-comp} as 
\[
\uH^{\alpha}_{C_{p^n}}(S^0)  \cong \Ext_L(\uH^{3-\lambda_0 -\alpha}_{C_{p^n}}(S^0), \Z) \cong \Ext_L( \Phi_p^\ast \uH^{3-\alpha^{C_p}}_{C_{p^{n-1}}}(S^0),\Z) \cong \Phi_p^\ast \uH^{\alpha^{C_p} - \lambda_0}_{C_{p^{n-1}}}(S^0) \cong 0.
\]
\end{proof}

Proposition \ref{cohzero} has many applications in showing that certain even equivariant cell complexes have free cohomology. Examples include complex projective spaces and Grassmannians \cite{Lew88}, \cite{BG19}, and linear actions on simply connected $4$-manifolds \cite{BDK21}. 

\begin{prop}\label{comp_1}
Suppose $\alpha \in RO_0(C_{p^n})$ satisfying $|\alpha^{C_{p^k}}|>0$ for $k>1$. Then, 
\[\uH^\alpha_{C_{p^n}}(S^0)\cong  \begin{cases} \uZ_{\ov{1-\frac{|\alpha^{C_p}|}{2}}^c} & \text{ if } 4-2n \le |\alpha^{C_p}| \le 0\\ 
\uZ & \text{ if } |\alpha^{C_p}| \le 2-2n \\
 \uZ^\ast & \text{ if }|\alpha^{C_p}| \ge 2. \end{cases}
\]
\end{prop}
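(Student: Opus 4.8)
The plan is to run an induction on $n$, combining the Anderson duality relation \eqref{anders-comp} with the long exact sequences \ref{eq:base-m} and, crucially, with the pullback identification of Proposition \ref{signs} applied at $m = 1$. The hypothesis $|\alpha^{C_{p^k}}| > 0$ for all $k > 1$ means that the only ``active'' fixed-point dimensions are $|\alpha|$ (which is $0$ since $\alpha \in RO_0$) and $|\alpha^{C_p}|$; this is exactly the situation in which the cohomology over $C_{p^n}$ should be controlled by the cohomology over $C_{p^n}/C_p \cong C_{p^{n-1}}$ of the pulled-back grading, together with the $C_p$-layer coming from the $a_{\lambda_0}$-isotropy separation sequence.

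\emph{Step 1: reduce to the two boundary cases via $a_{\lambda_0}$-multiplication.} Using Example \ref{lambda0} and the sequence \ref{eq:base-m} at $m = 0$, multiplication by $a_{\lambda_0}$ is an isomorphism on $\uH^\alpha_{C_{p^n}}(S^0)$ as long as $|\alpha| \notin \{-2,-1,0\}$, and for $|\alpha| \in \{-1,0\}$ it is surjective with kernel/cokernel governed by \eqref{lambda0coh}. Since our $\alpha$ has $|\alpha| = 0$, I would move along the $\lambda_0$-line: the grading $\alpha$ is related by $a_{\lambda_0}$-multiplication to gradings $\beta$ with $|\beta| \neq 0$ but $|\beta^{C_p}| = |\alpha^{C_p}|$ and $|\beta^{C_{p^k}}| = |\alpha^{C_{p^k}}|$ for $k > 1$ unchanged. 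For these, \eqref{anders-comp} is available. The bookkeeping here is to track how the subset $\ov{1 - |\alpha^{C_p}|/2}^c$ transforms — since $a_{\lambda_0}$ only affects the bottom ($C_{p^n}/e$ vs $C_{p^n}/C_p$) comparison, it should at worst toggle whether $1 \in S$, consistent with the three cases in the statement.

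\emph{Step 2: identify the $C_{p^{n-1}}$-part by pullback.} When $|\alpha^{C_p}| \geq 2$ (so $|\alpha^{C_{p^k}}|$ are all positive for $k \geq 1$ — here one needs $|\alpha^{C_p}| \geq 2 \Rightarrow |\alpha^{C_{p^k}}|$ nonnegative, using that passing to larger subgroups only decreases the negative contributions; this monotonicity is where I'd be careful), Proposition \ref{signs} with $m = 1$ gives $\uH^\alpha_{C_{p^n}}(S^0) \cong \Phi^\ast_p(\uH^{\alpha^{C_p}}_{C_{p^{n-1}}}(S^0))$, and the inner group has all positive fixed-point dimensions, hence is $\uO$ by Proposition \ref{cohzero}(a) — wait, that gives $\uO$, not $\uZ^\ast$. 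So instead I must go through Anderson duality: $\uH^\alpha \cong \Ext_L(\uH^{3-\lambda_0-\alpha}, \Z)$, and $3 - \lambda_0 - \alpha$ has $|(3-\lambda_0-\alpha)^{C_p}| = 1 - |\alpha^{C_p}| \leq -1$ while the other fixed dimensions stay positive, so Proposition \ref{signs} identifies $\uH^{3-\lambda_0-\alpha}_{C_{p^n}}(S^0) \cong \Phi^\ast_p \uH^{(3-\alpha^{C_p})}_{C_{p^{n-1}}}(S^0)$; by induction this inner Mackey functor is computed, and since $\Phi^\ast_p$ commutes with $\Ext_L$ on $\Z$-modules vanishing at $C_{p^n}/e$ (noted after \eqref{Phi_Comp}), one gets $\uH^\alpha \cong \Phi^\ast_p \uZ = \uZ$ or $\Phi^\ast_p \uZ^\ast = \uZ^\ast$ depending on the sign — the case $|\alpha^{C_p}| \geq 2$ should land on $\uZ^\ast$ after tracing $\Ext_L(\uZ,\Z) = \uZ^\ast$-type identities (using $\uZ_{S^c}$ as the $\Ext$ of $\uB$'s). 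Similarly $|\alpha^{C_p}| \leq 2 - 2n$ forces the dual inner grading into the range where $\uH_{C_{p^{n-1}}} = \uZ$ by Proposition \ref{cohzero}(a), giving $\uZ$.

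\emph{Step 3: the middle range $4 - 2n \leq |\alpha^{C_p}| \leq 0$.} This is the main obstacle. Here $\uH^{\alpha^{C_p}}_{C_{p^{n-1}}}(S^0)$ is itself of the form $\uZ_{\ov{1-|\alpha^{C_p}|/2}^c}$ by the inductive hypothesis applied one level down (the shift of subset indices by one matches the formula $\Phi^\ast_p(\uB_{T,S}) = \uB_{T^{(1)},S^{(1)}}$ and $\Phi^\ast_p \uZ_S$). The subtlety is that $\alpha$ is \emph{not} $C_p$-fixed, so Proposition \ref{signs} does not directly apply; instead I would use the long exact sequence \ref{eq:base-0} together with Proposition \ref{sph_coh} at $m = 1$ — or rather, go through Anderson duality once more: $3 - \lambda_0 - \alpha$ has $C_p$-fixed dimension $1 - |\alpha^{C_p}| \in [1, 2n-3]$, an \emph{odd} positive number, and for odd gradings with a mix of signs one hopes Proposition \ref{cohzero}(c) or a direct splicing argument shows $\uH^{3-\lambda_0-\alpha}_{C_{p^n}}(S^0)$ is again a single $\uZ_T$ (torsion-free), whence $\Ext_L$ of it is $\uZ_{T^c}$ and one reads off $T^c = \ov{1 - |\alpha^{C_p}|/2}^c$. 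Verifying that the relevant cohomology in the dual odd grading is torsion-free (no $\uB$-summands), so that Proposition \ref{zb} pins it down, is the delicate point; I expect this to require the hypothesis $|\alpha^{C_{p^k}}| > 0$ for $k > 1$ in an essential way, ruling out the $\uT(n)$-type non-split extensions that appear elsewhere in the paper. Once torsion-freeness is in hand, the subset bookkeeping is routine and the three cases assemble from Steps 1–2.
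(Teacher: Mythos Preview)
Your Step 3 contains a genuine gap that breaks the argument. You propose to handle the middle range $4-2n\le |\alpha^{C_p}|\le 0$ by Anderson duality: show $\uH^{3-\lambda_0-\alpha}_{C_{p^n}}(S^0)$ is a single $\uZ_T$ and then take $\Ext_L$ to get $\uZ_{T^c}$. This fails for two independent reasons. First, $|3-\lambda_0-\alpha|=1\neq 0$, so by Corollary~\ref{cortors} the Mackey functor $\uH^{3-\lambda_0-\alpha}_{C_{p^n}}(S^0)$ is torsion and therefore cannot be any $\uZ_T$. Second, even for a torsion-free $\uZ_T$ one has $\Ext_L(\uZ_T,\Z)=0$ levelwise; the identity you want, namely $\uZ_T\mapsto\uZ_{T^c}$, is the effect of $\Hom_L(-,\Z)$, not $\Ext_L$. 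Since $|\alpha|=0$, the reduced form \eqref{anders-comp} is unavailable and you would need the full sequence \eqref{and_comp}, whose $\Hom_L$ term involves $\uH^{2-\lambda_0-\alpha}_{C_{p^n}}(S^0)$; that grading has $|\cdot|=0$ again, with $C_{p^k}$-fixed dimensions outside the hypotheses of the proposition, so you land in a computation at least as hard as the one you started with. Your Step~1 ``move along the $\lambda_0$-line first'' does not resolve this: after one $a_{\lambda_0}$ you must compute $\uH^{3-2\lambda_0-\alpha}$, which has mixed-sign fixed dimensions and is not covered by Proposition~\ref{signs} or the inductive hypothesis on $n$. (There is also an arithmetic slip: $|(3-\lambda_0-\alpha)^{C_p}|=3-|\alpha^{C_p}|$, not $1-|\alpha^{C_p}|$.)

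The paper's proof is organized quite differently. It inducts on $F(\alpha)=1-\tfrac{|\alpha^{C_p}|}{2}$ (so on the value of $|\alpha^{C_p}|$, not on $n$), and uses the two cofibre sequences \ref{eq:base-m} at $m=0$ and $m=1$ in tandem. Given $\beta$ with $F(\beta)=k$, one first computes $\uH^{\beta+\lambda_1}_{C_{p^n}}(S^0)\cong\uB_{\ov{n},\ov{k-1}^c}$ from the $m=0$ sequence at $\beta+\lambda_1-\lambda_0$ and the inductive hypothesis; then the $m=1$ sequence at $\beta$, combined with Proposition~\ref{sph_coh} and Proposition~\ref{Bker} to identify the kernel of $\uB_{\ov{n},\ov{k-1}^c}\to\uB_{\ov{1},\emptyset}$, yields a short exact sequence
\[
\uO\to\uZ_{\ov{1}^c}\to\uH^\beta_{C_{p^n}}(S^0)\to\uB_{\ov{1}^c,\ov{k}^c}\to\uO.
\]
Torsion-freeness of $\uH^\beta$ is then established by exhibiting an injection into $\uH^\beta(S(\lambda_0)_+)\cong\uZ$ via the $m=0$ sequence and Proposition~\ref{signs}, after which Proposition~\ref{zb} pins down $\uH^\beta\cong\uZ_{\ov{k}^c}$. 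The ingredient you are missing is precisely this two-sequence maneuver together with Proposition~\ref{Bker}; Anderson duality plays no role in the inductive step. Your Step~2 case $|\alpha^{C_p}|\ge 2$ is also handled more directly in the paper: one step of the $m=0$ sequence together with Proposition~\ref{cohzero}(a) gives $\uH^\alpha\cong\uZ^\ast$ immediately, without any detour through duality or pullback.
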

\begin{proof}
From the proof of Proposition \ref{cohzero} (b), observe that $\uH^\alpha_{C_{p^{n}}}(S^0) \cong \uZ^\ast$ for $|\alpha^{C_p}|>0$. We use induction on $F(\alpha)= 1-\frac{|\alpha^{C_p}|}{2}$. 
We now assume the result for $F(\alpha)<k$, and prove this first for those with $F(\alpha)=k\leq n$.   

Let $\beta\in RO_0(C_{p^n})$  with $|\beta^{C_{p^k}}|>0$ for $k>1$. Note that $F(\beta+\lambda_1 -\lambda_0) =F(\beta)-1$ and $\beta+\lambda_1-\lambda_0 \in RO_0(C_{p^n})$, so that it satisfies the induction hypothesis. Hence, using ~\myTagFormat{eq:base}{0} for index $\beta+\lambda_1-\lambda_0$, we obtain the long exact sequence
\[
\cdots \uH^{\beta+\lambda_1-1}_{C_{p^n}}(S(\lambda_0)_+)\to \uH^{\beta+\lambda_1-\lambda_0}_{C_{p^n}}(S^0) \to \uH^{\beta+\lambda_1}_{C_{p^n}}(S^0) \to \uH^{\beta+\lambda_1}_{C_{p^n}}(S(\lambda_0)_+) \cdots
\]

In the above, we have \eqref{lambda0coh} $\uH^{\beta+\lambda_1}_{C_{p^n}}(S(\lambda_0)_+)=0$ and $\uH^{\beta+\lambda_1-1}_{C_{p^n}}(S(\lambda_0)_+) =\uZ^\ast.$ The term $\uH^{\beta+\lambda_1-\lambda_0}_{C_{p^n}}(S^0) \cong \uZ_{\ov{F(\beta)-1}^c}$. Hence \eqref{nz_ext1}, $\uH^{\beta + \lambda_1}_{C_{p^n}}(S^0) \cong \uB_{\ov{n}, \ov{F(\beta)-1}^c}$.
Making use of ~\myTagFormat{eq:base}{1}, we obtain
\[
\xymatrix@R=0.3cm{0 \ar[r] & \uH^{\beta+\lambda_1-1}_{C_{p^n}}(S(\lambda_1)_+)\ar[r]  
\ar@{=}[d]^{(\text{by }  \ref{sph_coh})}  & \uH^{\beta}_{C_{p^n}}(S^0) \ar[r]  &\uH^{\beta+\lambda_1}_{C_{p^n}}(S^0)\ar@{=}[d] \ar[r] & \uH^{\beta+\lambda_1}_{C_{p^n}}(S(\lambda_1)_+)\ar@{=}^{(\text{by }  \ref{sph_coh})}[d]\cdots \\ & \uZ_{\ov{1}^c} &   &  \uB_{\ov{n},\ov{F(\beta)-1}^c}&  \uB_{\ov{1}, \emptyset}}
\]
The identifications at the two ends of the sequence are $\uH^{\beta+\lambda_1}_{C_{p^n}}(S(\lambda_1)_+)\cong \Theta_1(\uB_{\ov{1}, \emptyset})\cong \uB_{\ov{1}, \emptyset}$, and $\uH^{\beta+\lambda_1-1}_{C_{p^n}}(S(\lambda_1)_+) \cong \Theta^\ast_1(\uZ) \cong \uZ_{\ov{1}^c}.$ Proposition \ref{Bker} computes  the kernel of $\uH^{\beta+\lambda_1}_{C_{p^n}}(S^0) \to \uH^{\beta+\lambda_1}_{C_{p^n}}(S(\lambda_1)_+)$ as $\uB_{\ov{1}^c,\ov{F(\beta)}^c}.$  Hence we deduce the short exact sequence
\begin{myeq}\label{ind_step}
\uO \to \uZ_{\ov{1}^c}\to \uH^{\beta}_{C_{p^n}}(S^0) \to \uB_{\ov{1}^c,\ov{F(\beta)}^c} \to \uO.
\end{myeq}
Using ~\myTagFormat{eq:base}{0} and Corollary \ref{signs}, it follows $\uH^{\beta}_{C_{p^n}}(S^0) \to \uH^{\beta}_{C_{p^n}}(S(\lambda_0)_+)\cong \uZ$ is injective, hence $\uH^{\beta}_{C_{p^n}}(S^0)$ is torsion free. Thus \eqref{ind_step} along with Proposition \ref{zb} imply the middle term $\uH^{\beta}_{C_{p^n}}(S^0) \cong \uZ_{\ov{F(\beta)}^c}$. 

If $F(\alpha)=n$, then by the above computation $\uH^\alpha_{C_{p^n}}(S^0; \uZ)\cong \uZ.$ Working as above we get, $\uH^{\alpha+\lambda_0}_{C_{p^n}}(S^0)\cong \uB_{\ov{n}, \emptyset}$ (using ~\myTagFormat{eq:base}{0}),  and that the kernel $\uB_{\ov{n},\emptyset} \to \uB_{\ov{1},\emptyset}$ is $\uB_{\ov{1}^c,\emptyset}$. Analogously we obtain $\uH^{\alpha-\lambda_1 + \lambda_0}_{C_{p^n}}(S^0) \cong \uZ$. 
\end{proof}

\begin{table}[ht]

\begin{tabular}{ |p{3.8cm}|p{3.7cm}||p{3.8cm}| p{3.7cm}|  }
 \hline
{ \small $|\alpha|$ \text{ even}   } & { \small  $\uH_{C_{p^n}}^\alpha(S^0)$} &  { \small $|\alpha|$ \text{odd}  } & { \small $\uH_{C_{p^n}}^\alpha(S^0)$} \\
 \hline
{ \small $ |\alpha| >0$, $|\alpha^{C_p}|\leq -2n +2$}  & {\small $\uB_{\ov{n}, \emptyset} \oplus \Phi^\ast_p(\uH^{\alpha^{C_p}}_{C_{p^{n-1}}}(S^0) )$ } 
& { \small $|\alpha| <0 $, $|\alpha^{C_p}| \leq -2n+3$}  & { $\Phi^\ast_p(\uH^{\alpha^{C_p}}_{C_{p^{n-1}}}(S^0) )$ }  \\
\hline
{ \small $ |\alpha| =0$, $|\alpha^{C_p}|\leq -2n +2$}  & {\small $\uZ \oplus \Phi^\ast_p(\uH^{\alpha^{C_p}}_{C_{p^{n-1}}}(S^0) )$ }  & { \small $|\alpha| >0 $, $|\alpha^{C_p}| \leq -2n+3$}  & {\small $\Phi^\ast_p(\uH^{\alpha^{C_p}}_{C_{p^{n-1}}}(S^0) )$ }  \\
\hline
{ \small $ |\alpha| <0$, $|\alpha^{C_p}|\leq -2n +2$}  & {\small $\Phi^\ast_p(\uH^{\alpha^{C_p}}_{C_{p^{n-1}}}(S^0) )$ } 
& { \small $|\alpha| >0 $, $|\alpha^{C_p}| \geq 2n+1$}  & {\small $\Phi^\ast_p(\uH^{\alpha^{C_p}}_{C_{p^{n-1}}}(S^0) )$ } \\
\hline
{ \small $ |\alpha| \neq 0$, $|\alpha^{C_p}|\geq 2n $}  & {\small $ \Phi^\ast_p(\uH^{\alpha^{C_p}}_{C_{p^{n-1}}}(S^0) )$ }  & { \small $|\alpha| <0 $, $|\alpha^{C_p}| \geq 2n+1$}  & {\small $\uB_{\ov{n},\emptyset} \oplus \Phi^\ast_p(\uH^{\alpha^{C_p}}_{C_{p^{n-1}}}(S^0) )$ }  \\
 \hline
{ \small $ |\alpha| =0$, $|\alpha^{C_p}|\geq 2n $}  & {\small $\uZ^\ast \oplus \Phi^\ast_p(\uH^{\alpha^{C_p}}_{C_{p^{n-1}}}(S^0) )$ } 
&   &  \\
 \hline
\end{tabular}
\vspace{.2cm}
\caption{Formula for $\uH_{C_{p^n}}^\alpha(S^0)$ at large $C_p$-fixed points.}
\label{comp-highfix}
\end{table}

Incorporating the values obtained in Proposition \ref{comp_1} into the long exact sequence ~\myTagFormat{eq:base}{0}, we derive
\begin{cor}\label{comp_2}
Suppose $\alpha \in RO(C_{p^n})$ satisfying $|\alpha^H|>0$ for all $H \neq C_p.$ Then, 
\[\uH^\alpha_{C_{p^n}}(S^0; \uZ)\cong  \begin{cases} \uB_{\ov{n}, \ov{1-\frac{|\alpha^{C_p}|}{2}}^c} & \text{ if } 4-2n \le |\alpha^{C_p}| \le 0\\ \uB_{\ov{n}, \emptyset} & \text{ if } |\alpha^{C_p}| \le 2-2n \\ 0 & \text{ if }|\alpha^{C_p}| \ge 2. \end{cases}
\]
\end{cor}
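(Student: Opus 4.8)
The plan is to derive this from Proposition~\ref{comp_1} by a single multiplication by $a_{\lambda_0}$, pushed through the long exact sequence~\myTagFormat{eq:base}{0}. First observe that the hypothesis forces $|\alpha|=|\alpha^e|>0$, and since the first line of the displayed formula contains $|\alpha^{C_p}|/2$ we restrict to the case where $|\alpha^{C_p}|$, equivalently $|\alpha|$, is even. Recall from Example~\ref{lambda0} that multiplication by $a_{\lambda_0}$ induces an isomorphism $\uH^\beta_{C_{p^n}}(S^0)\to\uH^{\beta+\lambda_0}_{C_{p^n}}(S^0)$ whenever $|\beta|\notin\{-2,-1,0\}$, and that replacing $\beta$ by $\beta\pm\lambda_0$ alters none of the $|\beta^{C_{p^k}}|$ with $k\ge 1$. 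Iterating this isomorphism, I would first reduce to the case $|\alpha|=2$, preserving the hypothesis and the value of $|\alpha^{C_p}|$.

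Next I would set $\gamma:=\alpha-\lambda_0$, so $\gamma\in RO_0(C_{p^n})$; since $\lambda_0$ restricts trivially to every $C_{p^k}$ with $k\ge1$, one has $|\gamma^{C_{p^k}}|=|\alpha^{C_{p^k}}|>0$ for all $k>1$, so Proposition~\ref{comp_1} applies to $\gamma$ and identifies $\uH^\gamma_{C_{p^n}}(S^0)$ with $\uZ_S$, where --- writing $\uZ=\uZ_\emptyset$ and $\uZ^\ast=\uZ_{\ov n}$ --- the subset $S\subseteq\ov n$ is $\ov{1-\frac{|\alpha^{C_p}|}{2}}^c$, $\emptyset$, or $\ov n$ in the three ranges of $|\gamma^{C_p}|=|\alpha^{C_p}|$ of the statement. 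Feeding this into~\myTagFormat{eq:base}{0} for index $\gamma$ and using that $|\gamma+\lambda_0|=2$ and $|\gamma+\lambda_0-1|=1$, the values \eqref{lambda0coh} give $\uH^{\gamma+\lambda_0}_{C_{p^n}}(S(\lambda_0)_+)=\uO$ and $\uH^{\gamma+\lambda_0-1}_{C_{p^n}}(S(\lambda_0)_+)\cong\uZ^\ast$, so the sequence reduces to
\[
\uZ^\ast\ \xrightarrow{\ \iota\ }\ \uZ_S\ \xrightarrow{\ \cdot a_{\lambda_0}\ }\ \uH^{\gamma+\lambda_0}_{C_{p^n}}(S^0)\ \to\ \uO .
\]

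The remaining task is to identify $\iota$. Since $|\gamma+\lambda_0|=2$, multiplication by $a_{\lambda_0}$ vanishes at level $C_{p^n}/e$ --- its target is $\tilde{H}^{2}(S^0;\Z)=0$ --- so $\iota$ is onto at $C_{p^n}/e$; as $\uZ^\ast$ is torsion free with injective restriction maps, $\iota$ is injective, and being $\pm 1$ at $C_{p^n}/e$ it must coincide with the canonical morphism $f_{\ov n,S}$ of \eqref{nz_ext1}. Hence $\uH^{\gamma+\lambda_0}_{C_{p^n}}(S^0)\cong\coker f_{\ov n,S}=\uB_{\ov n,S}$, and substituting the three values of $S$ yields $\uB_{\ov n,\ov{1-\frac{|\alpha^{C_p}|}{2}}^c}$, $\uB_{\ov n,\emptyset}$, and $\uB_{\ov n,\ov n}=\uO$, as claimed. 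The one delicate point will be this identification of $\iota$ --- in particular, ruling out a contribution to $\ker\iota$ from the term $\uH^{\gamma+\lambda_0-1}_{C_{p^n}}(S^0)$ preceding $\uZ^\ast$ in the long exact sequence --- but it is precisely the step already carried out in the proof of Proposition~\ref{comp_1}, where the sequence $\uZ^\ast\to\uZ_{\ov{F(\beta)-1}^c}\to\uH^{\beta+\lambda_1}_{C_{p^n}}(S^0)\to\uO$ is resolved via \eqref{nz_ext1}; everything else is bookkeeping with \eqref{lambda0coh} and $a_{\lambda_0}$-periodicity.
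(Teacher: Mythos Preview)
Your proof is correct and follows exactly the approach the paper indicates: feed the values from Proposition~\ref{comp_1} into the long exact sequence~\myTagFormat{eq:base}{0}, reducing first to $|\alpha|=2$ via the $a_{\lambda_0}$-isomorphisms of Example~\ref{lambda0}, and then identifying the cokernel of $\uZ^\ast\to\uZ_S$ as $\uB_{\ov n,S}$ using \eqref{nz_ext1}. Your direct argument for the injectivity of $\iota$ (via its value at $C_{p^n}/e$ and the injective restrictions in $\uZ^\ast$) is a clean way to handle the one point the paper leaves implicit.
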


We next use the multiplicative structure to derive a computation with $|\alpha|=0$ and $|\alpha^{C_p}|$ a sufficiently large negative number.
\begin{thm}\label{negfree}
If $\alpha \in RO_0(C_{p^n})$ with $|\alpha^{C_p}| \le -2n+2$, then the torsion-free part of $\uH^\alpha_{C_{p^n}}(S^0)$ is $\uZ$, and  there is a decomposition 
\[
\uH^\alpha_{C_{p^n}}(S^0) \cong \uZ \oplus \Phi^\ast_p(\uH^{\alpha^{C_p}}_{C_{p^{n-1}}}(S^0) ).
\]
For $|\alpha|<0$ and $|\alpha^{C_p}|\le -2n+2$ even, $\uH^\alpha_{C_{p^n}}(S^0)\cong \Phi^\ast_p(\uH^{\alpha^{C_p}}_{C_{p^{n-1}}}(S^0) ).$
\end{thm}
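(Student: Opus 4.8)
The plan is to separate the two assertions of Theorem~\ref{negfree} and to treat the general $\alpha$ by first reducing to the case $|\alpha|=0$ via the $a_{\lambda_0}$-multiplication techniques of Example~\ref{lambda0}. More precisely, for $|\alpha|<0$ (even) with $|\alpha^{C_p}|\le -2n+2$, I would take $\gamma = \alpha - (|\alpha|/2)(\lambda_0 - 1) \in RO_0(C_{p^n})$, which has $|\gamma^{C_p}|=|\alpha^{C_p}|+|\alpha| \le -2n+2$ as well, and observe that the sequences \myTagFormat{eq:base}{0} applied repeatedly give that multiplication by $a_{\lambda_0}$ is an isomorphism $\uH^{\beta}_{C_{p^n}}(S^0)\to \uH^{\beta+\lambda_0}_{C_{p^n}}(S^0)$ whenever $|\beta|,|\beta+\lambda_0|$ are both negative (by \eqref{lambda0coh}, the groups $\uH^\ast_{C_{p^n}}(S(\lambda_0)_+)$ vanish outside degrees $0,1$). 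This threads $\uH^\alpha_{C_{p^n}}(S^0)$ to $\uH^\gamma_{C_{p^n}}(S^0)$, and since $\Phi^\ast_p$ commutes with the analogous $a_{\lambda_0}$-isomorphisms on the $C_{p^{n-1}}$-side (the invertibility there is by the inductive description), both sides of the claimed isomorphism transform compatibly. Hence it suffices to prove the $|\alpha|=0$ case, which is the first assertion.

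For the first assertion ($|\alpha|=0$, $|\alpha^{C_p}|\le -2n+2$), the key is to get hold of $\uH^\alpha_{C_{p^n}}(S^0)$ through a representation-sphere filtration that peels off the $C_p$-fixed part. Write $\alpha = \alpha^{C_p} + (\lambda_0\text{-part}) = \alpha^{C_p} + a\lambda_0$ where $a = -|\alpha^{C_p}|/2 \ge n-1 > 0$ (using that, after the coherent choice of units from Proposition~\ref{Zen17}, only $\lambda_0,\dots,\lambda_{n-1}$ appear and $\lambda_1,\dots,\lambda_{n-1}$ are fixed by $C_p$). The plan is to induct on $a$, stripping one $a_{\lambda_0}$ at a time using \myTagFormat{eq:base}{0}: at the relevant degree $\beta$ with $|\beta|=-2$ (the one place $a_{\lambda_0}$ fails to be iso) we get a short exact sequence
\[
\uO \to \uZ^\ast \to \uH^{\beta+\lambda_0}_{C_{p^n}}(S^0) \to \ker\big(\uH^{\beta+\lambda_0}_{C_{p^n}}(S^0) \to \uH^{\beta+\lambda_0}_{C_{p^n}}(S(\lambda_0)_+)\big)\to\uO,
\]
wait — more carefully, the useful sequence relates $\uH^\beta$ and $\uH^{\beta+\lambda_0}$ through $\uH^\ast_{C_{p^n}}(S(\lambda_0)_+)$, which by \eqref{lambda0coh} contributes $\uZ$ in degree $0$ and $\uZ^\ast$ in degree $1$. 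The base case $a = n-1$ (i.e.\ $|\alpha^{C_p}| = -2n+2$) is exactly where Proposition~\ref{comp_1} or its neighbours become available on the $C_{p^{n-1}}$-quotient via Proposition~\ref{signs}/Lemma~\ref{pullbackalpha}: since all fixed dimensions $|\alpha^{C_{p^k}}|$ for $k\ge 1$ can be arranged positive (or handled by Anderson duality \eqref{anders-comp}), we get $\uH^{\beta}_{C_{p^n}}(S^0)\cong \Phi^\ast_p(\uH^{\beta^{C_p}}_{C_{p^{n-1}}}(S^0))$ together with a $\uZ^\ast$ (or $\uB_{\ov n,\emptyset}$) summand from the fixed-point contributions, matching the right-hand side.

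The inductive step is to show the extension
\[
\uO \to \uZ \to \uH^\alpha_{C_{p^n}}(S^0) \to \Phi^\ast_p\big(\uH^{\alpha^{C_p}}_{C_{p^{n-1}}}(S^0)\big) \to \uO
\]
splits. For this I would use two things: first, that $\uH^\alpha_{C_{p^n}}(S^0)(C_{p^n}/e) \cong \Z$ with the $\uZ$-summand accounting for it (so the quotient is a torsion Mackey functor vanishing at $C_{p^n}/e$, consistent with $\Phi^\ast_p$ of something); and second, an explicit splitting map coming from the unit class $u \in \uH^{\text{(even)}}_{C_{p^n}}(S^0)(C_{p^n}/C_{p^n})$ in the appropriate degree — concretely the class built from the $u_{\lambda_0}$'s and the ratio classes $[p^{n-i}u_{\lambda_i}^{-1}]$ of Definition~\ref{ratio-classes}, which realize a copy of $\uZ$ (not $\uZ^\ast$) because $|\alpha^{C_p}|$ is sufficiently negative for the relevant transfers to be surjective. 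Equivalently, one can invoke Anderson duality \eqref{anders-comp}: Corollary~\ref{cortors} plus \eqref{anders-comp} identify $\uH^\alpha_{C_{p^n}}(S^0)$ up to $\Ext_L$ with a known cohomology group, and $\Phi^\ast_p$ commutes with $\Ext_L$ on $\Z$-modules that vanish at $C_{p^n}/e$, so the torsion part is $\Phi^\ast_p(\uH^{\alpha^{C_p}}_{C_{p^{n-1}}}(S^0))^{\text{tors}}$ while the free rank-one part is forced.

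The main obstacle I expect is proving that the extension splits rather than producing a nontrivial one like the $\uT(n)$ of Section~\ref{Z-mod}. The resolution should be that when $|\alpha^{C_p}|\le -2n+2$ the grading is far enough into the "negative cone" that the obstruction class in $\Ext^1$ in the category of $\uZ$-modules vanishes — made precise either by exhibiting the explicit polynomial/ratio class giving a section (the cleanest route, and the one consistent with the paper's emphasis that nontrivial extensions only appear for $n\ge 3$ and near the "boundary" of the computable range), or by a dimension/rank count at each level $C_{p^n}/C_{p^k}$ showing $\uH^\alpha_{C_{p^n}}(S^0)(C_{p^n}/C_{p^k})$ has the same order as $\Z \oplus \Phi^\ast_p(\uH^{\alpha^{C_p}}_{C_{p^{n-1}}}(S^0))(C_{p^n}/C_{p^k})$ and that the restriction/transfer pattern is the split one. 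I would carry out the latter bookkeeping using the long exact sequences \myTagFormat{eq:base}{0} and \myTagFormat{eq:base}{1} together with Proposition~\ref{sph_coh} and the $\Theta_m$, $\Theta^\ast_m$ computations, exactly as in the proof of Proposition~\ref{comp_1}.
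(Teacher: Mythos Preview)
Your proposal has the right overall shape but misses the key mechanism the paper uses, and one of your two cases is handled far more simply than you suggest.

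\textbf{The $|\alpha|<0$ case.} This is a one-liner: since $|\alpha|<0$ and $|\alpha^{C_p}|\le -2n+2<0$ have the same sign, Proposition~\ref{signs} applies directly to give $\uH^\alpha_{C_{p^n}}(S^0)\cong \Phi^\ast_p(\uH^{\alpha^{C_p}}_{C_{p^{n-1}}}(S^0))$. No $a_{\lambda_0}$-chaining is needed. (Your formula for $\gamma$ is also off: $\alpha-(|\alpha|/2)(\lambda_0-1)$ has dimension $|\alpha|/2$, not $0$.)

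\textbf{The $|\alpha|=0$ case and the splitting.} You correctly isolate the issue as showing that a short exact sequence splits, but the sequence you display is in the wrong direction: from \myTagFormat{eq:base}{0} one gets
\[
\uO \to \Phi^\ast_p\big(\uH^{\alpha^{C_p}}_{C_{p^{n-1}}}(S^0)\big) \cong \uH^{\alpha-\lambda_0}_{C_{p^n}}(S^0) \to \uH^{\alpha}_{C_{p^n}}(S^0) \xrightarrow{\pi^\ast} \uH^{\alpha}_{C_{p^n}}(S(\lambda_0)_+)\cong \uZ,
\]
so the problem is to produce a \emph{section of $\pi^\ast$}. None of your three suggestions resolves this: the ``explicit ratio class'' approach is not carried out; Anderson duality identifies the groups but does not manufacture a splitting; and a level-by-level order count cannot distinguish the split extension from a nonsplit one (as the Mackey functor $\uT(n)$ in Section~\ref{Z-mod} shows).

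The paper's device is multiplicative. Write the given grading as $\alpha'+\beta$ with $\alpha',\beta\in RO_0(C_{p^n})$ chosen so that $|\alpha'^{C_p}|\le 2-2n$ and $|\alpha'^{H}|>0$ for $H\neq e,C_p$, while $|\beta^{C_p}|=0$ and $|\beta^{H}|\le 0$ for $H\neq e,C_p$. Proposition~\ref{comp_1} gives $\uH^{\alpha'}_{C_{p^n}}(S^0)\cong\uZ$, and its Anderson dual (via \eqref{and_comp} and Proposition~\ref{cohzero}) gives $\uH^{\beta}_{C_{p^n}}(S^0)\cong\uZ$. Now the ring multiplication furnishes a map
\[
\mu\colon \uZ\cong \uH^{\alpha'}_{C_{p^n}}(S^0)\,\square_{\uZ}\,\uH^{\beta}_{C_{p^n}}(S^0)\longrightarrow \uH^{\alpha'+\beta}_{C_{p^n}}(S^0),
\]
and the $\uH^\bigstar_{C_{p^n}}(S^0)$-module structure on $\uH^\bigstar_{C_{p^n}}(S(\lambda_0)_+)$ shows $\pi^\ast\circ\mu$ is an isomorphism. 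Thus $\mu$ is the required section of $\pi^\ast$, and the decomposition follows. Since the values $|\alpha'^{H}|+|\beta^{H}|$ for $H\neq e,C_p$ are unconstrained, every $\alpha$ in the hypothesis is reached this way. This multiplicative section is the ingredient your sketch lacks.
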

\begin{proof}
The last statement follows from Proposition \ref{signs}. In the rest of the proof, we use the $\uH^\bigstar_{C_{p^n}}(S^0)$-module structure on $\uH^\bigstar_{C_{p^n}}(S(\lambda_0)_+)$ which we denote by $\mu_{S(\lambda_0)}$. The multiplication in $\uH^\bigstar_{C_{p^n}}(S^0)$ is denoted by $\mu$. 

Proposition \ref{comp_1} implies that $\uH^\alpha_{C_{p^n}}(S^0) \cong \uZ$  for $\alpha \in RO_0(C_{p^n})$ satisfying $|\alpha^H|>0$ for all $H \neq e, C_p$ and $|\alpha^{C_p}|\le 2-2n$. Let $\beta\in RO_0(C_{p^n})$ satisfying $|\beta^{C_p}|=0$ and $|\beta^H|\le 0$ for all $H \neq e, C_p$. Applying Anderson duality \eqref{and_comp} and  Propositions \ref{comp_1}, \ref{cohzero} (a), we deduce $\uH^\beta_{C_{p^n}}(S^0; \uZ)\cong \uZ.$

We have the commutative diagram ($\pi : S(\lambda_0)_+ \to S^0$ is the map induced by adding disjoint base-points to  $S(\lambda_0)\to \ast$ ) 
\[ 
\xymatrix{\uZ \cong \uH^\alpha_{C_{p^n}}(S^0) \square_{\uZ}\uH^\beta_{C_{p^n}}(S^0)\ar[d]^{\pi^\ast \square_{\uZ}\uZ} \ar[rr]^-{\mu} && \uH^{\alpha+\beta}_{C_{p^n}}(S^0) \ar[d]^{\pi^\ast}\\ 
\uZ \cong  \uH^\alpha_{C_{p^n}}(S(\lambda_0)_+) \square_{\uZ}\uH^\beta_{C_{p^n}}(S^0) \ar[rr]^-{\mu_{S(\lambda_0)}} && \uH^{\alpha+\beta}_{C_{p^n}}(S(\lambda_0)_+)\cong \uZ}
\]
 Note that $\pi^\ast \square_{\uZ}\uZ$ and  $\mu_{S(\lambda_0)}$ are isomorphisms, so that $\mu$ is a section for $\pi^\ast$ up to isomorphism. This yields the decomposition (using ~\myTagFormat{eq:base}{0})
 \[
 \uH^{\alpha+\beta}_{C_{p^n}}(S^0) \cong \uZ \oplus \uH^{\alpha+\beta-\lambda_0}_{C_{p^n}}(S^0, \uZ)\cong \uZ \oplus \Phi^\ast_p(\uH^{(\alpha+\beta)^{C_p}}_{C_{p^{n-1}}}(S^0, \uZ)) (\text{by Corollary } \ref{signs}).
 \]
 Hence the result follows as for each $H \neq e, C_p,$ $|(\alpha+\beta)^H|$ can be made arbitrary.
\end{proof}
For $\alpha$ odd positive case, the following is a direct consequence of Theorem \ref{negfree}.
\begin{cor}\label{negtor}
Suppose $\alpha\in RO(C_{p^n})$ satisfying $|\alpha|>0$ odd and $|\alpha^{C_p}| \le -2n+3,$ then $\uH^{\alpha}_{C_{p^n}}(S^0) \cong \Phi^\ast_p(\uH^{\alpha^{C_p}}_{C_{p^{n-1}}}(S^0))$. 
\end{cor}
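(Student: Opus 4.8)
The plan is to peel the $\lambda_0$-summand off $\alpha$ by $a_{\lambda_0}$-periodicity, landing in the range governed by Proposition~\ref{signs}, which is also the input behind the last assertion of Theorem~\ref{negfree}. Write $\alpha$ in standard form $\alpha = c + \sum_{k\geq 0} a_k\lambda_k$ and set $\tilde\alpha := \alpha - a_0\lambda_0 = c + \sum_{k\geq 1} a_k\lambda_k$. Since $\lambda_0$ is the only irreducible summand on which $C_p$ acts non-trivially, $\tilde\alpha$ lies in $\im(RO(C_{p^n}/C_p)\to RO(C_{p^n}))$, with $\tilde\alpha^{C_p} = \alpha^{C_p}$, hence $|\tilde\alpha| = |\tilde\alpha^{C_p}| = |\alpha^{C_p}|$. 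The hypothesis gives $a_0 = \tfrac{1}{2}(|\alpha|-|\alpha^{C_p}|) \geq n-1 \geq 1$, so there is genuinely a $\lambda_0$ to remove, and $|\alpha^{C_p}| \equiv |\alpha| \equiv 1 \pmod 2$.

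The first step is to show $\uH^{\tilde\alpha}_{C_{p^n}}(S^0) \cong \uH^{\alpha}_{C_{p^n}}(S^0)$ by iterating multiplication by $a_{\lambda_0}$. By Example~\ref{lambda0}, for $C_{p^n}$ the map $\cdot a_{\lambda_0} \colon \uH^{\delta}_{C_{p^n}}(S^0) \to \uH^{\delta+\lambda_0}_{C_{p^n}}(S^0)$ is an isomorphism whenever $|\delta| \notin \{-2,0\}$, the threshold case $|\delta| = -1$ being covered by the argument of \cite[Proposition~4.5]{BG20}. Taking $\delta = \tilde\alpha + j\lambda_0$ for $j = 0, \dots, a_0-1$ one gets $|\delta| = |\alpha^{C_p}| + 2j$, which is odd, hence never $-2$ or $0$. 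So every link of the chain is an isomorphism, and so is the composite $\uH^{\tilde\alpha}_{C_{p^n}}(S^0) \cong \uH^{\alpha}_{C_{p^n}}(S^0)$.

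The second step is Proposition~\ref{signs} with $m = 1$ applied to $\tilde\alpha$: its hypothesis holds because $|\tilde\alpha|$ and $|\tilde\alpha^{C_p}|$ agree (both equal the negative number $|\alpha^{C_p}|$), so $\uH^{\tilde\alpha}_{C_{p^n}}(S^0) \cong \Phi^\ast_p(\uH^{\tilde\alpha^{C_p}}_{C_{p^{n-1}}}(S^0)) = \Phi^\ast_p(\uH^{\alpha^{C_p}}_{C_{p^{n-1}}}(S^0))$; one could equally cite Lemma~\ref{pullbackalpha}, as $\tilde\alpha$ is pulled back from $C_{p^{n-1}}$. Composing the two isomorphisms gives the corollary. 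The only point that needs real care is the appeal to $a_{\lambda_0}$-periodicity at degrees with $|\delta| = -1$, where $\cdot a_{\lambda_0}$ is a priori only surjective and the injectivity must be taken from Example~\ref{lambda0}; the remainder is bookkeeping.
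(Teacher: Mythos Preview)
Your argument is correct and follows the same overall strategy as the paper: strip off the $\lambda_0$-part of $\alpha$ via $a_{\lambda_0}$-multiplication until you land in the range where Proposition~\ref{signs} (or Lemma~\ref{pullbackalpha}) applies. The one substantive difference is how the crossing at $|\delta|=-1$ is handled. You invoke the injectivity of $\cdot a_{\lambda_0}$ at $|\delta|=-1$ directly, as recorded in Example~\ref{lambda0}, and then continue peeling all the way down to $\tilde\alpha$. The paper instead stops at $|\alpha|=1$, and for the single remaining step $\uH^{\alpha-\lambda_0}\to\uH^{\alpha}$ it appeals to the splitting of $\pi^\ast$ established in the proof of Theorem~\ref{negfree}, which forces the connecting map $\uH^{\alpha-1}(S^0)\to\uH^{\alpha-1}(S(\lambda_0)_+)\cong\uZ$ to be surjective and hence $\cdot a_{\lambda_0}$ to be an isomorphism. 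Your route is a bit more economical in that it uses only Example~\ref{lambda0} and avoids the multiplicative splitting argument; the paper's route makes the dependence on Theorem~\ref{negfree} explicit, which is natural since the corollary is presented as a consequence of that theorem.
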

\begin{proof}
From Example \ref{lambda0}, it suffices to prove for $|\alpha|=1$. 
Then by  ~\myTagFormat{eq:base}{0}, one obtains
\[
\xymatrix@R=0.5cm@C=0.5cm{\cdots \uH^{\alpha-1}_{C_{p^n}}(S^0) \ar@{=}[d]^{\text{(by Theorem } \ref{negfree})}\ar[r] 
&\uH^{\alpha-1}_{C_{p^n}}(S(\lambda_0)_+) \ar@{=}[d]\ar[r] 
& \uH^{\alpha-\lambda_0}_{C_{p^n}}(S^0) \ar[r] & \uH^{\alpha}_{C_{p^n}}(S^0) \ar[r] 
& \uH^{\alpha}_{C_{p^n}}(S(\lambda_0)_+)\ar@{=}[d] \cdots \\ 
\uZ \oplus \Phi^\ast_p(\uH^{\alpha^{C_p}-1}_{C_{p^{n-1}}}(S^0, \uZ) )  
& \uZ & & 
& \uZ^\ast}
\]
The map $\uH^{\alpha-1}_{C_{p^n}}(S^0)\to \uH^{\alpha-1}_{C_{p^n}}(S(\lambda_0)_+)$ has a section according to the proof of Theorem \ref{negfree}. Thus, there is an isomorphism
$
\uH^{\alpha-\lambda_0}_{C_{p^n}}(S^0) \stackrel{\cong}{\to} \uH^{\alpha}_{C_{p^n}}(S^0).
$
The result follows from  $\uH^{\alpha-\lambda_0}_{C_{p^n}}(S^0)\cong \Phi^\ast_p(\uH^{\alpha^{C_p}}_{C_{p^{n-1}}}(S^0))$ using  Corollary \ref{signs}. 
\end{proof}

The following corollary is a direct consequence of the Anderson duality and Theorem \ref{negfree}.
\begin{cor}\label{zerpos}
The Mackey functor $\uH^{\alpha}_{C_{p^n}}(S^0)\cong \uZ^* \oplus \Phi^\ast_p(\uH^{\alpha^{C_p}}_{C_{p^{n-1}}}(S^0))$ if $|\alpha|=0$ and $|\alpha^{C_p}|\ge 2n.$
\end{cor}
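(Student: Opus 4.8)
The plan is to deduce Corollary \ref{zerpos} from Anderson duality \eqref{anders-comp} applied to Theorem \ref{negfree}. First I would set $\alpha \in RO_0(C_{p^n})$ with $|\alpha^{C_p}|\ge 2n$, and consider $\beta := 3-\lambda_0-\alpha$. Since $|\alpha|=0$, we have $|\beta| = 1 \neq 0$, so \eqref{anders-comp} gives $\uH^\alpha_{C_{p^n}}(S^0)\cong \Ext_L(\uH^{3-\lambda_0-\alpha}_{C_{p^n}}(S^0),\Z)$; but actually it is cleaner to apply duality the other way. Write $\gamma := 2-\lambda_0-\alpha$, so $|\gamma| = 0$ and, crucially, $|\gamma^{C_p}| = 2 - 2 - |\alpha^{C_p}| = -|\alpha^{C_p}| \le -2n < -2n+2$. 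Thus $\gamma$ satisfies the hypothesis of Theorem \ref{negfree}, giving $\uH^\gamma_{C_{p^n}}(S^0)\cong \uZ\oplus\Phi^\ast_p(\uH^{\gamma^{C_p}}_{C_{p^{n-1}}}(S^0))$.

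Next I would feed this into the short exact sequence \eqref{and_comp} with the grading chosen so that the $\Hom_L$-term is governed by $\gamma$. Precisely, \eqref{and_comp} reads
\[
0 \to \Ext_L(\uH^{3-\lambda_0-\alpha}_{C_{p^n}}(S^0),\Z) \to \uH^\alpha_{C_{p^n}}(S^0) \to \Hom_L(\uH^{2-\lambda_0-\alpha}_{C_{p^n}}(S^0),\Z)\to 0,
\]
and $2-\lambda_0-\alpha = \gamma$. The $\Ext_L$-term on the left involves $3-\lambda_0-\alpha = \gamma+1$, which is odd with all fixed-point dimensions negative (since adding $1$ to $\gamma$ only shifts the underlying dimension); by Proposition \ref{cohzero}(a) together with the constraint $|(\gamma+1)^{C_{p^k}}|$ being eventually computed via Theorem \ref{negfree} or Corollary \ref{negtor}, one checks the relevant group has trivial $\Ext_L$. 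Actually the sign conditions need care: $|\gamma^{C_{p^k}}| = -|\alpha^{C_{p^k}}|$, and $|\alpha^{C_{p^k}}| \ge |\alpha^{C_p}| \ge 2n > 0$ for all $k \ge 1$ since fixed-point dimension is nondecreasing in the subgroup; hence $|(\gamma+1)^{C_{p^k}}| < 0$ for all $k\ge 1$ while $|(\gamma+1)^e| = 1$. This is exactly the setup where $\uH^{\gamma+1}_{C_{p^n}}(S^0)$ has underlying group $0$, so $\Ext_L$ of it vanishes, collapsing the sequence to $\uH^\alpha_{C_{p^n}}(S^0)\cong \Hom_L(\uH^\gamma_{C_{p^n}}(S^0),\Z)$.

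Finally I would compute $\Hom_L$ of the decomposition from Theorem \ref{negfree}. Since $\Hom_L$ is additive, $\Hom_L(\uZ\oplus\Phi^\ast_p(\uH^{\gamma^{C_p}}_{C_{p^{n-1}}}(S^0)),\Z)\cong \Hom_L(\uZ,\Z)\oplus\Hom_L(\Phi^\ast_p(\uH^{\gamma^{C_p}}_{C_{p^{n-1}}}(S^0)),\Z)$. The first summand is $\uZ^\ast$ by the identification in the notation paragraph. For the second, $\uH^{\gamma^{C_p}}_{C_{p^{n-1}}}(S^0)$ is torsion (Corollary \ref{cortors}, since $|\gamma^{C_p}|\neq 0$), hence so is $\Phi^\ast_p$ of it, and for a $\uZ$-module which is $0$ at the bottom level $\Hom_L$ vanishes while $\Ext_L$ computes the duality; combining with the fact (noted after \eqref{Phi_Comp}) that $\Phi^\ast_p$ commutes with $\Ext_L$, and that $\Ext_L$ on equivariant cohomology realizes Anderson duality as in \eqref{anders-comp}, one identifies $\Hom_L$-of-the-whole-thing as $\uZ^\ast\oplus\Phi^\ast_p(\uH^{\alpha^{C_p}}_{C_{p^{n-1}}}(S^0))$, using $\gamma^{C_p} = -\alpha^{C_p}$ absorbed by the $C_{p^{n-1}}$-level Anderson duality. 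The main obstacle is the careful bookkeeping of which fixed-point dimensions and which gradings enter, so that the $\Ext_L$-term in \eqref{and_comp} genuinely vanishes and the $C_{p^{n-1}}$-level term comes out as $\uH^{\alpha^{C_p}}$ rather than some shift; this is where I would spend the most effort, verifying the monotonicity $|\alpha^e|\le|\alpha^{C_p}|\le\cdots$ forces all the needed sign conditions simultaneously.
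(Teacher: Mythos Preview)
Your argument has a genuine gap at the claim that the $\Ext_L$ term in \eqref{and_comp} vanishes. First, the assertion ``$|\alpha^{C_{p^k}}|\ge|\alpha^{C_p}|$ for all $k\ge1$ since fixed-point dimension is nondecreasing in the subgroup'' is simply false for virtual representations: the numbers $|\alpha^{C_{p^k}}|$ can oscillate arbitrarily, and the hypothesis of the corollary imposes no constraint on $|\alpha^{C_{p^k}}|$ for $k\ge2$. Second, and more fundamentally, even knowing $\uH^{\gamma+1}_{C_{p^n}}(S^0)(G/e)=0$ (which holds since $|\gamma+1|=1$) only makes this Mackey functor torsion by Lemma~\ref{torsion}; that kills $\Hom_L$, not $\Ext_L$. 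In fact Corollary~\ref{negtor} gives $\uH^{\gamma+1}_{C_{p^n}}(S^0)\cong\Phi_p^\ast(\uH^{3-\alpha^{C_p}}_{C_{p^{n-1}}}(S^0))$, which is typically nonzero, so its $\Ext_L$ is nonzero too.

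You have the roles of the two terms in \eqref{and_comp} reversed. The $\Hom_L$ term contributes only $\uZ^\ast$, because $\Hom_L$ annihilates the torsion summand of $\uH^\gamma_{C_{p^n}}(S^0)\cong\uZ\oplus\Phi_p^\ast(\cdots)$; your own final paragraph concedes this and then inconsistently asserts that ``$\Hom_L$-of-the-whole-thing'' still contains the $\Phi_p^\ast$ piece. That piece actually arises on the $\Ext_L$ side, via $\Ext_L\big(\Phi_p^\ast(\uH^{3-\alpha^{C_p}}_{C_{p^{n-1}}}(S^0)),\Z\big)\cong\Phi_p^\ast(\uH^{\alpha^{C_p}}_{C_{p^{n-1}}}(S^0))$, using that $\Phi_p^\ast$ commutes with $\Ext_L$ together with Anderson duality at the $C_{p^{n-1}}$ level. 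What remains is a genuine short exact sequence $\uO\to\Phi_p^\ast(\uH^{\alpha^{C_p}}_{C_{p^{n-1}}}(S^0))\to\uH^\alpha_{C_{p^n}}(S^0)\to\uZ^\ast\to\uO$, and a separate splitting argument is needed; the paper supplies one by observing that the boundary map $\uZ^\ast\cong\uH^{\alpha+\lambda_0-1}_{C_{p^n}}(S(\lambda_0)_+)\to\uH^\alpha_{C_{p^n}}(S^0)$ in the long exact sequence of $S(\lambda_0)_+\to S^0\to S^{\lambda_0}$ is a section.
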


\begin{proof}
Using Theorem \ref{negfree} and Corollary \ref{negtor}, the short exact sequence \eqref{and_comp} turns out to be 
\[
\uO \to  \Phi^\ast_p(\uH^{3-\alpha^{C_p}}_{C_{p^{n-1}}}(S^0))^E \to \uH^{\alpha}_{C_{p^n}}(S^0) \to \uZ^\ast \to \uO.
\]
The map $\uZ^\ast \cong \uH^{\alpha+\lambda_0 -1 }_{C_{p^n}}(S^0)\to \uH^\alpha_{C_{p^n}}(S^0)$ in ~\myTagFormat{eq:base}{0} serves as a splitting for this sequence. Applying \eqref{anders-comp} to $\uH^{3-\alpha^{C_p}}_{C_{p^{n-1}}}(S^0)$, we obtain the result using the identifications in Example \ref{lambda0}. 
%
\end{proof}

The following result is readily deduced from Corollary \ref{negtor} and Anderson duality \eqref{anders-comp}
\begin{cor}\label{postor}
Suppose $\alpha\in RO(C_{p^n})$ satisfying $|\alpha|<0$ even and $|\alpha^{C_p}| \ge 2n,$ then $\uH^{\alpha}_{C_{p^n}}(S^0) \cong \Phi^\ast_p(\uH^{\alpha^{C_p}}_{C_{p^{n-1}}}(S^0))$. 
\end{cor}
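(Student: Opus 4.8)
The plan is to descend one group at a time using Anderson duality \eqref{anders-comp}, with Corollary \ref{negtor} supplying the identification at the intermediate stage, exactly in the style of the proof of Corollary \ref{zerpos}. Since $|\alpha|<0\neq 0$, \eqref{anders-comp} gives $\uH^\alpha_{C_{p^n}}(S^0)\cong \Ext_L(\uH^{3-\lambda_0-\alpha}_{C_{p^n}}(S^0),\Z)$. Set $\beta=3-\lambda_0-\alpha$; then $|\beta|=1-|\alpha|$ is odd and $>1$, and since $(\lambda_0)^{C_p}=0$ one has $\beta^{C_p}=3-\alpha^{C_p}$, so $|\beta^{C_p}|=3-|\alpha^{C_p}|\le 3-2n=-2n+3$. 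Thus $\beta$ satisfies the hypotheses of Corollary \ref{negtor}, which gives $\uH^\beta_{C_{p^n}}(S^0)\cong \Phi^\ast_p\big(\uH^{3-\alpha^{C_p}}_{C_{p^{n-1}}}(S^0)\big)$. Because $|3-\alpha^{C_p}|\neq 0$, the Mackey functor $\uH^{3-\alpha^{C_p}}_{C_{p^{n-1}}}(S^0)$ vanishes at the trivial level, so $\Phi^\ast_p$ commutes with $\Ext_L(-,\Z)$ on it (as noted after \eqref{Phi_Comp}), and we obtain
\[
\uH^\alpha_{C_{p^n}}(S^0)\cong \Phi^\ast_p\Big(\Ext_L\big(\uH^{3-\alpha^{C_p}}_{C_{p^{n-1}}}(S^0),\Z\big)\Big).
\]

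It then remains to identify $\Ext_L\big(\uH^{3-\alpha^{C_p}}_{C_{p^{n-1}}}(S^0),\Z\big)$ with $\uH^{\alpha^{C_p}}_{C_{p^{n-1}}}(S^0)$. Applying \eqref{anders-comp} at the level of $C_{p^{n-1}}$ to the grading $\alpha^{C_p}-\lambda_0$ (which has nonzero dimension $|\alpha^{C_p}|-2\ge 2n-2$), and using $3-\lambda_0-(\alpha^{C_p}-\lambda_0)=3-\alpha^{C_p}$, we get $\uH^{\alpha^{C_p}-\lambda_0}_{C_{p^{n-1}}}(S^0)\cong \Ext_L\big(\uH^{3-\alpha^{C_p}}_{C_{p^{n-1}}}(S^0),\Z\big)$. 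Finally, by the $C_{p^{n-1}}$-analog of Example \ref{lambda0}, the map $a_{\lambda_0}\colon \uH^{\alpha^{C_p}-\lambda_0}_{C_{p^{n-1}}}(S^0)\to \uH^{\alpha^{C_p}}_{C_{p^{n-1}}}(S^0)$ is an isomorphism, since $|\alpha^{C_p}-\lambda_0|=|\alpha^{C_p}|-2\ge 2n-2>0$ lies outside the exceptional window $\{-2,-1,0\}$ (here $n\ge 2$ is used). Chaining the three isomorphisms yields $\uH^\alpha_{C_{p^n}}(S^0)\cong \Phi^\ast_p\big(\uH^{\alpha^{C_p}}_{C_{p^{n-1}}}(S^0)\big)$.

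The argument is entirely formal once the bookkeeping is arranged; the only points requiring care are that every invocation of \eqref{anders-comp} stays in a range where the $\Hom_L$-term of \eqref{and_comp} vanishes (guaranteed since the relevant fixed-point dimensions are all nonzero) and that the final $a_{\lambda_0}$-shift avoids the window $\{-2,-1,0\}$ (guaranteed by $|\alpha^{C_p}|\ge 2n\ge 4$). I do not expect any genuine obstacle beyond keeping track of the dimensions of fixed points across the two levels.
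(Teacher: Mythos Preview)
Your proof is correct and follows exactly the route the paper sketches (the paper merely states the result is ``readily deduced from Corollary \ref{negtor} and Anderson duality \eqref{anders-comp}''): Anderson duality at the $C_{p^n}$ level reduces to Corollary \ref{negtor}, then Anderson duality at the $C_{p^{n-1}}$ level together with an $a_{\lambda_0}$-shift identifies the $\Ext_L$ term, precisely as in the proof of Corollary \ref{zerpos}.
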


The following result is obtained by applying ~\myTagFormat{eq:base}{0} to the results of Theorem \ref{negfree}.
\begin{cor}\label{posevtor}
Suppose $\alpha \in RO(C_{p^n})$ satisfying $|\alpha|>0$ even and $|\alpha^{C_p}|\le -2n+2$, then $\uH^{\alpha}_{C_{p^n}}(S^0) \cong \uB_{\ov{n}, \emptyset} \oplus \Phi^\ast_p(\uH^{\alpha^{C_p}}_{C_{p^{n-1}}}(S^0) ).$ \end{cor}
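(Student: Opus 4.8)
The plan is to feed the computation of Theorem \ref{negfree} into the long exact sequence \ref{eq:base-m} at $m=0$, exactly as in the proof of Corollary \ref{negtor}, but now keeping track of the ``extra'' torsion produced by $a_{\lambda_0}$-multiplication at $|\alpha|=0$. First I would reduce to the case $|\alpha|=2$ using Example \ref{lambda0}: since $a_{\lambda_0}$-multiplication $\uH^{\alpha'}_{C_{p^n}}(S^0)\to\uH^{\alpha'+\lambda_0}_{C_{p^n}}(S^0)$ is an isomorphism for $|\alpha'|\neq -2,-1,0$ and $\alpha'+\lambda_0$ has the same $C_{p^k}$-fixed dimensions shifted consistently for $k\geq 1$, the statement for $|\alpha|>2$ even follows from the statement for $|\alpha|=2$, and the value of $\Phi^\ast_p(\uH^{\alpha^{C_p}}_{C_{p^{n-1}}}(S^0))$ is unaffected since $(\alpha+\lambda_0)^{C_p}=\alpha^{C_p}$ up to the invertible-class conventions.

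Next, for $|\alpha|=2$ I would set $\beta=\alpha-\lambda_0$, so $|\beta|=0$ and $|\beta^{C_p}|=|\alpha^{C_p}|\leq -2n+2$ (up to the standard identifications reducing $RO(G)$ to combinations of the $\lambda_k$). Theorem \ref{negfree} gives $\uH^{\beta}_{C_{p^n}}(S^0)\cong \uZ\oplus\Phi^\ast_p(\uH^{\beta^{C_p}}_{C_{p^{n-1}}}(S^0))$, and the last statement of Theorem \ref{negfree} (together with Proposition \ref{signs}, since $|\beta-\lambda_0|=-2<0$) gives $\uH^{\beta-\lambda_0}_{C_{p^n}}(S^0)\cong\Phi^\ast_p(\uH^{\beta^{C_p}}_{C_{p^{n-1}}}(S^0))$. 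Now apply \ref{eq:base-m} at $m=0$ for index $\beta$:
\[
\uH^{\beta-1}_{C_{p^n}}(S(\lambda_0)_+)\to \uH^{\beta}_{C_{p^n}}(S^0)\xrightarrow{\,\cdot a_{\lambda_0}\,}\uH^{\beta+\lambda_0}_{C_{p^n}}(S^0)\to\uH^{\beta+\lambda_0}_{C_{p^n}}(S(\lambda_0)_+)\to\uH^{\beta+1}_{C_{p^n}}(S^0).
\]
By \eqref{lambda0coh}, $\uH^{\beta-1}_{C_{p^n}}(S(\lambda_0)_+)=0$ since $|\beta-1|=-1$ is not $0$; and $\uH^{\beta+\lambda_0}_{C_{p^n}}(S(\lambda_0)_+)\cong\uZ$ since $|\beta+\lambda_0|=2$... wait, \eqref{lambda0coh} is only nonzero for $|\alpha|=0,1$, so in fact $\uH^{\beta+\lambda_0}_{C_{p^n}}(S(\lambda_0)_+)=0$ as well. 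Hence $\cdot a_{\lambda_0}:\uH^{\beta}_{C_{p^n}}(S^0)\xrightarrow{\cong}\uH^{\alpha}_{C_{p^n}}(S^0)$, which would give the wrong answer; so instead I must run the sequence one notch lower, using index $\beta-\lambda_0$, i.e. I would use \ref{eq:base-m} at $m=0$ with source degree $\beta-\lambda_0$ (where $|\beta-\lambda_0|=-2$): then $\uH^{\beta-\lambda_0-1}_{C_{p^n}}(S(\lambda_0)_+)=0$, $\uH^{\beta}_{C_{p^n}}(S(\lambda_0)_+)\cong\uZ$ (as $|\beta|=0$), giving the four-term exact sequence
\[
0\to\uH^{\beta-\lambda_0}_{C_{p^n}}(S^0)\xrightarrow{\cdot a_{\lambda_0}}\uH^{\beta}_{C_{p^n}}(S^0)\to\uZ\to\uH^{\beta+\lambda_0}_{C_{p^n}}(S^0)\to\uH^{\beta+1}_{C_{p^n}}(S^0).
\]
The quotient $\uH^{\beta}_{C_{p^n}}(S^0)/\mathrm{im}(a_{\lambda_0})$ is then $\uZ/(\Phi^\ast_p\text{-part})$ embedding into $\uZ$; comparing with Theorem \ref{negfree}'s splitting $\uH^{\beta}_{C_{p^n}}(S^0)\cong\uZ\oplus\Phi^\ast_p(\cdots)$ and noting $\Phi^\ast_p(\cdots)$ is all torsion (Corollary \ref{cortors}, Lemma \ref{torsion}), the image of $a_{\lambda_0}$ is exactly the torsion summand plus $p\cdot\uZ$ in the free part—wait, this needs the precise map. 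The cleanest route: observe $\uH^{\beta+1}_{C_{p^n}}(S^0)\cong\Phi^\ast_p(\uH^{\beta^{C_p}+1}_{C_{p^{n-1}}}(S^0))$ by Corollary \ref{negtor} (as $|\beta+1|=1>0$, $|(\beta+1)^{C_p}|\leq -2n+3$), so the map $\uZ\to\uH^{\beta+\lambda_0}_{C_{p^n}}(S^0)$ is onto its torsion-free quotient; since $\uH^{\beta+\lambda_0}_{C_{p^n}}(S^0)=\uH^{\alpha}_{C_{p^n}}(S^0)$ is torsion (Corollary \ref{cortors}), this map is actually surjective, its image being the full $\uH^{\alpha}$, hence $\uH^{\alpha}_{C_{p^n}}(S^0)\cong\uZ/(\text{image of the composite }\uH^{\beta-\lambda_0}\to\uZ)$. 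Identifying that image as $\uB_{\ov n,\emptyset}$'s defining submodule via the $\uZ$-module structure (the restriction maps of $\uH^{\beta-\lambda_0}_{C_{p^n}}(S^0)\cong\Phi^\ast_p(\cdots)$ together with the map to $\uZ=\uH^{\beta}_{C_{p^n}}(S(\lambda_0)_+)$ being multiplication by $p$ at each level, by the explicit formula for $\Phi^\ast_p$ in \eqref{Phi_Comp}) yields $\uH^{\alpha}_{C_{p^n}}(S^0)\cong\uB_{\ov n,\emptyset}\oplus\Phi^\ast_p(\uH^{\alpha^{C_p}}_{C_{p^{n-1}}}(S^0))$, where the $\Phi^\ast_p$-summand persists as the cokernel on the torsion part using that $\Phi^\ast_p$ is exact on torsion $\uZ$-modules (the discussion after \eqref{Phi_Comp}).

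The main obstacle will be pinning down the connecting map $\uZ=\uH^{\beta}_{C_{p^n}}(S(\lambda_0)_+)\to\uH^{\beta+\lambda_0}_{C_{p^n}}(S^0)$ precisely enough to read off $\uB_{\ov n,\emptyset}$ rather than some other $\uB_{\ov n, S}$ — i.e. verifying the connecting homomorphism kills exactly the right $p$-multiples at each level. I would handle this by exploiting the $\uH^\bigstar_{C_{p^n}}(S^0)$-module structure on $\uH^\bigstar_{C_{p^n}}(S(\lambda_0)_+)$ as in Theorem \ref{negfree}: the class in $\uH^{\beta}_{C_{p^n}}(S(\lambda_0)_+)\cong\uZ$ is $\pi^\ast$ of a generator of the free summand of $\uH^{\beta}_{C_{p^n}}(S^0)$, and the connecting map sends it to $a_{\lambda_0}$ times that generator, whose image under the restriction $\res^{C_{p^n}}_{C_{p^k}}$ is governed by \eqref{lambda0coh}-type bookkeeping and the relation $p^{n}a_{\lambda_0}=0$ from \eqref{a-reln}; this forces the cokernel at level $C_{p^n}/C_{p^k}$ to be $\Z/p^{k}$ with the transfers/restrictions of $\uB_{\ov n,\emptyset}$. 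The remaining check that the $\Phi^\ast_p$-summand splits off is a formal consequence of it being a direct summand before applying $a_{\lambda_0}$ and of $a_{\lambda_0}$ acting compatibly with the $\Phi^\ast_p$-decomposition (since $a_{\lambda_0}$ is pulled back from $C_{p^n}/C_p$-equivariant data only in the $\Phi^\ast_p$-part and the free part maps independently), so I expect that step to be routine once the connecting map is understood.
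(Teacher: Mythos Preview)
Your overall plan---reduce to $|\alpha|=2$ via Example~\ref{lambda0}, then feed Theorem~\ref{negfree} into \ref{eq:base-m} at $m=0$---is exactly the paper's route. The problem is a mis-indexing of the long exact sequence that sends you into an unnecessary and ultimately incorrect detour.

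In \ref{eq:base-m} at $m=0$ with index $\beta$, the term immediately preceding $\uH^{\beta}_{C_{p^n}}(S^0)$ is $\uH^{\beta+\lambda_0-1}_{C_{p^n}}(S(\lambda_0)_+)$, not $\uH^{\beta-1}_{C_{p^n}}(S(\lambda_0)_+)$. Since $|\beta+\lambda_0-1|=1$, \eqref{lambda0coh} gives this term as $\uZ^\ast$, not $0$. (This is consistent with Example~\ref{lambda0}: at $|\beta|=0$ the map $\cdot\,a_{\lambda_0}$ is only asserted to be \emph{surjective}, not an isomorphism.) Using that the previous $a_{\lambda_0}$-map $\uH^{\beta-1}_{C_{p^n}}(S^0)\to\uH^{\beta+\lambda_0-1}_{C_{p^n}}(S^0)$ is surjective (Example~\ref{lambda0}, case $|\cdot|=-1$) and that $\uH^{\beta+\lambda_0}_{C_{p^n}}(S(\lambda_0)_+)=0$, you obtain directly the short exact sequence
\[
\uO\longrightarrow\uZ^\ast\longrightarrow\uZ\oplus\Phi^\ast_p\bigl(\uH^{\alpha^{C_p}}_{C_{p^{n-1}}}(S^0)\bigr)\longrightarrow\uH^{\alpha}_{C_{p^n}}(S^0)\longrightarrow\uO.
\]
From here the argument is one line: $\uZ^\ast$ is generated under transfers by $1\in\uZ^\ast(G/e)=\Z$, while the $\Phi^\ast_p$-summand vanishes at $G/e$ (by \eqref{Phi_Comp} it is $\uH^{\alpha^{C_p}}_{C_{p^{n-1}}}(S^0)(C_{p^{n-1}}/e)$, ordinary cohomology in nonzero degree). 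Hence the image of $\uZ^\ast$ lies entirely in the $\uZ$ summand, with the map pinned down by being the identity at $G/e$. The cokernel of $\uZ^\ast\to\uZ$ is $\uB_{\ov{n},\emptyset}$ by \eqref{nz_ext1}, and the $\Phi^\ast_p$ summand passes through untouched.

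Your second attempt inherits a further indexing error: the connecting homomorphism out of $\uH^{\beta}_{C_{p^n}}(S(\lambda_0)_+)\cong\uZ$ lands in $\uH^{\beta-\lambda_0+1}_{C_{p^n}}(S^0)$, not in $\uH^{\beta+\lambda_0}_{C_{p^n}}(S^0)=\uH^{\alpha}_{C_{p^n}}(S^0)$. The five-term sequence you wrote does not exist as stated, and the subsequent analysis of a ``connecting map $\uZ\to\uH^{\alpha}$'' concerns a map that is not present. All of the work in your last two paragraphs---pinning down connecting maps level by level, tracking $p$-multiples, arguing the $\Phi^\ast_p$ piece splits off---evaporates once the correct $\uZ^\ast$ term is in place.
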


\begin{proof}
From Example \ref{lambda0}, it suffices to assume $|\alpha|=2$. We then have the following reduction of ~\myTagFormat{eq:base}{0}
\[
\uO \to \uZ^* \to \uZ \oplus \Phi^\ast_p(\uH^{\alpha^{C_p}}_{C_{p^{n-1}}}(S^0) ) \to \uH^{\alpha}_{C_{p^n}}(S^0) \to \uO.
\]
As $\uZ^\ast$ is generated by transfers on the element $1 \in \Z = \uZ^\ast(C_{p^n}/e)$, the image of $\uZ^* \to \uZ \oplus \Phi^\ast_p(\uH^{\alpha^{C_p}}_{C_{p^{n-1}}}(S^0) )$ sits inside $\uZ$, uniquely defined by the fact that it is an isomorphism at $C_{p^n}/e$. The result follows as the cokernel of $\uZ^\ast \to \uZ$ is $\uB_{\ov{n},\emptyset}$. 
\end{proof}

For the group $C_p$, the equivariant cohomology ring (in grading $n+m\lambda_0$) consists of a polynomial algebra $\Z[u_{\lambda_0}, a_{\lambda_0}]$, plus modules $\Z\{[pu_{\lambda_0}^{-j}]\}$ and $\Z/p\{u_{\lambda_0}^{-j} a_{\lambda_0}^{-k}\}$. The latter part are all $a_{\lambda_0}$-torsion, and the former part gives a region where multiplication by $a_{\lambda_0}$ is injective, but we do not have any $a_{\lambda_0}$-periodic pieces.  For $n\geq 2$, the Mackey functor $ \Phi^\ast_p(\uH^{\alpha^{C_p}}_{C_{p^{n-1}}}(S^0))$ forms a $a_{\lambda_0}$-periodic piece as observed in the proofs of Theorem \ref{negfree} and  Corollary \ref{posevtor}, and in Corollary \ref{zerpos} and Corollary \ref{postor}. Using Lemma \ref{pullbackalpha}, this also constructs $a_{\lambda_i}$-periodic pieces over $C_{p^n}$ whenever $n\geq i+2$. 

\begin{exam}
For the group $C_{p^2}$, an example where $a_{\lambda_0}$-periodicity occurs is in degrees $2\lambda_1 + c \lambda_0$, where the periodic piece is the Mackey functor $\uB_{\{2\},\emptyset}$ which is $\Z/p$ at $C_{p^2}/C_{p^2}$, and $0$ at other levels. The Mackey functor in degree $2\lambda_1 - 2\lambda_0$ is demonstrated in Table \ref{per0}, where the symbols for the generating classes are written alongside. The class $[p^2 u_{\lambda_0}^{-2}]$ is $a_{\lambda_0}$-torsion, and the periodic piece is represented in degree $2\lambda_1$ by $a_{\lambda_0}^2 a_{\lambda_1/\lambda_0}^2 = a_{\lambda_1}^2$. This construction easily generalizes so that over $C_{p^n}$ for $n\geq 2$, the class $a_{\lambda_1}^n$ is $a_{\lambda_0}$-periodic, and for $n\geq i+2$, $a_{\lambda_{i+1}}^{n-i}$ is $a_{\lambda_i}$-periodic.  

\begin{table}[ht]

\begin{tabular}{ |p{4.5cm}|p{1.7cm}|  }
 \hline
{  \text{Mackey functor diagram}} &  { \text{ \;\; \;\; Generating elements}}  \\
 \hline
{\xymatrix{  \Z \oplus \Z/p \ar@/_1pc/[d]_{\small \begin{bmatrix} p & 0\end{bmatrix}}  \\
  \Z \ar@/_1pc/[d]_p \ar@/_1pc/[u]_{\small \begin{bmatrix} 1 \\ 0 \end{bmatrix}} \\
     \Z  \ar@/_1pc/[u]_1} } & { \xymatrix@R=0.70cm{  u_{\lambda_1}^2[p^2u_{\lambda_0}^{-2}], (a_{\lambda_1/\lambda_0})^2 -u_{\lambda_1}^2[p^2u_{\lambda_0}^{-2}] \\
      [pu_{\lambda_0}^{-2}] \\
       1}} 
 \\
 \hline
\end{tabular}
\vspace{.2cm}
\caption{Formula for $\uH_{C_{p^2}}^{2\lambda_1-2\lambda_0} (S^0)$.}
\label{per0}
\end{table}

\end{exam}

Note that $\Ext_L(\uB_{\ov{n}, \emptyset}, \Z) \cong \uB_{\ov{n}, \emptyset}$, hence \eqref{anders-comp} along with Corollary \ref{posevtor} readily implies
\begin{cor}\label{posoddtor}
For an element $\alpha \in RO(C_{p^n})$ satisfying $|\alpha|<0$ odd and $|\alpha^{C_p}| \ge 2n+1$, $\uH^{\alpha}_{C_{p^n}}(S^0) \cong \underline{B}_{\ov{n}, \emptyset} \oplus \Phi^\ast_p(\uH^{\alpha^{C_p}}_{C_{p^{n-1}}}(S^0, \uZ) ).$
\end{cor}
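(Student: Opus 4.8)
The plan is to reduce Corollary~\ref{posoddtor} to the already-established Corollary~\ref{posevtor} via Anderson duality, exactly as the paragraph preceding the statement suggests. First I would observe that since $|\alpha|<0$ is odd, we have $|\alpha|\neq 0$, so \eqref{anders-comp} applies and gives
\[
\uH^\alpha_{C_{p^n}}(S^0) \cong \Ext_L(\uH^{3-\lambda_0-\alpha}_{C_{p^n}}(S^0), \Z).
\]
Set $\gamma = 3-\lambda_0-\alpha$. Then $|\gamma| = 3 - 2 - |\alpha| = 1-|\alpha|$, which is even and, since $|\alpha|\leq -1$, satisfies $|\gamma|\geq 2>0$. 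Moreover $|\gamma^{C_p}| = 3 - 2 - |\alpha^{C_p}| = 1 - |\alpha^{C_p}| \leq 1 - (2n+1) = -2n$, so in particular $|\gamma^{C_p}|\leq -2n+2$. Thus $\gamma$ is exactly of the form covered by Corollary~\ref{posevtor}, which yields
\[
\uH^{\gamma}_{C_{p^n}}(S^0) \cong \uB_{\ov{n},\emptyset}\oplus \Phi^\ast_p(\uH^{\gamma^{C_p}}_{C_{p^{n-1}}}(S^0)).
\]

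Next I would apply $\Ext_L(-,\Z)$ to this decomposition. Since $\Ext_L$ is additive, it suffices to compute it on each summand. For the first summand, the computation $\Ext_L(\uB_{\ov{n},\emptyset},\Z)\cong \uB_{\ov{n},\emptyset}$ recorded just before the statement (a special case of $\Ext_L(\uB_{T,S},\Z)=\uB_{S^c,T^c}$ with $T=\ov{n}$, $S=\emptyset$, using $\ov{n}^c=\emptyset$ and $\emptyset^c = \ov{n}$) handles that term. For the second summand, I would use the remark from Section~\ref{Z-mod} that $\Phi_p^\ast$ commutes with $\Ext_L$ on $\uZ$-modules that vanish at $C_{p^n}/e$ — and $\uH^{\gamma^{C_p}}_{C_{p^{n-1}}}(S^0)$ is such a module since $|\gamma^{C_p}|\neq 0$ forces the value at the bottom level to be zero (by the argument in Corollary~\ref{cortors}). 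Hence
\[
\Ext_L(\Phi_p^\ast \uH^{\gamma^{C_p}}_{C_{p^{n-1}}}(S^0),\Z) \cong \Phi_p^\ast \Ext_L(\uH^{\gamma^{C_p}}_{C_{p^{n-1}}}(S^0),\Z).
\]
Now I apply Anderson duality \eqref{anders-comp} one level down, for the group $C_{p^{n-1}}$: since $|\gamma^{C_p}|\neq 0$, we get $\Ext_L(\uH^{\gamma^{C_p}}_{C_{p^{n-1}}}(S^0),\Z)\cong \uH^{3-\lambda_0-\gamma^{C_p}}_{C_{p^{n-1}}}(S^0)$, and a short index computation shows $3-\lambda_0-\gamma^{C_p} = 3 - \lambda_0 - (3-\lambda_0-\alpha)^{C_p} = \alpha^{C_p}$ (the $\lambda_0$'s cancel and the constants cancel). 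Assembling the pieces gives precisely $\uB_{\ov{n},\emptyset}\oplus \Phi_p^\ast(\uH^{\alpha^{C_p}}_{C_{p^{n-1}}}(S^0))$, as claimed.

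The only genuine subtlety — the step I would be most careful about — is the bookkeeping of gradings and the compatibility of $\Phi_p^\ast$ with $\Ext_L$ and with the fixed-point operation $\alpha\mapsto\alpha^{C_p}$. Specifically one must check that $\gamma^{C_p}$, computed as the $C_p$-fixed part of $3-\lambda_0-\alpha$ inside $RO(C_{p^n})$, matches the grading $3-\lambda_0-\alpha^{C_p}$ used for the $C_{p^{n-1}}\cong C_{p^n}/C_{p^m}$-cohomology after applying $\Phi_p^\ast$ — this uses that $1$ and $\lambda_0$ are $C_p$-fixed so $(3-\lambda_0)^{C_p} = 3-\lambda_0$, together with Lemma~\ref{pullbackalpha}/Proposition~\ref{signs}-style identifications relating $C_{p^n}$-cohomology in pulled-back degrees to $C_{p^{n-1}}$-cohomology. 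Once these identifications are in place the argument is a formal two-step Anderson-duality reduction, with no further input needed beyond Corollary~\ref{posevtor} and the additivity of $\Ext_L$.
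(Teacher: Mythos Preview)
Your approach matches the paper's exactly: Anderson duality \eqref{anders-comp} plus Corollary~\ref{posevtor} plus $\Ext_L(\uB_{\ov{n},\emptyset},\Z)\cong\uB_{\ov{n},\emptyset}$. There is, however, a bookkeeping slip in the step you yourself flagged as delicate. The representation $\lambda_0$ for $C_{p^n}$ is \emph{not} $C_p$-fixed; its $C_p$-fixed points are zero, so $\lambda_0^{C_p}=0$. Hence $|\gamma^{C_p}| = 3-|\alpha^{C_p}|$ (not $1-|\alpha^{C_p}|$, though your conclusion $|\gamma^{C_p}|\le -2n+2$ still holds) and $\gamma^{C_p}=3-\alpha^{C_p}$. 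Consequently the $\lambda_0$'s do \emph{not} cancel in your final index computation: applying \eqref{anders-comp} at level $C_{p^{n-1}}$ gives
\[
\Ext_L\bigl(\uH^{\gamma^{C_p}}_{C_{p^{n-1}}}(S^0),\Z\bigr)\cong \uH^{3-\lambda_0-(3-\alpha^{C_p})}_{C_{p^{n-1}}}(S^0)=\uH^{\alpha^{C_p}-\lambda_0}_{C_{p^{n-1}}}(S^0),
\]
not $\uH^{\alpha^{C_p}}_{C_{p^{n-1}}}(S^0)$.

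The fix is the one the paper uses in the parallel arguments of Proposition~\ref{cohzero}(c) and Corollary~\ref{zerpos}: since $|\alpha^{C_p}|\ge 2n+1\ge 5$, Example~\ref{lambda0} shows that multiplication by $a_{\lambda_0}$ is an isomorphism $\uH^{\alpha^{C_p}-\lambda_0}_{C_{p^{n-1}}}(S^0)\cong\uH^{\alpha^{C_p}}_{C_{p^{n-1}}}(S^0)$. With this correction your argument is complete and coincides with the paper's.
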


\section{Examples of non-trivial extensions}\label{nontriv}
 In this section, we compute the coefficient Mackey functor $\uH^{\bigstar}_{C_{p^2}}(S^0; \uZ)$ completely, and using that we observe non-trivial extensions for the group $C_{p^n}$ when $n\ge 3$. The ring structure of $\uH^\bigstar_{C_{p^2}}(S^0; \uZ)(C_{p^2}/C_{p^2})$ is completely determined in \cite{Zen17} using the Tate square. Our computations follow mainly from the general results of \S \ref{largedimcomp}. 
%
For $C_p$, the $\uZ$-cohomology has the following additive structure \cite[Corollary B.10]{Fer99}

\begin{myeq}\label{cohCp}
\uH^\alpha_{C_p}(S^0;\uZ) \cong \begin{cases}
\uZ  & \mbox{if}~|\alpha|=0, |\alpha^{C_p}|\leq 0 \\
\uZ^\ast  & \mbox{if}~|\alpha|=0, |\alpha^{C_p}|> 0\\
\uB_{\ov{1}, \emptyset}  & \mbox{if}~|\alpha|>0, |\alpha^{C_p}|\leq 0~\mbox{even} \\
\uB_{\ov{1}, \emptyset} & \mbox{if}~|\alpha|<0, |\alpha^{C_p}|\ge 3 ~\mbox{odd} \\
0  &\mbox{otherwise}.
\end{cases}
\end{myeq}

\begin{mysubsection}{The additive structure of $C_{p^2}$ cohomology}
We compute the Mackey functors $\uH^\alpha_{C_{p^2}}(S^0; \uZ)$ for each $\alpha \in RO(C_{p^2})$. The entire computation is summarized in Table \ref{comp-tab}. 
\end{mysubsection}

\begin{table}[ht]

\begin{tabular}{ |p{3.5cm}|p{1.7cm}||p{3.5cm}| p{1.8cm}|  }
 \hline
{ \small $|\alpha|$ \text{ even \& positive}} & { \small  $\uH_{C_{p^2}}^\alpha(S^0)$} &  { \small $|\alpha|=0$ } & { \small $\uH_{C_{p^2}}^\alpha(S^0)$} \\
 \hline
{ \small $ |\alpha^{C_p}| >0 , |\alpha^{C_{p^2}}| >0$}  & { $0$ } 
& { \small $|\alpha^{C_p}| \le 0 , |\alpha^{C_{p^2}}| \le 0$}  & { $\uZ$ }  \\
\hline
{\small $|\alpha^{C_p}| >0,|\alpha^{C_{p^2}}|\le 0$}  &  { $\uB_{\ov{1}^c, \emptyset}$ } &  {\small $|\alpha^{C_p}| >0,|\alpha^{C_{p^2}}|> 0$}  &  { $\uZ^{\ast}$ } \\
\hline
{ \small  $|\alpha^{C_p}|=0,|\alpha^{C_{p^2}}|\le 0$}  &  { $\uB_{\ov{2}, \emptyset}$ }
& { \small  $|\alpha^{C_p}|\ge 4,|\alpha^{C_{p^2}}|\le 0$}  &  { $\uB_{\ov{1}^c, \emptyset}\oplus \uZ^\ast$ } \\
\hline
 { \small  $|\alpha^{C_p}|= 0, |\alpha^{C_{p^2}}| >0$}  &  { $\uB_{\ov{2}, \ov{1}^c}$ } & { \small  $|\alpha^{C_p}|= 2, |\alpha^{C_{p^2}}| \le 0$}  &  { $\uZ_{\ov{1}}$ } \\
 \hline
{ \small  $|\alpha^{C_p}|<0, |\alpha^{C_{p^2}}|>0$}  &  { $\uB_{\ov{2}, \emptyset}$ }
& { \small  $|\alpha^{C_p}|=0, |\alpha^{C_{p^2}}|>0$}  &  { $\uZ_{\ov{1}^c}$ }\\
 \hline
 { \small  $|\alpha^{C_p}|\le 0, |\alpha^{C_{p^2}}|\le 0$}  &  { $\uB_{\ov{2}, \emptyset}$} & { \small  $|\alpha^{C_p}|<0, |\alpha^{C_{p^2}}|> 0$}  &  { $\uZ$ } \\
\Cline{1.3pt}{1-4}
 \hline
 
{ \small $|\alpha|$ \text{ odd \& negative} } & { \small  $\uH_{C_{p^2}}^\alpha(S^0)$} & {\small $|\alpha|$ \text{ even \& negative}  } & { \small $\uH_{C_{p^2}}^\alpha(S^0)$} \\
\hline
{ \small $ |\alpha^{C_p}|\le 1, |\alpha^{C_{p^2}}|\leq 1$}  & { $0$ } 
&{\small $|\alpha^{C_p}| \ge 4,|\alpha^{C_{p^2}}| \le 0$}  &  { $\uB_{\ov{1}^c, \emptyset}$ }  \\
\hline
{\small $|\alpha^{C_p}| \le 1,|\alpha^{C_{p^2}}|> 1$}  &  { $\uB_{\ov{1}^c, \emptyset}$ }  & { \small  otherwise }  &  { 0 } \\
\hline 
\Cline{1pt}{3-4}
{ \small  $|\alpha^{C_p}|=3,|\alpha^{C_{p^2}}|> 1$}  &  { $\uB_{\ov{2}, \emptyset}$ }
&{\small $|\alpha|$ \text{ odd \& positive}  } & { \small $\uH_{C_{p^2}}^\alpha(S^0)$} \\
\hline
{ \small  $|\alpha^{C_p}|= 3, |\alpha^{C_{p^2}}| \le 1$}  &  { $\uB_{\ov{1}, \emptyset}$ } & {\small $|\alpha^{C_p}|\le -1, |\alpha^{C_{p^2}}| >1$}  & { $\uB_{\ov{1}^c, \emptyset}$ }  \\
\hline
{ \small  $|\alpha^{C_p}|\ge 5$}  &  { $\uB_{\ov{2}, \emptyset}$ }
& {\small otherwise}  & $0$ \\
\hline
\end{tabular}
\vspace{.2cm}
\caption{Formula for $\uH_{C_{p^2}}^\alpha(S^0;\uZ)$.}
\label{comp-tab}
\end{table}

\begin{theorem}\label{Cp2-comp}
The Mackey functors $\uH_{C_{p^2}}^\alpha(S^0;\uZ)$ are as demonstrated in Table \ref{comp-tab}.
\end{theorem}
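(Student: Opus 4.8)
The plan is to treat a general grading as $\alpha = c + a_0\lambda_0 + a_1\lambda_1$ (the invertible classes in degrees $\lambda(rp^k)-\lambda(p^k)$, $p\nmid r$, reduce to this case) and to sort the computation by the three integers $|\alpha|=|\alpha^e|$, $|\alpha^{C_p}|$, and $|\alpha^{C_{p^2}}|=c$. For $C_{p^2}$ the ``large $C_p$-fixed point'' hypothesis of Section \ref{largedimcomp} is $|\alpha^{C_p}|\le -2$ or $|\alpha^{C_p}|\ge 4$, and in that range Table \ref{comp-highfix}, Theorem \ref{negfree}, Corollaries \ref{negtor}, \ref{zerpos}, \ref{postor}, \ref{posevtor}, \ref{posoddtor}, together with Propositions \ref{comp_1}, \ref{cohzero} and Corollary \ref{comp_2} for the borderline signs, express $\uH^\alpha_{C_{p^2}}(S^0)$ in terms of $\Phi^\ast_p(\uH^{\alpha^{C_p}}_{C_p}(S^0))$, possibly plus a $\uZ$, $\uZ^\ast$, or $\uB_{\ov{2},\emptyset}$ summand. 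Substituting the known $C_p$-cohomology \eqref{cohCp} and using $\Phi^\ast_p\uZ\cong\uZ$, $\Phi^\ast_p\uZ^\ast\cong\uZ_{\ov{1}^c}$, and $\Phi^\ast_p\uB_{\ov{1},\emptyset}\cong\uB_{\ov{1}^c,\emptyset}$ then reproduces the corresponding rows of Table \ref{comp-tab}; gradings pulled back from $C_{p^2}/C_p$ and the vanishing ranges are covered by Proposition \ref{signs} and Proposition \ref{cohzero}.

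The remaining work is the ``middle'' range $|\alpha^{C_p}|\in\{-1,0,1,2,3\}$. Here I would first invoke Anderson duality \eqref{anders-comp}, which for $|\alpha|\ne 0$ gives $\uH^\alpha_{C_{p^2}}(S^0)\cong \Ext_L(\uH^{3-\lambda_0-\alpha}_{C_{p^2}}(S^0),\Z)$, replacing $|\alpha^{C_p}|$ by $3-|\alpha^{C_p}|$ and $|\alpha^{C_{p^2}}|$ by $3-|\alpha^{C_{p^2}}|$, and combine it with the identity $\Ext_L(\uB_{T,S},\Z)\cong\uB_{S^c,T^c}$ from Section \ref{Z-mod}; this roughly halves the number of genuinely independent cases. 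The key move for actually leaving the middle range is multiplication by $a_{\lambda_1}$: the exact sequence ~\myTagFormat{eq:base}{1} relates $\uH^\alpha_{C_{p^2}}(S^0)$ to $\uH^{\alpha+\lambda_1}_{C_{p^2}}(S^0)$ and $\uH^{\alpha-\lambda_1}_{C_{p^2}}(S^0)$, whose $C_p$-fixed dimension differs by $2$ from $|\alpha^{C_p}|$, so after at most one step one lands in the range already computed, while the error terms $\uH^{\bullet}_{C_{p^2}}(S(\lambda_1)_+)$ are given by Proposition \ref{sph_coh} with $m=1$ (built from \eqref{cohCp} via $\Theta_1$ and $\Theta^\ast_1$). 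Interleaving this with the $a_{\lambda_0}$-sequence ~\myTagFormat{eq:base}{0} (whose error terms are \eqref{lambda0coh}, so $a_{\lambda_0}$ is an isomorphism for $|\alpha|\notin\{-2,-1,0\}$) propagates everything to arbitrary $|\alpha|$. The base case $\alpha\in RO_0(C_{p^2})$ with $|\alpha^{C_p}|=0$ is immediate (pulled back, hence $\uZ$, when $|\alpha^{C_{p^2}}|\le 0$, and $\uZ_{\ov{1}^c}$ by Proposition \ref{comp_1} when $|\alpha^{C_{p^2}}|>0$), so only $|\alpha^{C_p}|=2$ needs to be pinned down directly from ~\myTagFormat{eq:base}{1}.

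The main obstacle will be resolving the extension problems produced by these long exact sequences. The non-split $\uZ$-module $\uT(2)$ defined just before this section (and its $\Phi^\ast_p$-shifts) genuinely occurs among the intermediate objects, for instance as $\uH^{\bullet}_{C_{p^2}}(S(\lambda_1)_+)$, so one must follow the connecting maps carefully enough to see that the \emph{final} Mackey functors $\uH^\alpha_{C_{p^2}}(S^0)$ in Table \ref{comp-tab} are always direct sums of modules of the form $\uZ_T$ and $\uB_{T,S}$. The tools are: Proposition \ref{zb}, which identifies a torsion-free $\uZ$-module extension of $\uB_{S,T}$ by $\uZ_S$ with $\uZ_T$ --- torsion-freeness being checked each time via injectivity of $\uH^\alpha_{C_{p^2}}(S^0)\to\uH^\alpha_{C_{p^2}}(S(\lambda_0)_+)\cong\uZ$ coming from the $a_{\lambda_0}$-sequence and Proposition \ref{signs}; Proposition \ref{Bker}, which computes kernels and cokernels of the maps between $\uB$-type Mackey functors that arise; and the $\uH^{\bullet}_{C_{p^2}}(S^0)$-module-structure splitting of Theorem \ref{negfree} (the section of $\pi^\ast\colon\uH^{\bullet}_{C_{p^2}}(S^0)\to\uH^{\bullet}_{C_{p^2}}(S(\lambda_0)_+)$), which both produces the $a_{\lambda_0}$-periodic summands $\Phi^\ast_p(\uH^{\alpha^{C_p}}_{C_p}(S^0))$ and forces the remaining sequences to split. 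Assembling the cases, and cross-checking the $C_{p^2}/C_{p^2}$-level against the ring computation of \cite{Zen17}, then yields Table \ref{comp-tab}.
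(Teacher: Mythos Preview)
Your plan is correct but works harder than necessary, and one of your side claims is inaccurate.

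The paper's argument never invokes the $a_{\lambda_1}$-sequence~\myTagFormat{eq:base}{1} at all. For $n=2$ the ``middle range'' is effectively just $|\alpha^{C_p}|\in\{0,2\}$ (odd values of $|\alpha^{C_p}|$ force $|\alpha|$ odd, and those are already covered by Proposition~\ref{cohzero} and Corollary~\ref{negtor}), and these two cases are handled directly by the Section~\ref{largedimcomp} results themselves: Proposition~\ref{comp_1} and Corollary~\ref{comp_2} already reach down to $|\alpha^{C_p}|=0$ (and up to $|\alpha^{C_p}|=2$) when $|\alpha^{C_{p^2}}|>0$, and Anderson duality~\eqref{and_comp} then dualizes to $|\alpha^{C_{p^2}}|\le 0$. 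The only case that truly needs an explicit long exact sequence is $|\alpha|>0$ even, $|\alpha^{C_p}|=0$, $|\alpha^{C_{p^2}}|\le 0$, and for that a single application of~\myTagFormat{eq:base}{0} (cokernel of $\uZ^\ast\hookrightarrow\uZ$) suffices. So your detour through $a_{\lambda_1}$, while it does work (your computation with Proposition~\ref{zb} to identify the extension $\uZ^\ast\to\,?\to\uB_{\ov{1}^c,\emptyset}$ as $\uZ_{\ov{1}}$ is fine), is avoidable.

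Your claim that $\uT(2)$ ``genuinely occurs among the intermediate objects, for instance as $\uH^\bullet_{C_{p^2}}(S(\lambda_1)_+)$'' is not right. In the relevant degree ($|\alpha|=0$, $|\alpha^{C_p}|=2n-2=2$) one has $\uH^{\downarrow\alpha-1}_{C_p}(S^0)=0$ by~\eqref{cohCp} (since $|\cdot^{C_p}|=2n-3=1$), so the short exact sequence of Proposition~\ref{sph_coh} collapses and $\uH^\alpha_{C_{p^2}}(S(\lambda_1)_+)\cong\Theta_1(\uZ^\ast)\cong\uZ_{\ov{1}}$, not $\uT(2)$. The non-split extension~\eqref{nonsplit} only appears for $n\ge 3$, precisely because one needs $2n-3\ge 3$ to get the $\uB_{\ov{1},\emptyset}$ contribution on the left; this is exactly the point of the next subsection of the paper. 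So your worry about resolving $\uT(2)$-type extensions in the $C_{p^2}$ table is a phantom.
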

\begin{proof}
The starting point is the application Proposition \ref{signs} to  \eqref{cohCp}, which gives the result whenever $|\alpha|$ and $|\alpha^{C_p}|$ are either both $0$, or both have the same sign. The remaining cases with $|\alpha|=0$ follow from Proposition \ref{comp_1}, Theorem \ref{negfree}, and Corollary \ref{zerpos}. For $|\alpha|>0$ odd, apart from the above, the remaining follow from Corollary \ref{negtor}. Applying Anderson duality \eqref{anders-comp} to the cases with $|\alpha|>0$ odd, we obtain the computations for $|\alpha|<0$ even.     

We next consider the case $|\alpha|>0$ even. Summarizing the results from \S \ref{largedimcomp}, we note that other than $|\alpha^{C_p}|=0$, $|\alpha^{C_{p^2}}|\leq 0$, the results follow from Proposition \ref{signs}, Corollary \ref{comp_2} and Corollary \ref{posevtor}. Using the calculations in Example \ref{lambda0},  it suffices to consider $|\alpha|=2$, in which case we already have computed the cohomology at the grading $\alpha-\lambda_0$ to be $\uZ$ \eqref{comp-tab}. The short exact sequence
 \[
 \uO \to \uZ^\ast \to  \uH^{\alpha-\lambda_0}_{C_{p^2}}(S^0; \uZ) \to  \uH^{\alpha}_{C_{p^2}}(S^0; \uZ) \to \uO. 
 \]
computes  $\uH^\alpha_{C_{p^2}}(S^0)\cong \uB_{\ov{2}, \emptyset}$.  Finally for $|\alpha|<0$ odd, the result follows from Anderson duality \eqref{anders-comp}.
\end{proof}

\begin{mysubsection}{Examples of non-trivial extensions}
We now point out cohomology computations where the Mackey functors which arise are not a direct sum of copies of $\uZ_T$ and $\uB_{T,S}$. 
%
%
We start by assuming $n\ge 3$ and  $\alpha \in RO_0(C_{p^n})$ satisfying $|\alpha^{C_p}|=2n-2$ and $|\alpha^H|\le 0$ for all $H \neq e, C_p.$ By Proposition \ref{sph_coh}, there is a short exact sequence
\begin{myeq}\label{nonsplit}
\uO \to \uB_{\ov{n},\ov{1}^c} \cong \Theta^\ast_1(\uB_{\ov{1}}) \to \uH^\alpha_{C_{p^n}}(S(\lambda_1)_+; \uZ) \to \Theta_1(\uZ_{\ov{1}}) \cong \uZ_{\ov{1}}\to \uO
\end{myeq}
in $\Mack_{C_{p^n}}$. At each level $C_{p^n}/H$, the short exact sequence \eqref{nonsplit} splits as the right end is free. We note that there does not exists any splitting in $\Mack_{C_{p^n}}$. For if it did, then $\downarrow^{p^n}_{p^2} \uH^\alpha_{C_{p^n}}(S(\lambda_1)_+; \uZ) \cong  \uB_{\ov{2},\ov{1}^c} \oplus \uZ_{\ov{1}}$. Now we may apply ~\myTagFormat{eq:base}{1} substituting the values from Table \ref{comp-tab} to obtain the following  exact sequence
\[
\uO \to \uB_{\ov{1}^c,\emptyset} \to  \uZ^\ast \oplus \uB_{\ov{1}^c, \emptyset }\to \uB_{\ov{2},\ov{1}^c} \oplus \uZ_{\ov{1}} \to \uH^{\alpha-\lambda_1+1}_{C_{p^2}}(S^0)\to \uO.
\]
The restriction $\res^{C_{p^2}}_{C_p}$ in $\uB_{\ov{2},\ov{1}^c}$ is $0$ and  in $\uZ_{\ov{1}}$ is divisible by $p$, on the other hand, the restriction in $\uH^{\alpha-\lambda_1+1}_{C_{p^2}}(S^0)\cong \uB_{\ov{2},\emptyset}$ (Table \ref{comp-tab}) is the usual quotient $\Z/p^2 \to \Z/p$. Thus, the short exact sequence \eqref{nonsplit} does not split in $\Mack_{C_{p^n}}.$ 

Assuming \eqref{nonsplit} does not split, a direct computation implies that up to isomorphism, $\uH^\alpha_{C_{p^n}}(S(\lambda_1)_+)$ has only one choice which we call $ \uT(n)$. The groups are given by 
\[
\uT(n)(C_{p^n}/H) \cong \begin{cases} \Z \oplus \Z/p & \mbox{if } H\neq e \\ 
                                                              \Z  & \mbox{if } H=e, \end{cases} 
\]
 and the restrictions and transfers are given by
\[\res^{C_{p^i}}_{C_{p^{i-1}}}= \begin{cases} \begin{pmatrix} 1 & 0 \\ 0 & 0 \end{pmatrix} & \mbox{ for } 2\le i \le n \\  \begin{pmatrix} p & 0\end{pmatrix} & \mbox{ for } i=1. \end{cases}  \text{ and } \tr^{C_{p^i}}_{C_{p^{i-1}}}= \begin{cases} \begin{pmatrix} p & 0 \\ 0 & 1 \end{pmatrix} & \mbox{ for } 2\le i \le n \\  \begin{pmatrix} 1\\ -1\end{pmatrix} & \mbox{ for } i=1. \end{cases} 
\]

Now, we restrict out attention to the group $C_{p^3}$ and $\alpha \in RO_0(C_{p^3})$ satisfying $|\alpha^{C_p}|=4$, $|\alpha^{C_{p^2}}|\le 0$, and $|\alpha^{C_{p^3}}|\le 0$. Consider the long exact sequence 
\begin{myeq}\label{p-seq3}
\cdots \uH^{\alpha-\lambda_1}_{C_{p^3}}(S^0) \to \uH^{\alpha}_{C_{p^3}}(S^0) \to \uH^{\alpha}_{C_{p^3}}(S(\lambda_1)_+) \to \uH^{\alpha-\lambda_1+1}_{C_{p^3}}(S^0) \to \uH^{\alpha+1}_{C_{p^3}}(S^0)\cdots
\end{myeq}
From \eqref{anders-comp} and Proposition \ref{cohzero} (a), we get $\uH^{\alpha-\lambda_1}_{C_{p^3}}(S^0)\cong \uO.$ Using \eqref{anders-comp}, we obtain $\uH^{\alpha-\lambda_1+1}_{C_{p^3}}(S^0) \cong \Ext_L(\uH^{2-\lambda_0+\lambda_1-\alpha}_{C_{p^3}}(S^0), \Z)$, which is $\uB_{\ov{3},\ov{1}^c}^E \cong \uB_{\ov{1}, \emptyset}$ by Corollary \ref{comp_2}. Finally, $\uH^{\alpha+1}_{C_{p^3}}(S^0)=0$ by Proposition \ref{cohzero} (b). Thus \eqref{p-seq3} reduces to the short exact sequence
\[
\uO \to \uH^{\alpha}_{C_{p^3}}(S^0) \to \uT(3) \to \uB_{\ov{1}, \emptyset} \to \uO.
\]
A direct computation of the kernel of the map $\uT(3) \to \uB_{\ov{1}, \emptyset}$ gives 
\[
\xymatrix{\Z \oplus \Z/p \ar@/_1pc/[d]_{\small \begin{bmatrix} 1 & 0\\ 0 & 0 \end{bmatrix}} \\
 \Z \oplus \Z/p \ar@/_1pc/[d]_{\small \begin{bmatrix} p & 0\end{bmatrix}} \ar@/_1pc/[u]_{\small \begin{bmatrix} p & 0\\ 0 & 1 \end{bmatrix}} \\
  \Z \ar@/_1pc/[d]_p \ar@/_1pc/[u]_{\small \begin{bmatrix} 1 \\ -1 \end{bmatrix}} \\
   \Z  \ar@/_1pc/[u]_1}
\]
Applying a change of basis $\epsilon_1 \mapsto \epsilon_1-\epsilon _2$ and $\epsilon_2\mapsto \epsilon_2$ at $C_{p^3}/C_{p^2}$, we may rewrite the Mackey functor above as 
\[
\xymatrix{& \Z \oplus \Z/p \ar@/_1pc/[d]_{\small \begin{bmatrix} 1 & 0\\ 1 & 0 \end{bmatrix}} \\ 
\uH^{\alpha}_{C_{p^3}}(S^0): & \Z \oplus \Z/p \ar@/_1pc/[d]_{\small \begin{bmatrix} p & 0\end{bmatrix}} \ar@/_1pc/[u]_{\small \begin{bmatrix} p & 0\\ -1 & 1 \end{bmatrix}} \\
 & \Z \ar@/_1pc/[d]_p \ar@/_1pc/[u]_{\small \begin{bmatrix} 1 \\ 0 \end{bmatrix}} \\ &  \Z  \ar@/_1pc/[u]_1}
\]
Note that $\downarrow^{p^3}_{p^2}\uH^{\alpha}_{C_{p^3}}(S^0) \cong \uZ^\ast \oplus \uB_{\ov{1}^c, \emptyset}$ as in Table \ref{comp-tab}. We now look at \eqref{p-seq3}  
\[ 
\uO \to \uH^{\alpha+\lambda_0-1}_{C_{p^3}}(S(\lambda_0)_+)  \to  \uH^{\alpha}_{C_{p^3}}(S^0) \to \uH^{\alpha+\lambda_0}_{C_{p^3}}(S^0) \to \uH^{\alpha+\lambda_0}_{C_{p^3}}(S(\lambda_0)_+) \cong \uO.
\]
and put in the values to get 
\[
\xymatrix{&  \Z \ar@/_/[d]_p \ar[rrr]^{\small \begin{bmatrix} p \\ -1 \end{bmatrix}}&& & \Z \oplus \Z/p \ar@/_1pc/[d]_{\small \begin{bmatrix} 1 & 0\\ 1 & 0 \end{bmatrix}} \ar[rrr] &&& \Z/p^2  \ar@/_1pc/[d]_{1}\\ 
\uO \ar[r] & \Z \ar@/_/[d]_p \ar@/_/[u]_1  \ar[rrr]^(0.3){\small \begin{bmatrix} 1 \\ 0 \end{bmatrix}} &  &&  \Z \oplus \Z/p \ar@/_1pc/[d]_{\small \begin{bmatrix} p & 0\end{bmatrix}} \ar@/_1pc/[u]_{\small \begin{bmatrix} p & 0\\ -1 & 1 \end{bmatrix}}   \ar[rrr]&&& \Z/p \ar@/_1pc/[u]_{p} \ar@/_/[d] \ar[r] & \uO,\\  
& \Z \ar@/_/[d]_p\ar@/_/[u]_1 \ar[rrr]^{1} &&&   \Z \ar@/_/[d]_p \ar@/_1pc/[u]_{\small \begin{bmatrix} 1 \\ 0 \end{bmatrix}} \ar[rrr] &&& 0 \ar@/_/[d] \ar@/_/[u] \\ 
& \Z \ar@/_/[u]_1 \ar[rrr]^{1} &&& \Z  \ar@/_/[u]_1 \ar[rrr] &&& 0 \ar@/_/[u]} 
 \]
 a non-trivial extension. One may compute and check that $\uH^\alpha_{C_{p^3}}(S^0)$ does not split as a direct sum of Mackey functors of the type $\uZ_T$ and $\uB_{T,S}$. 
 
\begin{table}[ht]

\begin{tabular}{ |p{4.5cm}|p{1.7cm}|  }
 \hline
{  \text{ Mackey functor diagram}} &  { \text{     \;\; \;\;   Generating elements}}  \\
 \hline
{\xymatrix{ \Z \oplus \Z/p \ar@/_1pc/[d]_{\small \begin{bmatrix} 1 & 0\\ 1 & 0 \end{bmatrix}} \\
  \Z \oplus \Z/p \ar@/_1pc/[d]_{\small \begin{bmatrix} p & 0\end{bmatrix}} \ar@/_1pc/[u]_{\small \begin{bmatrix} p & 0\\ -1 & 1 \end{bmatrix}} \\  \Z \ar@/_1pc/[d]_p \ar@/_1pc/[u]_{\small \begin{bmatrix} 1 \\ 0 \end{bmatrix}} \\
     \Z  \ar@/_1pc/[u]_1} } & { \xymatrix@R=.65cm{ (a_{\lambda_1/\lambda_0})^2, p.(a_{\lambda_1/\lambda_0})^2 -u_{\lambda_1}^2[p^3u_{\lambda_0}^{-2}]\\ 
      u_{\lambda_1}^2[p^2u_{\lambda_0}^{-2}], (a_{\lambda_1/\lambda_0})^2 -u_{\lambda_1}^2[p^2u_{\lambda_0}^{-2}] \\
       [pu_{\lambda_0}^{-2}] \\ 
       1}} 
 \\
 \hline
\end{tabular}
\vspace{.2cm}
\caption{Formula for $\uH_{C_{p^3}}^{2\lambda_1 - 2 \lambda_0} (S^0)$.}
\label{comp-cp3}
\end{table}

\begin{exam}
An example of $\alpha$ as the above is $2\lambda_1 - 2\lambda_0$. The  diagram (Table \eqref{comp-cp3}) compares the Mackey functor diagram with the corresponding generating classes.

\end{exam}

\end{mysubsection}

\end{document}